\documentclass[11pt,oneside,a4paper]{article}
\usepackage[left=2.5cm,right=2.5cm,top=2.7cm,bottom=2.7cm]{geometry}

\usepackage{mathptmx}
\usepackage{times}
\usepackage{amsmath}
\usepackage{amsthm}
\usepackage{amssymb}
\usepackage[all]{xy}
\usepackage[latin1]{inputenc}
\usepackage{graphicx}
\usepackage[usenames,dvipsnames]{color}
\usepackage{verbatim}
\usepackage{flafter}
\usepackage[pdffitwindow=false, pdfview=FitH,pdfstartview=FitH,plainpages=false]{hyperref}
\usepackage{hyperref}
\usepackage{tocloft}
\usepackage{multirow}

\DeclareFontFamily{U}{wncy}{}
\DeclareFontShape{U}{wncy}{m}{n}{<->wncyr10}{}
\DeclareSymbolFont{mcy}{U}{wncy}{m}{n}
\DeclareMathSymbol{\Sh}{\mathord}{mcy}{"58}

\newtheorem{theorem}{Theorem}[section]
\newtheorem{lemma}[theorem]{Lemma}
\newtheorem{corollary}[theorem]{Corollary}

\newtheorem{proposition}[theorem]{Proposition}
\theoremstyle{definition}

\newtheorem{definition}[theorem]{Definition}

\newtheorem{remark}[theorem]{Remark}

\newcommand{\Sel}{\mathrm{Sel}}
\newcommand{\CH}{\mathrm{CH}}
\renewcommand{\H}{\mathrm{H}}
\newcommand{\CM}{\mathrm{CM}}
\newcommand{\NS}{\mathrm{NS}}

\newcommand{\rank}{\mathrm{rank}}
\newcommand{\Gal}{\mathrm{Gal}}

\newcommand{\Pic}{\mathrm{Pic}}

\newcommand{\ord}{\mathrm{ord}}
\newcommand{\cor}{\mathrm{cor}} 
\newcommand{\Corr}{\mathrm{Corr}}
\newcommand{\Tr}{\mathrm{Tr}} 
\newcommand{\Hom}{\mathrm{Hom}} 
\newcommand{\Frob}{\mathrm{Frob}}

\newcommand{\End}{\mathrm{End}}

\newcommand{\GL}{\mathrm{GL}}
\newcommand{\Sym}{\mathrm{Sym}}
\newcommand{\Spec}{\mathrm{Spec}}

\newcommand{\res}{\mathrm{res}}
\newcommand{\et}{\mathrm{et}}
\newcommand{\ur}{\mathrm{ur}}

\renewcommand{\exp}{\mathrm{exp}}

\newcommand{\QQ}{\mathbb{Q}}
\newcommand{\RR}{\mathbb{R}}
\newcommand{\CC}{\mathbb{C}}
\newcommand{\ZZ}{\mathbb{Z}}

\newcommand{\OO}{\mathcal{O}}

\newcommand{\M}{\mathrm{M}}
\newcommand{\SL}{\mathrm{SL}}
\newcommand{\SO}{\mathrm{SO}}
\newcommand{\PGL}{\mathrm{PGL}}

\begin{document}

\title{CM cycles on Kuga--Sato varieties over Shimura curves and Selmer groups}
\maketitle
\begin{center}
\author{\textbf{Yara Elias and Carlos de Vera-Piquero}} \\
\textit{Max-Planck Institute for Mathematics, Bonn, Germany\\
\emph{elias@mpim-bonn.mpg.de}}\\
\textit{Universit\"at Duisburg-Essen, Essen, Germany\\
\emph{carlos.de-vera-piquero@uni-due.de}}\\
\end{center}

\begin{abstract}
Given a modular form $f$ of even weight larger than two and an imaginary quadratic field $K$ satisfying a relaxed Heegner hypothesis, we construct a collection of CM cycles on a Kuga--Sato variety over a suitable Shimura curve which gives rise to a system of Galois cohomology classes attached to $f$ enjoying the compatibility properties of an Euler system. Then we use Kolyvagin's method \cite{kolyvagin1990grothendieck}, as adapted by Nekov{\'a}{\v{r}} \cite{nekovar1992kolyvagin} to higher weight modular forms, to bound the size of the relevant Selmer group associated to $f$ and $K$ and prove the finiteness of the (primary part) of the Shafarevich--Tate group, provided that a suitable cohomology class does not vanish.
\end{abstract}

\setcounter{tocdepth}{1}

\tableofcontents

\section{Introduction}\label{intro}

Given a modular form $f$ of even weight, one strives to relate certain algebraic and analytic invariants associated with $f$. The classical expected relations correspond to conjectures formulated by Beilinson, Bloch and Kato, while their $p$-adic analogues were predicted by Perrin-Riou. Several results provide nowadays evidence towards these conjectures, and in most of them the theory of complex multiplication, giving rise to Heegner points or cycles, plays a prominent role.

The algebraic invariants alluded to above are usually related to bounds for the Selmer group associated with (the Galois representation attached to) $f$, while the analytic ones are concerned with the order of vanishing of the (complex or $p$-adic) $L$-series associated with $f$, or with its special values or its derivatives. Contributions to conjectures of this flavour frequently use appropriate special cycles as a bridge between the algebraic and analytic invariants. 

In this note, we extend Kolyvagin's method of Euler systems \cite{kolyvagin1990grothendieck} adapted by Nekov{\'a}{\v{r}} for modular forms of higher even weight \cite{nekovar1992kolyvagin} to the setting where the Heegner hypothesis is relaxed. We exploit Kuga--Sato varieties over Shimura curves in order to construct a Heegner system, that is, a collection of algebraic cycles satisfying certain local and global norm compatibility properties, from which one can extract arithmetic information about the Selmer group.

In order to fit our contribution into the above framework, let us first recall briefly some previous results that have a clear influence in the present work. The simplest scenario in which the above conjectures have been explored is of course the most down-to-earth setting of elliptic curves, or more generally of modular forms of weight 2. In this case, Kolyvagin \cite{kolyvagin1990grothendieck, gross1991} showed how to bound the Selmer group by exploiting the properties of a system of cohomology classes arising from Heegner points on the relevant modular curve (today commonly referred to as an Euler system of Kolyvagin type). Combined with the Gross--Zagier formula \cite{gross1986heegner}, relating the first derivative of the classical $L$-function associated with $f$ to the height of an appropriate Heegner point, and together with analytic non-vanishing results of Murty--Murty \cite{murty1991mean}, the Birch and Swinnerton-Dyer conjecture over $\QQ$ for elliptic curves of analytic rank at most 1 was established. On the $p$-adic side, an analogue of the Gross--Zagier formula was established by Perrin-Riou \cite{perrinriou1987points}. 

For modular forms of higher (even) weight, Kolyvagin's method was carefully extended by Nekov{\'a}{\v{r}} in \cite{nekovar1992kolyvagin}, by replacing the usual Selmer group of an elliptic curve with its cohomological higher weight analogue and the use of Heegner points on modular curves by the so-called Heegner cycles on suitable Kuga--Sato varieties, whose middle cohomology contain the Galois representations associated with higher weight modular forms. A Gross--Zagier formula, due to Zhang \cite{zhang1997heights}, holds also in this setting, and Nekov{\'a}{\v{r}} \cite{nekovavr1995thep} proved a $p$-adic avatar of this result. Combined with results of Bump, Friedburg and Hoffstein \cite{bump1990nonvanishing}, this provides further grounds for the conjectures for analytic rank less than or equal to 1. Still in the higher weight case, but in a different direction, Shnidman \cite{Shnidman2014padic} has recently developed classical and $p$-adic Gross--Zagier formulas for twists of modular forms by algebraic Hecke characters, while the first author \cite{Elias2014Selmer} has explored Kolyvagin's method to bound the size of the Selmer group also in this twisted situation. 

A key element in all the above works is the Heegner hypothesis that allows for the existence of Heegner points on the relevant modular curves (and hence, in the higher weight setting, of Heegner cycles on the relevant Kuga--Sato varieties). When this Heegner hypothesis fails, one can still use Shimura curves to provide a larger supply of modular parametrizations under a more relaxed assumption. In the case of elliptic curves, for instance, Heegner points arising from Shimura curve parametrizations give rise to algebraic points which could not be obtained by using modular curve parametrizations (see \cite[Chapter~4]{Darmon2004rational}, e.g.). 

In the Shimura curve setting, the above picture has been successfully adapted in the weight 2 case. Namely, Kolyvagin's method has been generalized to Hilbert modular forms (of parallel weight 2) over totally real fields by Nekov{\'a}{\v{r}} \cite{nekovar2007euler}, and X. Yuan, S.-W. Zhang and W. Zhang \cite{yuan2013gross} have proved a complete Gross--Zagier formula on quaternionic Shimura curves over totally real fields, building on previous work of S.-W. Zhang \cite{zhang2001heights}. On the $p$-adic side, it is worth mentioning that Disegni \cite{disegni2013padic} has recently proved a $p$-adic Gross--Zagier formula in this setting relating the central derivative of the $p$-adic Rankin--Selberg $L$-series associated with the modular form $f$ and the relevant CM extension to the $p$-adic height of a Heegner point on the abelian variety associated with $f$.

\vspace{0.2cm}

Next we describe the main result of this note. To do so, consider a newform $f_{\infty} \in S_{2r+2}^{\mathrm{new}}(\Gamma_0(N))$ of weight $2r+2\geq 4$ and level $\Gamma_0(N)$. Let $p$ be an odd prime not dividing $N\cdot(2r)!$, and let $\wp$ be a prime ideal dividing $p$ in the number field $F$ generated by the Fourier coefficients of $f_{\infty}$. The Galois representation $V_{\wp}(f_{\infty})$ attached to $f_{\infty}$ (a 2-dimensional $F_{\wp}$-vector space) might be realized as a factor in the middle \'etale cohomology of a (suitably compactified) Kuga--Sato variety over the modular curve $X_0(N)$ (see \cite{scholl1990motives}). Alternatively, we can also realize $V_{\wp}(f_{\infty})$ as a factor in the middle \'etale cohomology of Kuga--Sato varieties over certain Shimura curves, following an approach as in Besser \cite{BesserCycles} and Iovita--Spie{\ss} \cite{IovitaSpiess}.

More precisely, let $N=N^+N^-$ be any factorization of $N$ as a product of relatively prime integers $N^+$, $N^-$ such that $N^-$ is the square-free product of an even number of primes, and consider the Shimura curve $X$ attached to an Eichler order of level $N^+$ in an indefinite quaternion algebra of discriminant $N^-$. The Jacquet--Langlands correspondence associates to $f_{\infty}$ a Hecke eigenform $f$ on $X$, whose Galois representation $V_{\wp}(f)$, isomorphic to $V_{\wp}(f_{\infty})$, arises as a factor in the middle \'etale cohomology of the $r$-th Kuga--Sato variety $\mathcal A^r$ over the Shimura curve $X$ (see Section \ref{sec:repns} for details).

For a number field $K$, let $\CH^{r+1}(\mathcal A^r/K)$ be the $(r+1)$-th Chow group of $\mathcal A^r$ over $K$. The Abel--Jacobi map induces a Hecke- and Galois-equivariant map
\[
\Phi_{f,K}: \CH^{r+1}(\mathcal A^r/K)_0 \otimes F_{\wp} \, \, \longrightarrow \, \, \H^1(K,V_{\wp}(f)),
\]
where the subscript $0$ indicates the subgroup of cycle classes which are homologically trivial, and on the target we consider continuous Galois cohomology (cf. Section \ref{sec:AJmap}). In this note, we focus our attention on the above map when $K$ is an imaginary quadratic field satisfying the relaxed Heegner hypothesis (Heeg) spelled out in Section \ref{sec:CMcycles}. Namely, we require that one can choose the factorization $N=N^+N^-$ as above so that every prime dividing $N^+$ (resp. $N^-$) splits (resp. is inert) in $K$.

In this situation, complex multiplication points on the Shimura curve $X$ give rise to a system of cycles in $\mathcal A^r$ algebraic over ring class fields of $K$ (cf. Section \ref{sec:CMcycles}), leading to a system of (Kolyvagin) cohomology classes in $\H^1(K,V_{\wp}(f))$. The construction of such cycles resembles the construction in \cite{schoen1993complex}, the difference being that here we must construct them on QM abelian surfaces. The bottom layer of this system of algebraic cycles arises in the work of Iovita--Spie{\ss} \cite{IovitaSpiess}, who obtain a $p$-adic Gross--Zagier formula when $p$ divides $N$, and Besser \cite{BesserCycles}, who shows that the $r$-th Griffiths group of $\mathcal A^r$ has infinite rank. Besides, the image of the above map $\Phi_{f,K}$ is contained in the Selmer group $\Sel_{\wp}(f,K) \subseteq \H^1(K,V_{\wp}(f))$ (cf. Section \ref{sec:Selmer}), and the collection of algebraic cycles alluded to before gives us a cycle $y \in \CH^{r+1}(\mathcal A^r/K)_0$ whose image $y_0 = \Phi_{f,K}(y) \in \Sel_{\wp}(f,K)$ under $\Phi_{f,K}$ lies in the ($-\varepsilon$)-eigenspace under the action of complex conjugation, where $\varepsilon$ stands for the sign in the functional equation for the $L$-series associated with $f$. Further, it plays a central role in our main theorem:

\begin{theorem}\label{mainthm}
With the above notations, suppose $y_0$ is non-torsion. Then $\mathrm{Im}(\Phi_{f,K})$ has rank $1$ and $\Sh_{\wp}(f,K)$ is finite. More precisely, we have
\[
(\mathrm{Im}(\Phi_{f,K}))^{\varepsilon} = 0 \quad \text{and} \quad (\mathrm{Im}(\Phi_{f,K}))^{-\varepsilon} = F_{\wp} \cdot y_0.
\]
\end{theorem}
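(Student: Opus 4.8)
The plan is to run Kolyvagin's Euler system machinery, following Nekov\'a\v{r}'s adaptation to higher weight, on the Heegner system of CM cycles constructed on $\mathcal A^r$. First I would fix a Kolyvagin prime $\ell$, meaning a rational prime inert in $K$ and satisfying the usual congruence conditions modulo $p$, and form the derived classes: starting from the cycle classes $y_n \in \H^1(K[n], V_\wp(f))$ attached to CM points of conductor $n$ on $X$, apply Kolyvagin's derivative operators $D_\ell = \sum_{i} i \sigma_\ell^i \in \ZZ[\Gal(K[n]/K[1])]$ to produce, for squarefree products $n$ of Kolyvagin primes, classes that become $\Gal(K[n]/K)$-invariant modulo $p^M$, and hence descend (up to the usual care with the $\H^0$ term, which vanishes because $V_\wp(f)/p^M$ has no $G_K$-invariants under the big-image hypothesis on the residual representation) to classes $c_M(n) \in \H^1(K, A_{p^M})$ where $A = V_\wp(f)/T$ for a suitable Galois-stable lattice $T$. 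The norm-compatibility relation (the ``Euler system relation'') among the $y_n$ — which is exactly what the CM cycle construction in Section~\ref{sec:CMcycles} is built to provide, via the Hecke action of $T_\ell$ at the relevant places — is what makes this descent work and controls the local behaviour of $c_M(n)$ away from $n$.

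Next I would analyze the local conditions. At primes not dividing $n\ell p$ the class $c_M(n)$ is unramified; at $p$ it lands in the Bloch--Kato finite part (this is where $p \nmid N(2r)!$ and the crystalline/good-reduction input enters, exactly as in Nekov\'a\v{r}); and at a prime $\ell \mid n$ the local class is \emph{ramified} in a controlled way, with its ramified component determined by the ``finite/singular'' comparison of $c_M(n/\ell)$ at $\ell$ via the action of $\Frob_\ell$ on the reduction. This is the key local-triviality/non-triviality dichotomy: $\loc_\ell(c_M(n))$ is ``finite'' iff $\loc_\ell(c_M(n/\ell))$ vanishes mod $p^M$, and when it is singular its class is a unit multiple of the reduction of $c_M(n/\ell)$ at $\ell$. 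Having set this up, I would invoke global duality (the Poitou--Tate exact sequence, or equivalently global reciprocity: the sum of local invariants of the cup product of two Selmer-type classes is zero). Pairing $c_M(n)$ against an element of the relevant (dual) Selmer group and using that the only uncontrolled local term sits at primes dividing $n$, one concludes that finding Kolyvagin primes $\ell$ at which $\loc_\ell(y_1) = \loc_\ell(c_M(1))$ is nonzero forces the Selmer group to be small.

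The heart of the argument — and the step I expect to be the main obstacle — is the Chebotarev argument producing enough Kolyvagin primes $\ell$ for which the localization $\loc_\ell$ of a given nonzero Selmer class is nonzero, together with the bookkeeping that turns this into the precise rank and finiteness statements. Concretely, one fixes a nonzero class $s$ in the $\varepsilon$-eigenspace (resp. a generator of a cyclic piece of the $(-\varepsilon)$-eigenspace modulo $F_\wp \cdot y_0$) of the Selmer group, and wants a prime $\ell$ whose Frobenius in the extension cut out by $A_{p^M}$ and by the Kummer-type extension attached to $s$ is prescribed; this requires that the two extensions be linearly disjoint, which is where the irreducibility/big-image hypothesis on $\bar\rho_f$ (and the non-degeneracy of the Weil pairing, using weight $>2$ and $p \nmid (2r)!$) is essential. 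Granting this, a descending induction on $M$ — the standard Kolyvagin ``one prime kills the $\varepsilon$-part, a second prime bounds the $(-\varepsilon)$-part in terms of the divisibility of $\loc_\ell(y_0)$'' argument — yields $(\Image \Phi_{f,K})^\varepsilon = 0$ and $(\Image \Phi_{f,K})^{-\varepsilon} = F_\wp \cdot y_0$, hence $\rank \Image \Phi_{f,K} = 1$; the finiteness of $\Sh_\wp(f,K)$ then drops out from the same bound applied to torsion classes, once one notes that the hypothesis ``$y_0$ non-torsion'' guarantees $\loc_\ell(y_0) \neq 0$ for a positive density of $\ell$ and hence bounds the relevant indices uniformly in $M$. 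Throughout, the only genuinely new verifications relative to Nekov\'a\v{r} are that the CM cycles on the Kuga--Sato variety over the Shimura curve $X$ satisfy the required norm relations and have the stated behaviour under complex conjugation (sign $-\varepsilon$), and that the Abel--Jacobi image lands in the Selmer group with the correct local conditions at the bad primes dividing $N^-$; these are handled in Sections~\ref{sec:CMcycles}--\ref{sec:Selmer} and feed into the above as black boxes.
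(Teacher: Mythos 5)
Your overall strategy is the same as the paper's (Kolyvagin--Nekov\'a\v{r} applied to the classes coming from CM cycles on $\mathcal A^r$): derived classes built from the $y_n$, the finite/singular local analysis at Kolyvagin primes, global reciprocity, and a Chebotarev choice of one prime to kill the $\varepsilon$-eigenspace and a second prime to bound the $(-\varepsilon)$-eigenspace against $y_0$. The genuine gap is in how you make the derived classes descend and how you justify the Chebotarev step. You assert that $\H^0(K,V_{\wp}/p^M)$ vanishes and that the relevant extensions are linearly disjoint ``under the big-image/irreducibility hypothesis on the residual representation'' --- but no such hypothesis appears in the theorem or anywhere in the paper, and it can genuinely fail for particular $\wp$ (Eisenstein or otherwise exceptional primes), so as written your descent of the $D_n$-derived classes and your Chebotarev argument are not available in the stated generality. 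The paper (following Nekov\'a\v{r}) replaces these vanishing/disjointness claims by uniform bounded-torsion statements: there are constants $s_1,s_2,a,b$, independent of $s$, such that $Y_s^{\Gal(\bar{\QQ}/K_n)}$ is killed by $p^{s_1}$ (so the kernel and cokernel of $\res_{K_1,K_n}$ are killed by $p^{s_1}$, Corollary \ref{s1kercoker}); the descent is performed only after multiplying by $p^{s_1}$ (``Kolyvagin's corestriction'', producing $\kappa_s(n)$ from $p^{s_1}D_n y_{n,\wp}$ with $s'=s+s_1$); the local classes at $v\mid N$ are killed by $p^{s_2}$; and the duality/Chebotarev bookkeeping is carried out with $p^a\H^1(K(Y_{s'})/K,Y_s)=0$ and with the evaluation maps realizing prescribed functionals only up to $p^b$, i.e. $\alpha^{\pm}_{\lambda_L,s}=p^b\psi_{\pm}$. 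All bounds are then proved only up to a fixed power of $p$ of the shape $p^{s_0+s_1+s_2+O(a+b)}$, and the theorem follows by letting $s\to\infty$ and using that $\Image(\Phi)$ is divisible. Without either adding your extra hypothesis to the statement or importing this error-term machinery, your argument ``modulo $p^M$'' does not close up.

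A smaller structural difference: the paper does not run a descending induction on $M$ over all squarefree $n$; it only ever uses $\kappa_s(1)$, $\kappa_s(\ell)$ and $\kappa_s(\ell\ell')$, choosing the two Kolyvagin primes so that evaluation at Frobenius realizes (up to $p^b$) a functional $\psi_{\varepsilon}$ of maximal exponent on $T^{\varepsilon}$, resp.\ functionals detecting $u_s(1)$ and $u_s(\ell)$. The non-torsion hypothesis on $y_0$ enters only through the integer $s_0$ measuring the exact $p$-divisibility of $y_0$ modulo torsion, which gives $\exp(u_s(1))\geq s-s_0-a$ and hence makes the final bounds nontrivial as $s\to\infty$; it is not used via a positive-density nonvanishing statement for the localizations of $y_0$ as you suggest.
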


In the statement, $\Sh_{\wp}(f,K)$ denotes the $\wp$-primary part of the Shafarevich--Tate group, defined as the cokernel of the map $\Phi_{f,K}: \CH^{r+1}(\mathcal A^r/K)_0 \otimes F_{\wp} \to \Sel_{\wp}(f,K)$.

As we mentioned, our result fits in the framework of the conjectures by Beilinson, Bloch, Kato, and Perrin-Riou. Combined with forthcoming work of Disegni on a $p$-adic Gross--Zagier formula in this setting, we expect to shed some light on these conjectures for higher weight modular forms, when the classical Heegner hypothesis does not hold.

It is also worth mentioning that the work of Bertolini, Darmon and Prasana \cite{bertolini2013generalized}, relating special values of $p$-adic $L$-series associated to twists of modular forms to the image by the $p$-adic Abel--Jacobi map of certain algebraic cycles arising in underlying motives, was adapted by Masdeu and Brooks \cite{masdeu2012cycles, brooks2013generalized} to the setting where the Heegner hypothesis is removed. In Masdeu's work, the prime $p$ divides the level of the modular form and therefore one needs to deal with a bad reduction setting, whereas in Brooks' work, $p$ is a prime of good reduction and therefore the techniques are of a rather different nature. In this framework, it would be interesting to relate special values of $p$-adic $L$-series to the images by the $p$-adic Abel--Jacobi map of the cycles that we construct in this note.

\vspace{0.3cm}

\noindent {\bf Acknowledgements.} We are very grateful to Henri Darmon and Victor Rotger for many useful discussions on the topic of this paper. We also thank Erick Knight for his helpful clarifications and suggestions on the proof of Lemma \ref{lemma:etalecoh-torsionfree}.

\section{Shimura curves and QM abelian surfaces}\label{sec:Shimuracurves}

We describe in this section the Shimura curves that will play a central role  throughout this note, namely Shimura curves associated with Eichler orders in indefinite rational quaternion algebras. We recall the usual interpretation of such curves as moduli schemes for abelian surfaces with quaternionic multiplication (also referred to as {\em fake elliptic curves}), and then focus on special points on such moduli spaces, namely, abelian surfaces with quaternionic multiplication and complex multiplication.

\subsection{General definitions}\label{sec:curves-defns}

Fix a pair of relatively prime integers $N^+$, $N^-$, such that $N^-$ is the square-free product of an even number of primes, and set $N=N^+N^-$. Let $B$ be a rational quaternion algebra of reduced discriminant $N^-$ (hence, indefinite), and fix a maximal order $\OO_B$ in $B$ and an Eichler order $\mathcal R \subseteq \OO_B$ of level $N^+$. 

For every rational place $v$, we set $B_v := B\otimes_{\QQ}\QQ_v$, and at each finite place $\ell$ we shall also write $\OO_{B,\ell} := \OO_B\otimes_{\ZZ}\ZZ_{\ell}$, $\mathcal R_{\ell} := \mathcal R\otimes_{\ZZ}\ZZ_{\ell}$. We shall fix at the outset an isomorphism $\vartheta_{\infty}: B_{\infty} \to \M_2(\RR)$, which exists because $B$ is indefinite, and also an isomorphism $B_{\ell} \to \M_2(\QQ_{\ell})$ for each prime $\ell\nmid N^-$, identifying $\mathcal R_{\ell}$ with the standard Eichler order of level $\ell^{\mathrm{val}_{\ell}(N)}$. Write $\hat{\ZZ} := \prod \ZZ_{\ell}$ for the profinite completion of $\ZZ$, and for any $\ZZ$-algebra $R$ put $\hat{R} := R \otimes_{\ZZ} \hat{\ZZ}$. Thus, for example, $\hat{\QQ}$ stands for the ring of finite $\QQ$-adeles. We also let $\hat{B} := B\otimes_{\QQ}\hat{\QQ}$. 

Let $\mathcal H^{\pm} = \CC - \RR$ be the (disjoint) union of the upper and lower complex half planes, which might be identified with the set of $\RR$-algebra homomorphisms $\Hom(\CC,\M_2(\RR))$, and consider the space of double cosets
\begin{equation}\label{XR}
X_{\mathcal R} = \left( \hat{\mathcal R}^{\times} \backslash \hat{B}^{\times}\times \mathcal H^{\pm}\right)/B^{\times} = \left( \hat{\mathcal R}^{\times} \backslash \hat{B}^{\times}\times \Hom(\CC,\M_2(\RR))\right)/B^{\times}.
\end{equation}
Here, $\hat{\mathcal R}^{\times}$ acts naturally on the left on $\hat B^{\times}$ by left multiplication, and $B^{\times}$ acts on the right on both $\hat{B}^{\times}$ (diagonally) and on $\mathcal H^{\pm}$ by linear fractional transformations under our fixed isomorphism $\vartheta_{\infty}$. This latter action corresponds to the action on $\Hom(\CC,\M_2(\RR))$ by conjugation (again under $\vartheta_{\infty}$).

It follows from the work of Deligne and Shimura that $X_{\mathcal R}$ admits a model over $\QQ$, which further is the coarse moduli scheme classifying abelian surfaces with quaternionic multiplication by $\mathcal R$. Let us recall precisely these terms.

\begin{definition}\label{defn:QMA}
Let $S$ be a $\QQ$-scheme. An {\em abelian surface with quaternionic multiplication} ({\em QM}, for short) by $\mathcal R$ is a pair $(A,\iota)$ consisting of an abelian scheme $A/S$ of relative dimension $2$ endowed with an optimal embedding $\iota: \mathcal R \to \End_S(A)$, giving an action of $\mathcal R$ on $A$. 
\end{definition}

\begin{remark}\label{rmk:polarization}
In the above definition, if $s$ is a geometric point of $S$, then the QM abelian surface $A_s$ corresponding to $s$ is endowed with a unique {\em principal polarization} which is compatible with the QM structure (see \cite{Milne-modp}). Because of this reason, we will drop the polarization off in our discussion, although the reader should keep in mind the existence of a unique polarization compatible with the QM.
\end{remark}

Consider the moduli problem of classifying QM abelian surfaces, given by the moduli functor 
\begin{equation}\label{functor:ab}
\mathcal F: \text{Schemes}/\QQ \, \, \longrightarrow \text{Sets}
\end{equation}
sending a $\QQ$-scheme $S$ to the set $\mathcal F(S)$ of isomorphism classes of abelian surfaces with QM by $\mathcal R$ over $S$. Here, an isomorphism between two abelian surfaces with QM $(A,\iota)$ and $(A',\iota')$ is an isomorphism $\psi: A \to A'$ of the underlying abelian surfaces preserving the $\mathcal R$-action on both $A$ and $A'$, i.e. such that $\psi \circ \iota(\alpha) = \iota'(\alpha)\circ \psi$ for all $\alpha \in \mathcal R$.

\begin{theorem}[\cite{ShimuraClassFields}, \cite{DeligneShimura}]
$X_{\mathcal R}$ admits a model $X = X_{N^+,N^-}/\QQ$, which is the coarse moduli scheme associated to the moduli problem corresponding to the functor $\mathcal F$. Furthermore, the {\em Shimura curve} $X/\QQ$ is a smooth, projective and geometrically connected scheme over $\QQ$.
\end{theorem}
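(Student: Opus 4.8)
The plan is to follow the classical route of Shimura, breaking the statement into three pieces: (i) identify $X_{\mathcal R}(\CC)$ with a compact connected Riemann surface, hence, by GAGA, with the complex points of a smooth projective connected curve; (ii) realise this curve as the coarse space of the moduli problem $\mathcal F$ over $\QQ$, by first rigidifying with auxiliary level structure to get a fine moduli scheme over $\QQ$ and then passing to a finite quotient; (iii) read off smoothness, projectivity and geometric connectedness. For (i), one uses the reduced norm $\mathrm{nrd}\colon\hat B^{\times}\to\hat\QQ^{\times}$, strong approximation for the norm-one group of $B$ (applicable since $B_{\infty}=\M_2(\RR)$), and the equalities $\mathrm{nrd}(\hat{\mathcal R}^{\times})=\hat\ZZ^{\times}$ and $\hat\QQ^{\times}=\QQ^{\times}_{>0}\cdot\hat\ZZ^{\times}$ (the latter because $\QQ$ has class number one) to see that $X_{\mathcal R}(\CC)$ is connected; then, fixing a connected component and using $\vartheta_{\infty}$, one gets $X_{\mathcal R}(\CC)\cong\Gamma\backslash\mathcal H$, where $\mathcal H$ is the upper half plane and $\Gamma=\vartheta_{\infty}(\{\gamma\in\mathcal R^{\times}:\mathrm{nrd}(\gamma)>0\})$ acts by M\"obius transformations. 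As $\Gamma$ is a discrete subgroup of $\PGL_2^+(\RR)$ of finite covolume, $\Gamma\backslash\mathcal H$ is a connected Riemann surface, already compact when $N^-\neq1$ (then $B$ is a division algebra, so $\Gamma$ contains no unipotents) and compactified by the finitely many cusps $\Gamma\backslash\mathbb P^1(\QQ)$ when $N^-=1$. By GAGA and the Riemann existence theorem it is $X^{\mathrm{an}}_{\CC}$ for a unique smooth projective geometrically connected curve $X_{\CC}$.

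For (ii), the functor $\mathcal F$ is not representable, since a QM abelian surface $(A,\iota)$ has the nontrivial automorphisms $\iota(\mathcal R^{\times}_1)$; to rigidify, fix an integer $m\geq3$ prime to $N$ and consider the functor $\mathcal F_m$ of triples $(A,\iota,\nu)$ with $\nu$ a full level-$m$ structure compatible with $\iota$ and with the canonical principal polarisation. Rigidity of abelian schemes shows objects of $\mathcal F_m$ have no nontrivial automorphisms, and --- passing to the associated ``false elliptic curve'' and invoking the standard representability of PEL-type moduli problems --- $\mathcal F_m$ is representable by a quasi-projective $\QQ$-scheme $X_m$, smooth of relative dimension $1$ over $\QQ$ by Serre--Tate deformation theory. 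The finite group $G_m$ of automorphisms of the level-$m$ structure acts on $X_m$, the geometric quotient $X:=X_m/G_m$ exists as a quasi-projective $\QQ$-scheme independent of $m$, and by construction it is a coarse moduli scheme for $\mathcal F$; base-changing to $\CC$ and comparing with the analytic description of $\mathcal F(\CC)$ identifies $X\times_{\QQ}\CC$ with $X_{\CC}$.

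For (iii): geometric connectedness is Step (i); smoothness of $X$ holds because $X_m$ is a smooth $\QQ$-curve and, in dimension one, a finite quotient of a smooth curve is normal, hence regular, hence smooth over the perfect field $\QQ$ (equivalently, $X$ is \'etale-locally $X_m$ away from the finitely many elliptic points, and the tame quotient singularity of a curve is smooth); and for projectivity, when $N^-\neq1$ the moduli problem is proper --- over a complete discrete valuation ring a QM abelian surface acquires good reduction after a finite base change, since $B$ being a division algebra rules out a toric part in the N\'eron model (which would force $\M_2(\QQ)\hookrightarrow B$) --- so the valuative criterion holds for $X_m$ and hence for $X$, and a smooth proper curve over a field is projective, while for $N^-=1$ one compactifies $X$ by adjoining its finitely many cusps, which are rational over $\QQ$. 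The genuinely substantial input, which I would simply quote from the cited work of Shimura and Deligne, is that the $\QQ$-structure on the coarse space --- a priori pinned down only after base change to $\CC$ --- is the canonical model, i.e. that the descent datum coming from the moduli problem matches Shimura's reciprocity law for the Galois action on CM points; this descent (together with the properness assertion when $B$ is a division algebra) is the crux, everything else being formal.
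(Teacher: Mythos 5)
The paper offers no argument for this theorem at all: it is quoted as a classical result of Shimura and Deligne, with \cite{ShimuraClassFields} and \cite{DeligneShimura} standing in for the proof. Your sketch is, in essence, a correct reconstruction of the standard argument in those sources: analytic uniformization and connectedness via strong approximation together with $\mathrm{nrd}(\hat{\mathcal R}^{\times})=\hat{\ZZ}^{\times}$ (this is exactly why the paper's Remark \ref{rmk:XC} can say the double coset $\hat{\mathcal R}^{\times}\backslash\hat B^{\times}/B^{\times}$ is trivial); rigidification by a full level-$m$ structure with $m\geq 3$, representability of the rigidified PEL problem over $\QQ$, and passage to the quotient by $G_m$ to get the coarse space; smoothness of the one-dimensional quotient; and properness via potential good reduction, where the QM action kills the toric part of the semistable reduction because $B$ is a division algebra. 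Two small remarks. First, your case $N^-=1$ is not really covered by the statement: there the coarse space of $\mathcal F$ is an open modular curve and adjoining cusps destroys the coarse moduli property, so projectivity genuinely requires $N^->1$ (the paper imposes $N^->1$ explicitly from Section 3 onward, and the division-algebra case is the only one used). Second, your closing remark slightly overstates what must be imported: in the construction you give, the $\QQ$-model is produced directly by the moduli problem being defined over $\QQ$, so no descent from $\CC$ is needed for the statement as written; what Shimura's and Deligne's work adds --- and what the paper does use later, for the Galois action on $\mathrm{CM}(X,K)$ --- is the identification of this moduli-theoretic model with the canonical model characterized by the reciprocity law on CM points, which is logically separate from the existence, smoothness, projectivity and geometric connectedness asserted here.
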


\begin{remark}
Alternatively, $X_{\mathcal R}$ is also the coarse moduli scheme classifying abelian surfaces with quaternionic multiplication by the maximal order $\mathcal O_B$ together with a level $N^+$-structure. 
\end{remark}

For our purposes, it is useful to introduce an auxiliary Shimura curve classifying QM abelian surfaces with suitable extra structure in order to make the moduli problem {\em fine}. 

\begin{definition}\label{defn:QMAextra}
Let $S$ be a $\QQ$-scheme and $M\geq 3$ be an integer prime to $N$. An abelian surface with QM by $\mathcal R$ and {\em full} level $M$-structure over $S$ is a triple $(A,\iota,\bar{\nu})$, where $(A,\iota)$ is a pair as in Definition \ref{defn:QMA} and $\bar{\nu}: (\mathcal R/M\mathcal R)_S \to A[M]$ is an $\mathcal R$-equivariant isomorphism from the constant group scheme $(\mathcal R/M\mathcal R)_S$ to the group scheme of $M$-division points of $A$.
\end{definition}

The corresponding moduli problem is now given by the moduli functor 
\begin{equation}\label{functor:abM}
\mathcal F_M: \text{Schemes}/\QQ \, \, \longrightarrow \text{Sets}
\end{equation}
sending a $\QQ$-scheme $S$ to the set $\mathcal F_M(S)$ of isomorphism classes of triples over $S$ as in Definition \ref{defn:QMAextra}. In this case, this moduli functor is represented by a {\em fine} moduli scheme over $\QQ$, which we will denote $X^M = X^M_{N^+,N^-}/\QQ$. It is also a smooth and projective curve over $\QQ$, although it is not geometrically connected. One can give an adelic description of $X^M$ in terms of double cosets as we did above for $X$. By forgetting the extra level structure at $M$, there is a natural Galois covering of Shimura curves
\[
X^M \, \, \longrightarrow \, \, X, \quad (A,\iota,\bar{\nu}) \, \, \longmapsto \, \, (A,\iota),
\]
whose Galois group is isomorphic to $G(M) := \GL_2(\ZZ/M\ZZ)/{\pm 1}$, using that 
\[
(\mathcal R/M\mathcal R)^{\times} \simeq (\OO_B/M\OO_B)^{\times} \simeq  \GL_2(\ZZ/M\ZZ).
\]

Since this second moduli problem is fine, there exists a universal family of abelian surfaces with QM by $\mathcal R$ and full level $M$-structure over $X^M$, corresponding to $1_{X^M} \in \mathrm{Hom}(X^M,X^M)$ under the bijection $\mathcal F_M(X^M) \leftrightarrow \mathrm{Hom}(X^M,X^M)$. We shall refer to this family as the {\em universal QM abelian surface over} $X^M$, and we will denote it by
\[
\pi: \mathcal A \, \, \longrightarrow \, \, X^M.
\]
Given a geometric point $x: \Spec \, L \to X^M$, the fibre $\mathcal A_x := \mathcal A \times_x \Spec \, L$ is an abelian surface with QM by $\mathcal R$ and full level $M$-structure defined over $L$, representing the isomorphism class corresponding to the moduli of $x$.

\begin{remark}\label{rmk:XC}
Over the complex numbers, $X^{an}(\CC)$ is identified with the compact Riemann surface $\Gamma \backslash \mathcal H$, as complex algebraic curves, where $\Gamma = \Gamma_{N^+,N^-}\subseteq \SL_2(\RR)$ is the image under $\vartheta_{\infty}$ of the group of units of reduced norm $1$ in the Eichler order $\mathcal R$. Indeed, upon identifying $\mathcal H$ with $\SL_2(\RR)/\SO_2(\RR)$ and noticing that $\hat{\mathcal R}^{\times}\backslash \hat B^{\times}/B^{\times}$ is trivial, one can easily define from \eqref{XR} an analytic isomorphism from $X^{an}(\CC)$ to $\Gamma \backslash \mathcal H$. Similarly, $X^{M,an}(\CC)$ can be identified with a finite union of compact Riemann surfaces of the form $\Gamma^M\backslash \mathcal H$.
\end{remark}

\begin{remark}\label{rmk:genus2}
We have reviewed above the usual moduli interpretation of Shimura curves in terms of QM abelian surfaces. However, the category of abelian surfaces being equivalent to the category of {\em stable} curves of genus $2$, one could also regard the Shimura curve $X$ as the coarse moduli space over $\QQ$ for stable curves of genus $2$ with QM by $\mathcal R$. Then one could consider the universal genus $2$ curve with QM over $X^M$, say $\mathcal C \to X^M$.
\end{remark}

\subsection{QM abelian surfaces with complex multiplication}\label{sec:QMCM}

Suppose $(A,\iota)$ is an abelian surface over $\CC$ with QM by $\mathcal R$, so that it defines a point $P=[A,\iota] \in X(\CC)$. Recall that $\iota: \mathcal R \hookrightarrow \End(A)$ is an optimal embedding of rings, giving an action of $\mathcal R$ on $A$ by endomorphisms. It is well-known that in this situation either 
\begin{itemize}
\item[(i)] $A$ is simple, and $\End^0(A) := \End(A)\otimes_{\ZZ}\QQ = B$, or
\item[(ii)] $A$ is not simple, and $\End^0(A) \simeq \mathrm M_2(K)$ for some imaginary quadratic field $K$ which embeds in $B$.
\end{itemize}

In the second case, $A$ is said to have CM by the imaginary quadratic field $K$. It is well-known that if $A$ has CM by $K$, i.e. $\End^0(A) \simeq \mathrm M_2(K)$, then $A$ is isogenous to the square of an elliptic curve with CM by $K$ (and conversely). However, we are interested in the category of QM abelian surfaces up to isomorphism rather than up to isogeny, thus this characterization is not sufficient for our goals.

In other terms, let $\End_{\mathcal R}(A) = \End(A,\iota) \subset \End(A)$ denote the subring of endomorphisms which commute with the QM action, i.e. 
\[
 \End_{\mathcal R}(A) = \End(A,\iota) := \{\gamma \in \End(A): \gamma \circ \iota(\alpha) = \iota(\alpha) \circ \gamma \text{ for all } \alpha \in \mathcal R\}.
\]
Then $\End(A,\iota)$ is either $\ZZ$ or an order in an imaginary quadratic field $K$. These two cases correspond, respectively, to (i) and (ii) above. If $K$ is an imaginary quadratic field (splitting $B$) and $\End(A,\iota)\simeq R_c$, where $R_c\subseteq K$ denotes the order of conductor $c\geq 1$ in $K$, then $(A,\iota)$ is said to have complex multiplication (CM) by $R_c$, and $P=[A,\iota]$ is said to be a CM (or Heegner) point by $R_c$. We write $\mathrm{CM}(X,R_c)$ for the set of all such points. 

There is a one-to-one correspondence between the set $\mathrm{CM}(X,R_c)$ and the set of ($\mathcal R^{\times}$-conjugacy classes of) optimal embeddings of $R_c$ into $\mathcal R$, given by associating to $P = [A,\iota] \in \mathrm{CM}(X,R_c)$ the embedding 
\[
 \varphi_P: R_c \simeq \End(A,\iota) \, \, \hookrightarrow \, \, \End_{\mathcal R}(\mathrm H^1(A,\ZZ)) \simeq \mathcal R,
\]
normalized as in \cite[Definition 1.3.1]{JordanPhD}.

Fix a CM point $P=[A,\iota]=[A_{\tau},\iota_{\tau}] \in \mathrm{CM}(X,R_c)$, and assume that $(c,N)=1$. Then $\mathcal R$ might be regarded via $\varphi_P$ as a locally free right $R_c$-module of rank $2$, hence $\mathcal R \simeq R_c \oplus e\mathfrak a$ for some $e\in B$ and some fractional $R_c$-ideal $\mathfrak a$. If $A_{\tau} = \CC^2/\Lambda_{\tau}$, with $\Lambda_{\tau} = \iota(\mathcal R)v$, $v=(\tau, 1)^t$, then we find that 
\begin{equation}\label{splittinglattice}
 \Lambda_{\tau} = \iota(\mathcal R)v = \iota(\varphi_P(R_c))v \oplus e\iota(\varphi_P (\mathfrak a))v.
\end{equation}
Further, $\iota(\varphi_P(K)) \subset \CC$ embeds diagonally in $\M_2(\CC)$, because $\End(A,\iota) = R_c$ and $\iota(\mathcal R)\otimes \RR = \M_2(\RR)$, hence
\[
 \Lambda_{\tau} = \iota(\varphi_P(R_c))v \oplus \iota(\varphi_P (\mathfrak a))ev
\]
and it follows that $A$ is isomorphic to a product $E \times E_{\mathfrak a}$ of elliptic curves with CM by $R_c$, where $E(\CC)=\CC/R_c$ and $E_{\mathfrak a}(\CC)=\CC/\mathfrak a$. The action of $\mathcal R$ on $E \times E_{\mathfrak a}$ induces the natural left action of $\mathcal R$ on $R_c \oplus e\mathfrak a$.

\begin{remark}\label{rmk:genus2CM}
In line with Remark \ref{rmk:genus2}, if $C$ is a stable genus $2$ curve with QM (meaning that its Jacobian variety $\mathrm{Jac}(C)$ has an action of $\mathcal R$ by endomorphisms), then $C$ is said to have CM if the subring of endomorphisms of $\mathrm{Jac}(C)$ which commute with the QM action form an order in an imaginary quadratic field. Then it is not hard to see that $C$ is isomorphic to the union of two elliptic curves meeting transversally at their identities, and $\mathrm{Jac}(C)$ is identified with their product.
\end{remark}

\subsection{Isogenies of QM abelian surfaces}\label{sec:QMisogenies}

As we already pointed out above, an isomorphism $(A,\iota) \to (A',\iota')$ between two QM abelian surfaces is an isomorphism of the underlying varieties which preserves the quaternionic action. More generally, the same notion applies for isogenies: if $(A,\iota)$ and $(A',\iota')$ are abelian surfaces with QM by $\mathcal R$, then an isogeny $\psi: A \to A'$ is an {\em isogeny of QM abelian surfaces}, or a {\em QM-isogeny} for short, if $\psi\circ \iota(\alpha) = \iota'(\alpha)\circ \psi$ for all $\alpha \in \mathcal R$. We write $\Hom_{\mathcal R}(A,A')$ for the ring of homomorphisms from $A$ to $A'$ which commute with the $\mathcal R$-action, so that non-zero elements in this ring correspond to QM-isogenies from $A$ to $A'$.

\begin{lemma}
 Let $(A,\iota)$ and $(A',\iota')$ be two abelian surfaces with QM by $\mathcal R$, and suppose that they are QM-isogenous. Then 
 \[
  \Hom_{\mathcal R}(A,A') \otimes_{\ZZ} \QQ \simeq \End(A,\iota)\otimes_{\ZZ} \QQ \simeq \End(A',\iota')\otimes_{\ZZ}\QQ.
 \]
\end{lemma}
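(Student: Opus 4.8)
The plan is to show that the natural ring homomorphism
\[
\Hom_{\mathcal R}(A,A') \otimes_{\ZZ} \QQ \, \, \longrightarrow \, \, \Hom_{\mathcal R}(A,A') \otimes_{\ZZ}\QQ
\]
becomes, after fixing a QM-isogeny $\psi_0: A \to A'$, a bimodule isomorphism onto $\End(A',\iota')\otimes\QQ$, and symmetrically onto $\End(A,\iota)\otimes\QQ$; this reduces everything to checking that $\End(A,\iota)\otimes\QQ$ is a \emph{division algebra} (either $\QQ$ or an imaginary quadratic field, by the dichotomy recalled in Section \ref{sec:QMCM}), so that no nonzero element is a zero divisor.

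First I would fix a nonzero QM-isogeny $\psi_0 \in \Hom_{\mathcal R}(A,A')$, which exists by hypothesis, and let $\widehat{\psi_0} \in \Hom_{\mathcal R}(A',A)$ be the isogeny with $\widehat{\psi_0}\circ\psi_0 = [\deg\psi_0]_A$ and $\psi_0\circ\widehat{\psi_0} = [\deg\psi_0]_{A'}$; note $\widehat{\psi_0}$ again commutes with the $\mathcal R$-action since $\psi_0$ does. Then the maps $\gamma \mapsto \psi_0\circ\gamma\circ\widehat{\psi_0}$ and $\gamma \mapsto \widehat{\psi_0}\circ\gamma\circ\psi_0$ define, after tensoring with $\QQ$, mutually inverse (up to the scalar $\deg\psi_0$) $\QQ$-linear maps between $\End(A,\iota)\otimes\QQ$ and $\End(A',\iota')\otimes\QQ$, and similarly $\psi_0$ induces $\QQ$-linear maps $\End(A,\iota)\otimes\QQ \to \Hom_{\mathcal R}(A,A')\otimes\QQ$ (by $\gamma\mapsto\psi_0\circ\gamma$) and $\Hom_{\mathcal R}(A,A')\otimes\QQ \to \End(A,\iota)\otimes\QQ$ (by $\phi\mapsto\widehat{\psi_0}\circ\phi$) whose composites are multiplication by $\deg\psi_0$, hence isomorphisms after inverting the integer $\deg\psi_0$ in $\QQ$. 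The same argument on the other side, using $\phi\mapsto\phi\circ\widehat{\psi_0}$ and $\phi\mapsto\phi\circ\psi_0$, gives $\Hom_{\mathcal R}(A,A')\otimes\QQ \simeq \End(A',\iota')\otimes\QQ$, and chaining the two yields the claimed triple isomorphism.

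The one point that needs a word is that these maps are genuinely \emph{well-defined on the $\mathcal R$-commuting subspaces}, i.e. that $\psi_0\circ\gamma\circ\widehat{\psi_0}$ lies in $\End(A',\iota')$ when $\gamma\in\End(A,\iota)$ and likewise for the others; this is immediate from $\psi_0\iota(\alpha)=\iota'(\alpha)\psi_0$, $\gamma\iota(\alpha)=\iota(\alpha)\gamma$, $\widehat{\psi_0}\iota'(\alpha)=\iota(\alpha)\widehat{\psi_0}$, multiplied in the appropriate order. I expect the only genuine subtlety — and thus the ``main obstacle'' — to be invoking the structural fact, recalled just before this lemma, that $\End(A,\iota)$ is \emph{either} $\ZZ$ \emph{or} an order in an imaginary quadratic field (never, say, an order in a quaternion algebra or a product), so that $\End(A,\iota)\otimes\QQ$ is a field; this is what guarantees that the above $\QQ$-linear isomorphisms are in fact $\QQ$-algebra isomorphisms and that all three objects have the same (one or two) dimension over $\QQ$. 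Everything else is the standard ``isogeny invariance of the rational endomorphism algebra'' argument, transported verbatim to the category of QM abelian surfaces.
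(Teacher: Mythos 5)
Your proof is correct, and it rests on the same underlying mechanism as the paper's proof (isogeny invariance of the rational Hom/End modules via a fixed QM-isogeny and a quasi-inverse), but the two delicate points are handled differently. The paper obtains its quasi-inverse by dualizing $\psi$ through the unique principal polarizations compatible with the QM structure (Remark \ref{rmk:polarization}), whereas you use the complementary isogeny $\widehat{\psi_0}$ satisfying $\widehat{\psi_0}\circ\psi_0=[\deg\psi_0]_A$ and check $\mathcal R$-equivariance directly; this works (the required identity holds after precomposing with the epimorphism $\psi_0$) and avoids the polarization input altogether. More significantly, for the comparison $\Hom_{\mathcal R}(A,A')\otimes_{\ZZ}\QQ\simeq\End(A,\iota)\otimes_{\ZZ}\QQ$ the paper only checks injectivity of $\varphi\mapsto\psi\circ\varphi$ and then concludes by comparing $\ZZ$-ranks via the dichotomy (rank $1$ or rank $2$ according to whether there is CM), while you exhibit maps in both directions whose composites are multiplication by $\deg\psi_0$, so bijectivity after tensoring with $\QQ$ is automatic and the CM dichotomy is never used. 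This makes your diagnosis of the ``main obstacle'' slightly off: the fact that $\End(A,\iota)\otimes\QQ$ is a field plays no role in your argument --- the normalized conjugation $\gamma\mapsto(\deg\psi_0)^{-1}\,\psi_0\circ\gamma\circ\widehat{\psi_0}$ is a ring isomorphism by a one-line computation, with no zero-divisor considerations --- and your route is in fact the cleaner one precisely because it dispenses with that structural input, which the paper does need for its rank-counting step. Two cosmetic slips, neither affecting the argument: your opening display describes a map from $\Hom_{\mathcal R}(A,A')\otimes_{\ZZ}\QQ$ to itself (evidently a typo), and the two conjugation maps between the endomorphism algebras are mutually inverse up to $(\deg\psi_0)^2$, not $\deg\psi_0$.
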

\begin{proof}
Pick any QM-isogeny $\psi: A' \to A$. Thanks to the existence of (unique) principal polarizations on $A$ and $A'$, compatible with the QM structure (cf. Remark \ref{rmk:polarization}), one might regard the dual isogeny of $\psi$ as a QM-isogeny $\psi^{\vee}: A \to A'$, giving rise to an inverse isogeny $\psi^{-1}: A \to A'$ in $\Hom_{\mathcal R}(A,A')\otimes_{\ZZ}\QQ$. Now notice first that the rule $\varphi \mapsto \psi^{-1} \circ \varphi \circ \psi$ establishes an isomorphism 
\[
\End(A,\iota)\otimes_{\ZZ} \QQ \simeq \End(A',\iota')\otimes_{\ZZ} \QQ.
\]

Secondly, $\varphi \mapsto \psi \circ \varphi$ defines an injective morphism of $\ZZ$-modules $\Hom_{\mathcal R}(A,A') \hookrightarrow \End(A,\iota)$. But both of them are either of rank $1$ (if $A$ does not have CM, neither does $A'$) or of rank $2$ (if $A$ has CM by some imaginary quadratic field, so does $A'$). Thus it follows that this injective morphism induces an isomorphism between $\Hom_{\mathcal R}(A,A') \otimes_{\ZZ} \QQ$ and $\End(A,\iota)\otimes_{\ZZ} \QQ$.
\end{proof}

\begin{corollary}
 Suppose $(A,\iota)$ is a QM-abelian surface with CM by (some order in) $K$, and let $(A',\iota')$ be a second QM-abelian surface which is QM-isogenous to $(A,\iota)$. Then $(A',\iota')$ also has CM by (some order in) $K$, and in particular 
 \[
  \Hom_{\mathcal R}(A,A') \otimes_{\ZZ} \QQ \simeq K.
 \]
\end{corollary}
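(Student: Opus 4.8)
The plan is to read off the statement directly from the preceding Lemma, the only extra ingredient being the dichotomy for the commutant of the quaternionic action recalled in Section~\ref{sec:QMCM}.

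First, since $(A,\iota)$ has CM by an order in $K$, by definition $\End(A,\iota)\simeq R_c$ for some order $R_c\subseteq K$, so $\End(A,\iota)\otimes_{\ZZ}\QQ\simeq K$. As $(A,\iota)$ and $(A',\iota')$ are QM-isogenous, the Lemma yields
\[
\Hom_{\mathcal R}(A,A')\otimes_{\ZZ}\QQ \;\simeq\; \End(A,\iota)\otimes_{\ZZ}\QQ \;\simeq\; \End(A',\iota')\otimes_{\ZZ}\QQ,
\]
and therefore all three $\QQ$-algebras are isomorphic to $K$; in particular $\Hom_{\mathcal R}(A,A')\otimes_{\ZZ}\QQ\simeq K$, which is the final assertion.

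It then remains to check that $(A',\iota')$ itself has CM by an order in $K$. By the classification recalled above, $\End(A',\iota')$ is either $\ZZ$ or an order in an imaginary quadratic field; the first case is impossible because $\End(A',\iota')\otimes_{\ZZ}\QQ\simeq K$ has $\QQ$-dimension $2$. Hence $\End(A',\iota')$ is an order in an imaginary quadratic field whose fraction field is isomorphic to $K$, i.e. $(A',\iota')$ has CM by (an order in) $K$. There is essentially no obstacle here: the whole content sits in the Lemma, and one only needs the bookkeeping that a rank-$2$ commutant is forced to be an order in $K$. (Alternatively, one could argue via the isogeny decomposition $A'\sim E'\times E''$ into CM elliptic curves, but invoking the Lemma is cleaner.)
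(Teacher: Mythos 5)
Your proof is correct and follows essentially the same route as the paper: deduce $\End(A,\iota)\otimes_{\ZZ}\QQ\simeq K$ from the CM hypothesis and then invoke the preceding Lemma. You simply make explicit the small dichotomy argument (commutant of rank $2$ forces CM by $K$) which the paper leaves implicit in "the statement follows directly from the previous lemma."
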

\begin{proof}
 If $(A,\iota)$ has CM by $K$, then $\End(A,\iota)$ is an order in $K$, and therefore $\End(A,\iota)\otimes_{\ZZ} \QQ \simeq K$. Then the statement follows directly from the previous lemma.
\end{proof}

\subsection{The N\'eron--Severi group of a QM abelian surface with CM}\label{sec:NSQM}

Let $P=[(A,\iota)]\in \mathrm{CM}(X,R_c) \subseteq X(K_c)$ be a CM point on $X$ by $R_c$, represented by a QM abelian surface $(A,\iota)$ with $\End(A,\iota)=R_c$, where $R_c$ denotes as before the order of conductor $c$ in the imaginary quadratic field $K$.

We write as above $A \simeq E \times E_{\mathfrak a}$ for some fractional $R_c$-ideal $\mathfrak a$. The N\'eron--Severi group $\NS(A)$ of the abelian surface $A$ is then identified with 
\[
 \NS(E\times E_{\mathfrak a}) \simeq \ZZ(E\times 0)\oplus \ZZ(0\times E_{\mathfrak a})\oplus \Hom(E,E_{\mathfrak a}) \simeq \ZZ(E\times 0)\oplus \ZZ(0\times E_{\mathfrak a})\oplus \mathfrak a,
\]
where in the first isomorphism an element $\gamma \in \Hom(E,E_{\mathfrak a})$ corresponds to the class of the divisor 
\[
 Z_{\gamma} := \Gamma_{\gamma} - (E\times 0) - \deg(\gamma)(0\times E_{\mathfrak a}) \subseteq E\times E_{\mathfrak a} \simeq A,
\]
with $\Gamma_{\gamma}$ standing for the graph of $\gamma$, and in the second isomorphism we use that $\Hom(E,E_{\mathfrak a}) \simeq \mathfrak a$.

Complex conjugation acts through the non-trivial element $\sigma \in \Gal(K/\QQ)$ on $\mathfrak a$, and then defines a decomposition $\mathfrak a = \mathfrak{a}_{+} \oplus \mathfrak{a}_{-}$, where $\mathfrak{a}_+$ (resp. $\mathfrak{a}_-$) is the $\ZZ$-submodule of $\mathfrak{a}$ on which $\sigma$ acts as multiplication by $+1$ (resp. -1). Then 
\[
 \mathfrak a_{+} = \ZZ a_{+} \quad \text{and} \quad \mathfrak a_{-} = \ZZ a_{-}
\]
for some elements $a_+ \in \QQ \cap \mathfrak a$ and $a_- \in \mathfrak a \subseteq K$, which we might regard either as elements in $K$ or as isogenies from $E$ to $E_{\mathfrak a}$. Notice that $a_-$ is purely imaginary. Rescaling the element $e \in B$ appearing in the decomposition of \eqref{splittinglattice} by a suitable non-zero scalar in $\QQ$ if necessary, we assume that $(a_{-})^2 = -D_c$, where $R_c = \ZZ[\sqrt{-D_c}]$. Therefore, we might rewrite $\mathfrak a$ under this convention as 
\[
\mathfrak a = \ZZ a \oplus \ZZ\sqrt{-D_c}, \quad \text{for some } a \in \QQ^{\times}.
\]
This normalization depends on the choice of a square-root $\sqrt{-D_c}$ of $-D_c$, and therefore is uniquely defined only up to sign. Observe also that since $\mathfrak a$ is a fractional $R_c$-ideal, we have $R_c\mathfrak a \subseteq \mathfrak a$, which implies in particular that in fact $a \in \ZZ$ and $a\mid D_c$, hence $a^{-1}D_c \in \ZZ$.

The N\'eron--Severi group of $A$ is then the (free) $\ZZ$-module of rank $4$ generated by (the classes of) the cycles $E\times 0$, $0\times E_{\mathfrak a}$, $Z_{a}$ and $Z_{\sqrt{-D_c}}$. Furthermore, the cycle $Z_{\sqrt{-D_c}}$ is orthogonal to the rank $3$ submodule $\langle E\times 0, 0\times E_{\mathfrak a}, Z_{a} \rangle$.

\section{Modular forms and $p$-adic Galois representations}

The main goal of this section is to explain how the $p$-adic Galois representation $V(f_{\infty})$ associated with a newform $f_{\infty} \in S_k(\Gamma_0(N))$ can be realized in the middle \'etale cohomology of a suitable Kuga--Sato variety over a Shimura curve, by following the approach of Besser \cite{BesserCycles} and Iovita--Spie{\ss} \cite{IovitaSpiess}. To do so, we first need to recall the Jacquet--Langlands correspondence and to introduce the Kuga--Sato varieties that will be involved.

\subsection{Modular forms and Jacquet--Langlands correspondence}

Fix an integer $N\geq 1$, and any factorization $N=N^+N^-$ of $N$ such that $\gcd(N^+,N^-)=1$ and $N^- > 1$ is the square-free product of an even number of primes. Associated to each of these factorizations, we can consider the Shimura curve $X_{N^+,N^-}/\QQ$ as above corresponding to an Eichler order $\mathcal R_{N^+,N^-}$ of level $N^+$ in the indefinite quaternion algebra $B$ of discriminant $N^-$. In this section we briefly recall the Jacquet--Langlands correspondence between classical cuspidal forms of level $\Gamma_0(N)$ and modular forms on the Shimura curve $X_{N^+,N^-}$.

Let $k=2r+2\geq 2$ be an even integer. In order to define modular forms of weight $k$ with respect to $\mathcal R_{N^+,N^-}$, identify the Lie algebra of left invariant differential operators on $B_{\infty}^{\times} :=(B\otimes_{\QQ}\RR)^{\times} \simeq \GL_2(\RR)$ with $\M_2(\CC)$, and define the differential operators
\[
W_{\infty} = \frac{1}{2}\left( \begin{smallmatrix} 0 & -\sqrt{-1} \\ \sqrt{-1} & 0 \end{smallmatrix} \right), \quad 
X_{\infty} = \left( \begin{smallmatrix} 1 & \sqrt{-1} \\ \sqrt{-1} & -1 \end{smallmatrix} \right).
\]

A (holomorphic) modular form of weight $k$ with respect to $\mathcal R$ is then a function 
\[
f: (B\otimes_{\QQ} \mathbb A_{\QQ})^{\times} = \hat B^{\times} \times \GL_2(\RR) \, \longrightarrow \, \CC
\]
satisfying the following properties:
\begin{itemize}
\item[i)] for every $b \in (B\otimes_{\QQ} \mathbb A_{\QQ})^{\times}$, the function $\GL_2(\RR) \to \CC$ defined by the rule $x \mapsto f(xb)$ is of $C^{\infty}$-class and satisfies $W_{\infty}f = (k/2)f$, $\bar{X}_{\infty}f = 0$;
\item[ii)] for every $\gamma \in B^{\times}$ and every $u \in \hat{\mathcal R}^{\times} \times \RR^{>0}$, $f(ub\gamma) = f(b)$.
\end{itemize}
The ($\CC$-)vector space of all modular forms of weight $k$ with respect to $\mathcal R_{N^+,N^-}$ will be denoted $S_k(X_{N^+,N^-})$.

Alternatively, by considering the congruence subgroup $\Gamma_{N^+,N^-} \subseteq \SL_2(\RR)$ and the identification of $X_{N^+.N^-}^{an}(\CC)$ with the (compact) Riemann surface $\Gamma_{N^+,N^-}\backslash \mathcal H$ as in Remark \ref{rmk:XC}, a modular form of weight $k$ with respect to $\mathcal R_{N^+,N^-}$ is the same as a holomorphic function $f: \mathcal H \, \longrightarrow \, \CC$ such that 
\[
f(\gamma \tau) = (c\tau+d)^kf(\tau) \quad \text{for all } \gamma = \left( \begin{smallmatrix} \ast & \ast \\ c & d \end{smallmatrix}\right) \in \Gamma_{N^+,N^-}.
\]

Under our assumption that $N^- > 1$, observe that no growth condition needs to be imposed at the cusps, since the Riemann surface $\Gamma_{N^+,N^-}\backslash \mathcal H$ is already compact.

The Shimura curve $X_{N^+,N^-}$ comes equipped with a ring of Hecke correspondences, which can be easily introduced by using the adelic description of $X_{N^+,N^-}$ given above (cf. \cite[Section 1.5]{BD_MumfordTate}). Such correspondences give rise to Hecke operators on the spaces of modular forms $S_k(X_{N^+,N^-})$. Indeed, the discrete double coset space $\hat{\mathcal R}_{N^+,N^-}^{\times}\backslash \hat B^{\times} / \hat{\QQ}^{\times}$ might be written as a product of local double coset spaces
\begin{equation}\label{localdoublecoset}
(\mathcal R_{N^+,N^-} \otimes \ZZ_{\ell})^{\times} \backslash (B\otimes \QQ_{\ell})^{\times} /\QQ_{\ell}^{\times},
\end{equation}
and this decomposition allows to define local correspondences, at each rational prime, that extend to global correspondences on $X_{N^+,N^-}$.

For each prime $\ell \nmid N$, the space \eqref{localdoublecoset} is identified with $\PGL_2(\ZZ_{\ell})\backslash \PGL_2(\QQ_{\ell})$, which in turn corresponds to the Bruhat--Tits tree of $\PGL_2(\QQ_{\ell})$. Thus there is a natural degree $\ell+1$ correspondence, sending each vertex $g\in\PGL_2(\ZZ_{\ell})\backslash \PGL_2(\QQ_{\ell})$ to the formal sum of its $\ell+1$ neighbours, denoted by $T_{\ell}$. This extends to a correspondence on $X_{N^+,N^-}$ of degree $\ell+1$, still denoted $T_{\ell}$. At each prime $q\mid N^+$, \eqref{localdoublecoset} is identified instead with the set of chains of edges of length $a$ in the Bruhat--Tits tree of $\PGL_2(\QQ_q)$, if $q^a\mid\mid N^+$. There is a natural involution on this set, corresponding to reversing the orientation of the edges in the Bruhat--Tits tree, and this extends again to an involution on $X_{N^+,N^-}$, that will be denoted by $W_q$. One can also define a correspondence $U_q$ for such primes; if $q$ divides exactly $N^+$, for instance, then $U_q$ is the degree $q$ correspondence defined by sending an edge $e$ in the Bruhat--Tits tree to the formal sum of the $q$ edges $e'\neq e$ having the same source as $e$. Finally, at primes $q\mid N^-$, the local space \eqref{localdoublecoset} consists only of two elements and the only involution defined on such set, which extends to an involution on the Shimura curve $X_{N^+,N^-}$, will be denoted also by $W_q$.

The Hecke operators $T_{\ell}$ for primes $\ell\nmid N$ are referred to as the {\em good} Hecke operators, whereas the operators $U_q$ are commonly named {\em bad} Hecke operators. The involutions $W_q$ are the so-called Atkin--Lehner involutions, and form a group of automorphisms $\mathcal W \simeq (\ZZ/2\ZZ)^{\omega(N)}$ of $X_{N^+,N^-}$, where $\omega(N)$ is the number of prime factors of $N$. Both the good and the bad operators, as well as the Atkin--Lehner involutions, act also as endomorphisms on the spaces $S_k(X_{N^+,N^-})$ of weight $k$ modular forms. We denote by $\mathbb T_{N^+,N^-}$ the $\ZZ$-algebra generated by the good Hecke operators $T_{\ell}$ together with the Atkin--Lehner involutions $W_q$.

The Jacquet--Langlands correspondence establishes a Hecke-equivariant bijection between automorphic forms on $\GL_2$ and its twisted forms. In our setting, this boils down to a correspondence between classical modular forms and quaternionic modular forms as stated below. 

\begin{proposition}[Jacquet--Langlands]\label{prop:JL}
For each factorization $N=N^+N^-$ as above, there is a $\mathbb T_{N^+,N^-}$-equivariant isomorphism (uniquely determined up to scaling)
\[
\mathrm{JL}: S_k(\Gamma_0(N^{-}N^{+}))^{(N^{-})-new} \quad \stackrel{\simeq}{\longrightarrow} \quad S_k(X_{N^+,N^-}).
\]
In particular, to each eigenform $f \in S_k(\Gamma_0(N^{-}N^{+}))^{(N^{-})-new}$ there corresponds a unique quaternionic form $f^B = \mathrm{JL}(f) \in S_k(X_{N^+,N^-})$ having the same Hecke eigenvalues as $f$ for the good Hecke operators $T_{\ell}$ ($\ell\nmid N$) and the Atkin--Lehner involutions $W_q$.
\end{proposition}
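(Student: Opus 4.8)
The plan is to deduce the stated isomorphism of Hecke modules from the Jacquet--Langlands correspondence for automorphic representations, by unwinding the adelic descriptions of both spaces of modular forms. First I would pass from the classical/quaternionic modular forms of weight $k$ to automorphic forms on the relevant adele groups. On the classical side, an eigenform $f \in S_k(\Gamma_0(N))^{(N^-)\text{-new}}$ generates an automorphic representation $\pi_f = \bigotimes_v \pi_{f,v}$ of $\mathrm{GL}_2(\mathbb{A}_{\mathbb{Q}})$, cuspidal, with $\pi_{f,\infty}$ the discrete series of weight $k$, and with the $(N^-)$-newness condition translating into the statement that for each prime $q \mid N^-$ the local component $\pi_{f,q}$ is either the Steinberg representation (up to unramified twist) or a ramified principal series / supercuspidal — in any case, a \emph{discrete series} representation of $\mathrm{GL}_2(\mathbb{Q}_q)$, which is precisely the condition that $\pi_{f,q}$ is in the image of the local Jacquet--Langlands correspondence from $B_q^\times$. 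On the quaternionic side, using the adelic description \eqref{XR} of $X_{N^+,N^-}$ and the definition of $S_k(X_{N^+,N^-})$ via conditions i)--ii), a quaternionic eigenform generates an automorphic representation $\pi' = \bigotimes_v \pi'_v$ of $B^\times(\mathbb{A}_{\mathbb{Q}})$ whose archimedean component is the weight-$k$ representation of $B_\infty^\times \simeq \mathrm{GL}_2(\mathbb{R})$ cut out by the operators $W_\infty, X_\infty$, and whose finite components are the $\hat{\mathcal{R}}^\times$-fixed vectors.

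Next I would invoke the global Jacquet--Langlands correspondence: there is a bijection $\pi \mapsto \pi^B$ between cuspidal automorphic representations of $\mathrm{GL}_2(\mathbb{A}_{\mathbb{Q}})$ that are discrete series at every place ramifying $B$ (i.e. at the primes dividing $N^-$; recall $B$ is split at $\infty$) and automorphic representations of $B^\times(\mathbb{A}_{\mathbb{Q}})$, characterized by $\pi^B_v \cong \pi_v$ for all $v \nmid N^-$ and $\pi^B_q = \mathrm{JL}_q(\pi_q)$ for $q \mid N^-$. Then I would match level structures: for $\ell \nmid N$ the space of $\mathrm{GL}_2(\mathbb{Z}_\ell)$-fixed vectors in $\pi_\ell$ is one-dimensional on both sides (and the same, since $\pi_\ell \cong \pi^B_\ell$), for $q \mid N^+$ the Eichler order of level $q^{\mathrm{val}_q(N)}$ picks out the same line of fixed vectors under the local identification $B_q \cong \mathrm{M}_2(\mathbb{Q}_q)$, and for $q \mid N^-$ the local representation $\mathrm{JL}_q(\pi_q)$ of the nonsplit $B_q^\times$ has a (unique up to scalar) vector fixed by $\mathcal{O}_{B,q}^\times$ exactly when $\pi_q$ is an unramified twist of Steinberg — but the $(N^-)$-newness of $f$ (with $N^-$ exactly dividing $N$, square-free) forces precisely this. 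Summing the local multiplicities gives that $S_k(X_{N^+,N^-})$ and $S_k(\Gamma_0(N))^{(N^-)\text{-new}}$ have the same dimension and the same systems of Hecke eigenvalues for $T_\ell$ ($\ell \nmid N$); picking a basis of eigenforms and sending $f \mapsto f^B$ then defines $\mathrm{JL}$, which is $\mathbb{T}_{N^+,N^-}$-equivariant because the Hecke operators $T_\ell$ and $W_q$ act on the line $(\pi_v)^{K_v}$ through scalars intrinsic to the local representation $\pi_v \cong \pi^B_v$; since the prime-to-$N$ Hecke eigenvalues determine a newform uniquely by strong multiplicity one, the map is well-defined and bijective, and is canonical up to the overall scaling of each $f^B$.

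The compatibility with the Atkin--Lehner involutions $W_q$ deserves a word: for $q \mid N^+$ this is the reversal-of-orientation involution on chains in the Bruhat--Tits tree, which corresponds under the local identification to the action of the Atkin--Lehner element, and the fact that $\pi_q \cong \pi^B_q$ as $\mathrm{GL}_2(\mathbb{Q}_q)$-representations makes the eigenvalues agree; for $q \mid N^-$ the local space \eqref{localdoublecoset} has two elements and $W_q$ acts on the one-dimensional space of $\mathcal{O}_{B,q}^\times$-fixed vectors in $\mathrm{JL}_q(\pi_q)$ by a sign, which the local Jacquet--Langlands correspondence matches with the Atkin--Lehner eigenvalue of $\pi_q$ (equivalently, with $-a_q(f)$ when $\pi_q$ is Steinberg).

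The main obstacle I anticipate is purely bookkeeping rather than conceptual: one must carefully line up the classical normalizations (the differential operators $W_\infty, X_\infty$ and the $\Gamma_0(N)$-level conditions) with the adelic language so that the local-at-$q$ matching for $q \mid N^-$ really does reproduce the $(N^-)$-new subspace and not some larger or smaller space, and so that the Hecke operators defined tree-theoretically in the text coincide with the standard double-coset Hecke operators. Since all of this is classical and due to the cited references \cite{ShimuraClassFields, DeligneShimura} together with the work of Jacquet--Langlands, in the paper itself I would simply state the result with a reference and only indicate the translation of the newness condition into the local discrete-series condition at the primes of $N^-$, which is the one point a reader genuinely needs.
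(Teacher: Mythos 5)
The paper gives no proof of this proposition: it is quoted as the classical Jacquet--Langlands correspondence, and your outline is exactly the standard automorphic argument underlying that citation (pass to adelic automorphic representations, apply the global transfer characterized by local discrete-series conditions at $q\mid N^-$, match fixed vectors under the local level structures, and invoke strong multiplicity one), so your proposal is correct and consistent with what the paper does. One small correction to your local analysis: a ramified principal series is \emph{not} a discrete series, so your first parenthetical list of possibilities for $\pi_{f,q}$ at $q\mid N^-$ is inaccurate as stated; however, since $N^-$ is square-free and coprime to $N^+$, each such $q$ exactly divides $N$ and the nebentypus is trivial, which forces $\pi_{f,q}$ to be an unramified quadratic twist of Steinberg --- precisely the case you use later, whose transfer to $B_q^{\times}$ is a character trivial on $\mathcal{O}_{B,q}^{\times}$, so the level-matching and Atkin--Lehner sign comparison go through as you describe.
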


Let $F$ be a subfield of $\CC$, and write $S_k(\Gamma_0(N^{-}N^{+}),F) \subseteq S_k(\Gamma_0(N^{-}N^{+}))$ for the subspace of modular forms whose Fourier coefficients generate a subfield of $F$. The isomorphism $\mathrm{JL}$ above is compatible with Galois action, and hence  $S_k(X_{N^+,N^-},F) := \mathrm{JL}(S_k(\Gamma_0(N^{-}N^{+}),F)^{(N^{-})-new})$ must be regarded as the subspace of weight $k$ modular forms on $X_{N^+,N^-}$ which are defined over $F$, although such modular forms have no Fourier expansion. The Jacquet--Langlands correspondence then restricts to an isomorphism 
\[
\mathrm{JL}: S_k(\Gamma_0(N^{-}N^{+}),F)^{(N^{-})-new} \quad \stackrel{\simeq}{\longrightarrow} \quad S_k(X_{N^+,N^-},F).
\]

We can also reformulate the above Jacquet--Langlands correspondence in the following way. Suppose $f_{\infty} \in S_k(\Gamma_0(N))^{\mathrm{new}}$ is a normalized newform, which is an eigenform for the Hecke operators $T_{\ell}$, for $\ell\nmid N$, and the Atkin--Lehner involutions $W_q$, for $q\mid N$ (here, $W_q$ stands for the $W$-operator corresponding to the $q$-primary part $Q = q^a$ of $N$ as in \cite{AtkinLi}, which induces an involution on $S_k(\Gamma_0(N))$). Then we have $T_{\ell}f_{\infty} = a_{\ell}f_{\infty}$ for every $\ell \nmid N$ and $W_qf_{\infty} = \varepsilon_{q,f_{\infty}} f_{\infty}$ for every $q\mid N$, where $a_{\ell}= a_{\ell}(f_{\infty})$ stands for the $\ell$-th Fourier coefficient of $f_{\infty}$ and $\varepsilon_{q,f_{\infty}}=\pm 1$ is the eigenvalue of the Atkin--Lehner involution acting on $f_{\infty}$. If $\mathrm{val}_q(N) \geq 2$, then we have $a_q = 0$, whereas if $\mathrm{val}_q(N) = 1$ (in particular, for primes $q\mid N^-$), then the eigenvalue $\varepsilon_{q,f_{\infty}}$ is related to the $q$-th Fourier coefficient by the identity $a_q = -\varepsilon_{q,f_{\infty}}q^r$ (recall that according to our notations $k=2r+2$).

Let $F$ be the number field generated by the Fourier coefficients $a_n=a_n(f_{\infty})$ of $f_{\infty}$, which lie acually in its ring of integers $\mathcal O_F$. Then the Jacquet--Langlands correspondence asserts that for each factorization $N = N^+N^-$ as above there exists a (unique) algebra homomorphism 
\[
\phi_{N^+,N^-}: \mathbb T_{N^+,N^-} \to \mathcal O_F
\]
such that $\phi_{N^+,N^-}(T_{\ell}) = a_{\ell}$ if $\ell\nmid N$ and $\phi_{N^+,N^-}(W_q) = \varepsilon_{q,f_{\infty}}$ if $q \mid N$.

In particular, at primes $q\mid N^-$ (and also at primes $q$ dividing exactly $N^+$) we recover the $q$-th Fourier coefficient of $f_{\infty}$ as $a_q = -\phi_{N^+,N^-}(W_q)q^r$. Also, notice that the eigenvalue $w_{N,1}:= \phi_{N,1}(W_N)$ of the {\em Fricke involution} $W_N$ acting on $f_{\infty}$ coincides with $w_{N^+,N^-}:=\phi_{N^+,N^-}(W_N)$ (by a slight abuse of notation we write $W_N$ for both the Fricke involution acting on $f_{\infty}$ and the one acting on $f$, namely the product of all the involutions $W_q$ for primes $q\mid N$).

\subsection{Kuga--Sato varieties}

We fix from now on a factorization $N=N^+N^-$ as in the previous section, and denote by $\mathcal R=\mathcal R_{N^+,N^-}$ an Eichler order of level $N^+$ in the indefinite rational quaternion algebra of discriminant $N^-$ and by $X = X_{N^+,N^-}$ the corresponding Shimura curve. Let $\pi: \mathcal A \to X^M$ be the universal abelian surface over the Shimura curve $X^M$ as in Section \ref{sec:curves-defns}, where $M\geq 3$ is an auxiliary integer prime to $N$. Thus $\mathcal A/X^M$ is a relative scheme of relative dimension $2$ (and absolute dimension $3$), and for each geometric point $x: \Spec \, L \to X^M$ the fibre $\mathcal A_x := \mathcal A \times_x \Spec \, L$ is an abelian surface with QM by $\mathcal R$ and full level $M$-structure defined over $L$, representing the isomorphism class corresponding to the point $x$.

Let $r\geq 1$ be an integer, and $\mathcal A^r = \mathcal A \times_{X^M} \cdots \times_{X^M} \mathcal A$ be the $r$-th fibered product of $\mathcal A$ over $X^M$. We shall refer to $\mathcal A^r$ as the {\em $r$-th Kuga--Sato variety} over $X^M$. It has relative dimension $2r$ over $X^M$, and absolute dimension $2r+1$. A generic point in $\mathcal A^r$ might be represented as a tuple $(x;P_1,\dots,P_r)$, where $x$ is a point in $X$ and the $P_i$ are points in the fibre $\mathcal A_x$.

Suppose $\ell$ is a prime not dividing $MN$. Then we define the action of the Hecke operator $T_{\ell}$ on the Kuga--Sato variety $\mathcal A^r$ as follows. Let $X^{M, \ell}$ be the Shimura curve classifying triples $(A, \iota, C[\ell])$, where $(A,\iota)$ is a QM abelian surface parametrized by $X^M$, further endowed with a subgroup $C[\ell]$ of $A[\ell]$ which is stable under the action of $\mathcal R$ (via $\iota$) and cyclic as $\mathcal R$-module ($A[\ell]$ has $\ell+1$ such $\mathcal R$-submodules, all of them of order $\ell^2$). Notice that there is a natural forgetful morphism of Shimura curves $X^{M,\ell} \to X^M$. The fibre product $\mathcal A_{\ell} := \mathcal A \times_{X^M} X^{M,\ell}$ is then the universal abelian surface over $X^{M,\ell}$, equipped with a subgroup scheme $\mathcal C[\ell]$ of order $\ell^2$, which is also a module for the induced action of $\mathcal R$. Let $\mathcal Q$ denote the quotient of $\mathcal A_{\ell}$ by the subgroup scheme $\mathcal C[\ell]$, with level structure induced from $\mathcal A_{\ell}$. Write also $\mathcal A_{\ell}^r$ and $\mathcal Q^r$ for the respective $r$-th fibered products over $X^{M,\ell}$. Then the first and third squares in the following diagram are cartesian:

\[
\xymatrix{
\mathcal A^r \ar[d] & \ar[l]_{\phi_1} \ar[r]^{\phi_2} \ar[d]  \mathcal A^{r,\ell} & \ar[d] \ar[r]^{\phi_3} \mathcal Q^r  & \mathcal A^r\ar[d]\\
X^M & X^{M,\ell} \ar@{=}[r] \ar[l] & X^{M,\ell}  \ar[r] & X^M}
\]

By using this diagram, the Hecke operator $T_{\ell}$ acting on $\mathcal A^r$ can be defined then as the correspondence $T_{\ell} = \phi_{1}^{*} \circ \phi_{2*} \circ \phi_{3*}$. Such a correspondence induces an endomorphism, which we still denote $T_{\ell}$, on \'etale cohomology groups $\mathrm H^*_{\et}(\mathcal A^r \times \bar{\QQ}, -)$.

\subsection{$p$-adic Galois representations attached to modular forms}\label{sec:repns}

Let $f_{\infty} \in S_k(\Gamma_0(N),\bar{\QQ})$ be a (normalized) newform of even weight $k=2r+2$ ($r\geq 1$) and level $N=N^+N^-$ as before. Let also $f = \mathrm{JL}(f_{\infty}) \in S_k(X,\bar{\QQ})$ be the corresponding newform on $X$ under the Hecke-equivariant isomorphism from Proposition \ref{prop:JL}. Let $F =  \QQ(\{a_n\})$ be the number field generated by the Fourier coefficients $a_n = a_n(f_{\infty})$ of $f_{\infty}$, which actually lie in its ring of integers $\mathcal O_F$.

Let $p$ be a rational prime not dividing $N$. Associated with $f_{\infty}$, there is a free $\mathcal O_F \otimes_{\ZZ} \ZZ_p$-module $V_{\infty} = V(f_{\infty})$ of rank $2$ equipped with a continuous $\mathcal O_F$-linear action of $G_{\QQ} = \Gal(\bar{\QQ}/\QQ)$. Indeed, the Galois representation $V_{\infty}$ arises as a factor of the middle \'etale cohomology of the Kuga--Sato variety obtained as the $2r$-fold fibre product of (a suitable smooth compactification of) the universal elliptic curve $\mathcal E$ (with full level $N$-structure) over the modular curve $X(N)$ (cf. \cite{scholl1990motives}). A similar construction is available for $f = \mathrm{JL}(f_{\infty})$, by considering the universal abelian surface $\pi: \mathcal A \to X^M$ as a replacement for $\mathcal E \to X(N)$. We sketch below this construction, which is based on previous work of Besser \cite{BesserCycles} and follows the approach taken later in Iovita--Spie{\ss} \cite{IovitaSpiess}.

The action of $\mathcal R$ on $\mathcal A$ induces an action of $B^{\times}$ on $R^q\pi_*\QQ_p$, for $q\geq 1$. Then one defines a $p$-adic sheaf 
\[
 \mathbb L_2 := \bigcap_{b\in B} \ker(b-\text{n}(b): R^2\pi_*\QQ_p \to R^2\pi_*\QQ_p),
\]
which is a $3$-dimensional local system on $X^M$. Then the non-degenerate pairing 
\[
 (\, , \, ): \mathbb L_2\otimes \mathbb L_2 \hookrightarrow R^2\pi_*\QQ_p\otimes R^2\pi_*\QQ_p \stackrel{\cup}{\to} R^4\pi_*\QQ_p \stackrel{tr}{\to} \QQ_p(-2)
\]
induces a Laplacian operator 
\[
 \Delta_r: \Sym^r\mathbb L_2 \, \longrightarrow \, (\Sym^{r-2}\mathbb L_2)(-2),
\]
and $\mathbb L_{2r} \subseteq \Sym^r\mathbb L_2$ is defined as the kernel of $\Delta_r$.

The $p$-adic $G_{\QQ}$-representation attached to the space $S_k(X,\bar{\QQ})$ of weight $k$ modular forms on $X$ is then by definition (cf. \cite[Definition 5.6]{IovitaSpiess})
\begin{equation}\label{Repn}
 \mathrm H^1_{\et}(X^M \times \bar{\QQ}, \mathbb L_{2r})^{G(M)} = p_{G(M)} \mathrm H^1_{\et}(X^M \times \bar{\QQ}, \mathbb L_{2r}),
\end{equation}
where we take $G(M)$-invariants by applying the projector 
\[
p_{G(M)} := \frac{1}{|G(M)|} \sum_{g \in G(M)} g \in \QQ[G(M)],
\]
regarded as a correspondence in $\Corr_X(\mathcal A^r,\mathcal A^r)$ (notice that there is an action of $G(M)$ on $\mathbb L_{2r}$, compatible with the action on $X^M$). This makes $\mathrm H^1_{\et}(X^M \times \bar{\QQ}, \mathbb L_{2r})^{G(M)}$ independent of the choice of $M$, and one further has
\[
\mathrm H^1_{\et}(X^M \times \bar{\QQ}, \mathbb L_{2r})^{G(M)} \simeq p_{G(M)} \varepsilon_{2r} \mathrm H_{\et}^{2r+1}(\mathcal A^r \times \bar{\QQ}, \QQ_p) = p_{G(M)} \varepsilon_{2r} \mathrm H_{\et}^{*}(\mathcal A^r \times \bar{\QQ}, \QQ_p),
\]
where $\varepsilon_{2r}$ is a suitable projector in $\Corr_{X^M}(\mathcal A^r,\mathcal A^r)$ encoding the construction of $\mathbb L_{2r}$. Also, we may remark that the Galois representation in \eqref{Repn} arises as the $p$-adic \'etale realization of a Chow motive $\mathcal M_{2r}$ (see \cite[Appendix 10.1]{IovitaSpiess}).

The construction sketched above can be adapted to work with $\ZZ_p$-coefficients, and one can even consider cohomology with finite coefficients, provided that we assume that $p$ does not divide $N\cdot(2r)!$ and we choose the auxiliary level $M$ so that $p$ does not divide $|G(M)|$ (hence the projector $p_{G(M)}$ is well-defined in $\ZZ_p[G(M)]$ and $\ZZ/p^m\ZZ[G(M)]$). Write $\mathcal L_{2r,m}$ for the sheaf constructed as $\mathbb L_{2r}$ but with ring of coefficients $\ZZ/p^m\ZZ$, and let $\mathcal L_{2r} := \varprojlim \mathcal L_{2r,m}$ be the corresponding $p$-adic sheaf.

\begin{lemma}\label{lemma:etalecoh-torsionfree}
 With the previous notations, $\H^1_{\et}(X^M \times \bar{\QQ}, \mathcal L_{2r})$ is torsion free and 
 \[
  \H^1_{\et}(X^M \times \bar{\QQ}, \mathcal L_{2r,m}) = \H^1_{\et}(X^M \times \bar{\QQ}, \mathcal L_{2r})/p^m.
 \]
\end{lemma}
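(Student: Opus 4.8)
The plan is to prove this by reducing, via the long exact cohomology sequence, to showing that $\H^0_{\et}(X^M \times \bar{\QQ}, \mathcal L_{2r,m})$ behaves as expected, i.e.\ that the relevant $\H^0$ vanishes (or is at least $p$-torsion-free in a way compatible with the reduction maps), and then invoking the fact that $X^M$ is a smooth projective curve so that $\H^2$ with these (locally constant, finite) coefficients is controlled by Poincar\'e duality. Concretely, from the Kummer-type short exact sequence of \'etale sheaves $0 \to \mathcal L_{2r} \xrightarrow{p^m} \mathcal L_{2r} \to \mathcal L_{2r,m} \to 0$ (using that $\mathcal L_{2r}$ is $p$-torsion-free as a sheaf, which holds because it is an inverse limit of the $\mathcal L_{2r,m}$ and is cut out of $\Sym^r$ of a $\ZZ_p$-local system by the projectors $\varepsilon_{2r}$, $p_{G(M)}$, all defined over $\ZZ_p$ once $p \nmid N\cdot(2r)!\cdot|G(M)|$), one gets the exact sequence
\[
\H^0_{\et}(X^M\times\bar\QQ,\mathcal L_{2r,m}) \to \H^1_{\et}(X^M\times\bar\QQ,\mathcal L_{2r}) \xrightarrow{p^m} \H^1_{\et}(X^M\times\bar\QQ,\mathcal L_{2r}) \to \H^1_{\et}(X^M\times\bar\QQ,\mathcal L_{2r,m}) \to \H^2_{\et}(X^M\times\bar\QQ,\mathcal L_{2r})[p^m].
\]
So both assertions of the lemma follow at once if I show: (a) $\H^0_{\et}(X^M\times\bar\QQ,\mathcal L_{2r,m}) = 0$ for all $m$ (this kills the first map and forces multiplication by $p^m$ on $\H^1$ of $\mathcal L_{2r}$ to be injective, hence $\H^1$ of $\mathcal L_{2r}$ is torsion-free, and gives the short exact sequence $0 \to \H^1(\mathcal L_{2r})/p^m \to \H^1(\mathcal L_{2r,m}) \to \H^2(\mathcal L_{2r})[p^m] \to 0$); and (b) $\H^2_{\et}(X^M\times\bar\QQ,\mathcal L_{2r})$ is torsion-free, so the right-hand term vanishes and the displayed isomorphism of the lemma drops out.

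For step (a), I would argue on each geometrically connected component $Y$ of $X^M\times\bar\QQ$ separately: $\H^0_{\et}(Y,\mathcal L_{2r,m})$ is the subgroup of $\pi_1(Y)$-invariants of the finite free $\ZZ/p^m\ZZ$-module that is the stalk $M_{2r,m}$ of $\mathcal L_{2r,m}$. The monodromy representation factors through the action of (a finite-index subgroup of) $B^\times$-units, i.e.\ through the relevant arithmetic group $\Gamma^M \subseteq \SL_2(\RR)$ (Remark \ref{rmk:XC}), acting on $\ZZ/p^m\ZZ$ via the standard $2r$-dimensional piece $\mathcal L_{2r}$ of $\Sym^r$ of the rank-$2$ standard representation. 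The stalk $\mathbb{L}_2$ is (a twist of) the standard $2$-dimensional representation tensored up appropriately, so $\mathcal L_{2r,m}$ is the reduction mod $p^m$ of the irreducible $\Sym^{2r}$-type lattice. Since $\Gamma^M$ contains (the image of) a full congruence subgroup and hence an element conjugate to a nontrivial unipotent together with sufficiently generic semisimple elements, its image in $\GL_2(\ZZ/p^m\ZZ)$ is large enough — using $p \nmid (2r)!$ so that the $\Sym$-construction stays irreducible mod $p$ — that the only invariant vector is $0$. This is the same input used in the weight-$k$ Eichler--Shimura / Scholl setup to see that $\H^0$ and $\H^{2}$ of the Kuga--Sato local systems carry no interesting (in particular no torsion) classes; I would cite that circle of ideas (Scholl \cite{scholl1990motives}, and the analogous statement in Besser \cite{BesserCycles}/Iovita--Spie\ss{} \cite{IovitaSpiess}) rather than redo it.

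For step (b), Poincar\'e duality on the smooth projective curve $X^M\times\bar\QQ$ identifies $\H^2_{\et}(X^M\times\bar\QQ,\mathcal L_{2r})$ with the dual (Tate-twisted) of $\H^0_{\et}(X^M\times\bar\QQ,\mathcal L_{2r}^\vee)$, and the self-duality of $\mathcal L_{2r}$ (coming from the pairing $(\,,\,)$ on $\mathbb L_2$ displayed in the text, which survives on $\ZZ_p$-coefficients up to a bounded denominator controlled by $(2r)!$, hence is a perfect pairing since $p\nmid (2r)!$) reduces this to the vanishing of $\H^0_{\et}(X^M\times\bar\QQ,\mathcal L_{2r})$, which is immediate from step (a) (an invariant $\ZZ_p$-vector would reduce to an invariant vector mod $p$). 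Therefore $\H^2_{\et}(X^M\times\bar\QQ,\mathcal L_{2r})$ is $p$-divisible-free — more precisely its torsion is $\H^0(\mathcal L_{2r}^\vee)^\ast_{\mathrm{tors}} = 0$ — so the connecting map above is zero and the lemma follows.

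The main obstacle is step (a): pinning down precisely enough the monodromy/stalk of $\mathcal L_{2r,m}$ to conclude the vanishing of invariants. One has to be careful that $\mathcal L_{2r}$ is not literally $\Sym^{2r}$ of the standard lattice but the piece cut out by the Laplacian $\Delta_r$ on $\Sym^r\mathbb L_2$ (where $\mathbb L_2$ is itself the $B$-invariant part of $R^2\pi_*$), so one must check the integral structure and the irreducibility of the reduction mod $p$ are not disturbed — this is exactly where the hypothesis $p \nmid (2r)!$ enters, guaranteeing that the projectors defining $\mathbb L_{2r}$ and $\varepsilon_{2r}$ are integral and that $\Sym$ stays semisimple. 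Granting the corresponding statement from Besser and Iovita--Spie\ss{} over $\QQ_p$ and upgrading it to $\ZZ_p$/$\ZZ/p^m\ZZ$ coefficients is the technical heart; everything else is formal homological algebra plus Poincar\'e duality on a curve.
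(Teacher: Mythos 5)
Your proposal is correct in outline but follows a genuinely different route from the paper. The paper never invokes Poincar\'e duality nor the Kummer sequence for the limit sheaf: it works entirely at finite level, tensoring $0 \to \ZZ/p^{m-1}\ZZ \to \ZZ/p^m\ZZ \to \ZZ/p\ZZ \to 0$ with $\mathcal L_{2r,m}$, quoting the vanishing $\H^i_{\et}(X^M\times\bar{\QQ},\mathcal L_{2r,m})=0$ for $i=0,2$ from Iovita--Spie{\ss} (Appendix 10) --- which is exactly the finite-coefficient input you call the ``technical heart'', so it is available by citation --- to obtain $0 \to \H^1(\mathcal L_{2r,m-1}) \to \H^1(\mathcal L_{2r,m}) \to \H^1(\mathcal L_{2r,1}) \to 0$; it then shows, by trivializing the sheaves over the covers $X^{Mp^n}$ and using Hochschild--Serre, that the first map is precisely the inclusion of the $p^{m-1}$-torsion, deduces by induction that each $\H^1(\mathcal L_{2r,m})$ is a free $\ZZ/p^m\ZZ$-module of constant rank, and concludes with Milne's Lemma V.1.11. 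Your route buys a shorter, purely homological argument (two vanishing statements plus a five-term exact sequence), at the price of justifying the long exact sequence attached to $0 \to \mathcal L_{2r} \stackrel{p^m}{\to} \mathcal L_{2r} \to \mathcal L_{2r,m} \to 0$ for the $p$-adic (inverse-limit) sheaf --- legitimate here because the finite-level cohomologies are finite, so there is no $\varprojlim^1$ obstruction, but it must be said --- while the paper's route avoids both duality and the limit-sheaf sequence at the cost of the cover/trivialization argument.

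Two local points in your write-up need repair. First, in the monodromy sketch for $\H^0(\mathcal L_{2r,m})=0$: since $B$ is a division algebra, $\Gamma^M$ is cocompact and contains no nontrivial unipotent elements, so ``an element conjugate to a nontrivial unipotent'' is not available; the correct source of largeness of the mod $p^m$ monodromy is strong approximation for the norm-one units of $\mathcal R$ (using $p\nmid NM$), after which irreducibility and nontriviality of $\Sym^{2r}$ of the standard representation mod $p$ (here $p\nmid (2r)!$) kills the invariants --- or one simply cites the finite-coefficient vanishing, as the paper does. Second, your duality step for the limit sheaf is misstated: for $\ZZ_p$-coefficients the torsion of $\H^2$ is dual to the torsion of $\H^1$ of the dual sheaf, not to $\H^0$, so the phrase ``its torsion is $\H^0(\mathcal L_{2r}^{\vee})^{\ast}_{\mathrm{tors}}$'' does not stand as written. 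The fix is to apply Poincar\'e duality at finite level, $\H^2_{\et}(X^M\times\bar{\QQ},\mathcal L_{2r,m}) \simeq \H^0_{\et}(X^M\times\bar{\QQ},\mathcal L_{2r,m}^{D})^{\vee}$ with $\mathcal L_{2r,m}^{D}=\mathcal{H}om(\mathcal L_{2r,m},\mu_{p^m})$, whose $\H^0$ vanishes by the same monodromy argument applied to the dual local system (so no integral self-duality of $\mathcal L_{2r}$ is needed), and then pass to the limit to get $\H^2_{\et}(X^M\times\bar{\QQ},\mathcal L_{2r})=0$, which kills the boundary term in your five-term sequence. With these two repairs your argument is complete.
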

\begin{proof}
First of all, the sheaf $\mathcal L_{2r} = (\mathcal L_{2r,m})$ of $\ZZ_p$-modules is flat,
and hence for every $m > 1$ the exact sequence $0 \longrightarrow \ZZ/p^{m-1}\ZZ \stackrel{p}{\longrightarrow} \ZZ/p^{m}\ZZ \longrightarrow \ZZ/p\ZZ \longrightarrow 0$ induces when tensoring with $\mathcal L_{2r,m}$ a short exact sequence 
\[
0 \longrightarrow \mathcal L_{2r,m-1} \longrightarrow \mathcal L_{2r,m} \longrightarrow \mathcal L_{2r,1} \longrightarrow 0.
\]
By passing to the induced long exact sequence in \'etale cohomology and using that $\H^i_{\et}(X^M \times \bar{\QQ}, \mathcal L_{2r,m}) = 0$ for $i=0,2$ (cf. \cite[Appendix 10]{IovitaSpiess}), one obtains a short exact sequence
\begin{equation}\label{eqn:exactseqH1}
0 \longrightarrow \H^1_{\et}(X^M\times\bar{\QQ},\mathcal L_{2r,m-1}) \longrightarrow \H^1_{\et}(X^M\times\bar{\QQ},\mathcal L_{2r,m}) \longrightarrow \H^1_{\et}(X^M\times\bar{\QQ},\mathcal L_{2r,1}) \longrightarrow 0.
\end{equation}

Secondly, let $X^{Mp^n}/\QQ$ be the Shimura curve with full level $Mp^n$-structure above $X$, for each integer $n\geq 1$. Then the natural forgetful morphism of Shimura curves $X^{Mp^n} \to X^M$ is a Galois cover with Galois group $G(p^n)=\GL_2(\ZZ/p^n\ZZ)/\pm 1$, and we write $\pi_n: \mathcal A_n \to X^{Mp^n}$ for the corresponding universal abelian surface. By a slight abuse of notation, we still write $\mathcal L_{2r,m}$ for the $p$-adic sheaf on $X^{Mp^n}$ constructed in the same way as for $X^M$ but starting with $R^2\pi_{n,*}\ZZ/p^m\ZZ$. Then for each pair of integers $m,n\geq 1$, $\H^1_{\et}(X^M\times\bar{\QQ},\mathcal L_{2r,m})$ is identified with $\H^1_{\et}(X^{Mp^n}\times\bar{\QQ},\mathcal L_{2r,m})^{G(p^n)}$, where the superscript $G(p^n)$ means that we take invariants by $G(p^n)$. Indeed, the Hochschild--Serre spectral sequence for Galois covers in \'etale cohomology (see \cite[III.2.20]{MilneEtaleCohomology}) gives an exact sequence
\[
\H^1(G(p^n),\H^0_{\et}(X^{Mp^n}\times\bar{\QQ},\mathcal L_{2r,m})) \longrightarrow \H^1_{\et}(X^M\times\bar{\QQ},\mathcal L_{2r,m}) \longrightarrow \H^1_{\et}(X^{Mp^n}\times\bar{\QQ},\mathcal L_{2r,m})^{G(p^n)} \longrightarrow
\]
\[
\longrightarrow \H^2(G(p^n),\H^0_{\et}(X^{Mp^n}\times\bar{\QQ},\mathcal L_{2r,m})),
\]
and using now that $\H^0_{\et}(X^{Mp^n} \times \bar{\QQ}, \mathcal L_{2r,m}) = 0$ we obtain an isomorphism of $\ZZ/p^m\ZZ$-modules $\H^1_{\et}(X^M\times\bar{\QQ},\mathcal L_{2r,m}) \simeq \H^1_{\et}(X^{Mp^n}\times\bar{\QQ},\mathcal L_{2r,m})^{G(p^n)}$.

Now the inclusion $\H^1_{\et}(X^M\times\bar{\QQ},\mathcal L_{2r,m-1}) \hookrightarrow \H^1_{\et}(X^M\times\bar{\QQ},\mathcal L_{2r,m})$ from \eqref{eqn:exactseqH1} gives an isomorphism of $\ZZ/p^{m-1}\ZZ$-modules $\H^1_{\et}(X^M\times\bar{\QQ},\mathcal L_{2r,m-1}) \simeq \H^1_{\et}(X^M\times\bar{\QQ},\mathcal L_{2r,m})[p^{m-1}]$. To see this, first notice that for any pair of integers $1 \leq i \leq n$, the stalk of $\mathcal L_{2r,i}$ at any geometric point $x'$ of $X^{Mp^n}$ is isomorphic to the stalk at the image of $x'$ on $X^M$. However, the first one admits a trivialization induced from the trivialization of $R^2\pi_{n,*}\ZZ/p^i\ZZ$ (using the level $p^n$-structure on $\mathcal A_{n,x'}$). In particular, for $n\geq m$ one deduces that the inclusion 
\[
\H^1_{\et}(X^{Mp^n}\times\bar{\QQ},\mathcal L_{2r,m-1}) \hookrightarrow \H^1_{\et}(X^{Mp^n}\times\bar{\QQ},\mathcal L_{2r,m})
\]
in the exact sequence analogous to \eqref{eqn:exactseqH1} for $X^{Mp^n}$ becomes the inclusion of the $p^{m-1}$-torsion submodule $\H^1_{\et}(X^{Mp^n}\times\bar{\QQ},\mathcal L_{2r,m})[p^{m-1}]$ in $\H^1_{\et}(X^{Mp^n}\times\bar{\QQ},\mathcal L_{2r,m})$. By taking $G(p^n)$-invariants, the claimed assertion follows. 

Next we use this last observation to conclude that for all $m\geq 1$, $\H^1_{\et}(X^M\times\bar{\QQ},\mathcal L_{2r,m})$ is a free $\ZZ/p^m\ZZ$-module of rank $t = \dim_{\mathbb F_p}(\H^1_{\et}(X^M\times\bar{\QQ},\mathcal L_{2r,1}))$ (in particular, notice that $t$ is independent on $m$). Indeed, the statement is true for $m=1$, since $\H^1_{\et}(X^M\times\bar{\QQ},\mathcal L_{2r,1})$ is a finite dimensional $\mathbb F_p$-vector space. By induction, if we assume the statement true for $m-1$ and set $\H = \H^1_{\et}(X^M\times\bar{\QQ},\mathcal L_{2r,m})$ then we look at the short exact sequence 
\[
0 \longrightarrow \H[p^{m-1}] \longrightarrow \H \longrightarrow \H /\H[p^{m-1}] \longrightarrow 0
\]
given by \eqref{eqn:exactseqH1}. By the inductive hypothesis, the first term $\H[p^{m-1}] \simeq \H^1_{\et}(X^M\times\bar{\QQ},\mathcal L_{2r,m-1})$ is a free $\ZZ/p^{m-1}\ZZ$-module of rank $t$, and the third term is identified with $\H^1_{\et}(X^M\times\bar{\QQ},\mathcal L_{2r,1})$. Hence $\H^1_{\et}(X^M\times\bar{\QQ},\mathcal L_{2r,m})$ is necessarily a free $\ZZ/p^m\ZZ$-module of rank $t$.

Finally, the Lemma follows now directly by applying \cite[Lemma V.1.11]{MilneEtaleCohomology}.
\end{proof}

Under the running assumptions that $p$ does not divide $N\cdot(2r)!$ and $M$ is chosen such that $p$ does not divide $|G(M)|$ either, set 
\[
 J := p_{G(M)} \left(\mathrm H^1_{\et}(X^M \times \bar{\QQ}, \mathcal L_{2r})(r+1)\right) = \mathrm H^1_{\et}(X^M \times \bar{\QQ}, \mathcal L_{2r})(r+1)^{G(M)}.
\]

Let $\mathbb T=\mathbb T_{N^+,N^-}$ denote the Hecke algebra generated by the good operators $T_{\ell}$ ($\ell \nmid N$) and the Atkin--Lehner involutions $W_q$ ($q\mid N$), which act on $J$ by endomorphisms. Write $\mathbb I_f \subseteq \mathbb T$ for the kernel of the ring homomorphism $\phi: \mathbb T \to \mathcal O_F$ associated with $f$, and let $V = V(f) := \{x \in J: \mathbb I_f x = 0\}$ be the $f$-isotypical component of $J$. Since $f$ is a newform, there is a $\mathbb T[G_{\QQ}]$-equivariant projection $J \to V$ from $J$ to the $f$-isotypical component.

As in \cite[Lemma 5.8]{IovitaSpiess}, $V = V(f)$ is isomorphic to $V_{\infty} = V(f_{\infty})$, both as free $\mathcal O_F \otimes \ZZ_p$-modules of rank $2$ and as $G_{\QQ}$-representations. In particular, \cite[Proposition 3.1]{nekovar1992kolyvagin} applies verbatim for $V = V(f)$. We restate it here for convenience of the reader and for later reference. 

\begin{proposition}\label{prop:V}
\begin{itemize}
\item[i)] $V$ is a free $\mathcal O_F \otimes \ZZ_p$-module of rank $2$, equipped with a continuous $\mathcal O_F$-linear action of $\Gal(\bar{\QQ}/\QQ)$.
\item[ii)] There is a $\Gal(\bar{\QQ}/\QQ)$-equivariant, skew-symmetric pairing $[,]: V \times V \, \, \longrightarrow \, \, \ZZ_p(1)$ such that $[\lambda x, y] = [x, \lambda y]$ for all $x, y \in V$ and $\lambda \in \mathcal O_F\otimes\ZZ_p$, which induces non-degenerate pairings
\[
[,]_s: V/p^sV \times V/p^sV \, \, \longrightarrow \, \, \mu_{p^s}, \quad s\geq 0.
\]
\item[iii)] For each prime $\ell \nmid Np$, the characteristic polynomial of the (arithmetic) Frobenius automorphism $\mathrm{Fr}(\ell)$ acting on $V\otimes \QQ$ is 
\[
\det(1-x\mathrm{Fr}(\ell)\mid V\otimes\QQ) = 1-\frac{a_{\ell}}{\ell^r}x + \ell x^2.
\]
\item[iv)] For each prime $q \mid N$, 
\[
\det(1-x\mathrm{Fr}(q)\mid (V\otimes\QQ)_I) = 1-\frac{a_q}{q^r}x,
\]
with $a_q = 0$ or $-\varepsilon_{q,f}q^r$, where $I=\Gal(\bar{\QQ}_{q}/\QQ_{q}^{\mathrm{ur}})$ is the inertia subgroup and $\varepsilon_{q,f} = \phi_{N^+,N^-}(W_q) \in \{\pm 1\}$ is the eigenvalue of the Atkin--Lehner involution $W_q$ acting on $f$.
\end{itemize}
\end{proposition}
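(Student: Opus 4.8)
The plan is to deduce all four assertions from the corresponding statements for $V_{\infty} = V(f_{\infty})$, which are exactly \cite[Proposition 3.1]{nekovar1992kolyvagin}, once we know that $V = V(f)$ and $V_{\infty}$ are isomorphic as $\mathcal O_F \otimes \ZZ_p$-modules equipped with a continuous $G_{\QQ}$-action. Establishing this isomorphism is the first and main step. Both $V$ and $V_{\infty}$ are cut out, by the same Hecke eigensystem $\phi$, from the middle-degree \'etale cohomology of a Kuga--Sato variety (the $2r$-fold fibre product of the universal elliptic curve over $X(N)$ in one case, the $r$-th Kuga--Sato variety $\mathcal A^r$ over $X^M$ in the other), and the Jacquet--Langlands correspondence of Proposition \ref{prop:JL} matches the Hecke data on the two sides. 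Comparing traces of Frobenius at good primes and invoking Chebotarev shows that the two representations have isomorphic semisimplifications; irreducibility of $V\otimes\QQ$ (and of the residual representation), together with the freeness of $J$ as a $\ZZ_p$-module supplied by Lemma \ref{lemma:etalecoh-torsionfree}, upgrades this to an honest isomorphism of $\mathcal O_F\otimes\ZZ_p[G_{\QQ}]$-modules of rank $2$. This is precisely the content of \cite[Lemma 5.8]{IovitaSpiess}, which we quote.

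Granting this, part (i) is immediate. For part (ii) I would either transport Nekov\'a\v{r}'s pairing through the isomorphism of the first step, or construct it directly: Poincar\'e duality furnishes a perfect cup-product pairing on $\H^{2r+1}_{\et}(\mathcal A^r\times\bar{\QQ},\ZZ_p)$ with values in $\ZZ_p(-(2r+1))$, and this pairing is alternating because the cohomological degree $2r+1$ is odd. It is compatible with the idempotents $p_{G(M)}$ and $\varepsilon_{2r}$ and with the Hecke projector onto the $f$-isotypic part (the good Hecke operators and the Atkin--Lehner involutions generating $\mathcal O_F$ being self-adjoint for the cup product), so it descends to a perfect skew-symmetric pairing on $J$ and then on $V$; after the Tate twist by $r+1$ built into the definition of $V$ it takes values in $\ZZ_p\big(2(r+1)-(2r+1)\big)=\ZZ_p(1)$. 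The relation $[\lambda x, y] = [x,\lambda y]$ holds because the operators generating $\mathcal O_F\otimes\ZZ_p$ act self-adjointly, and non-degeneracy of the reductions $[,]_s$ modulo $p^s$ follows from the torsion-freeness of $\H^1_{\et}(X^M\times\bar{\QQ},\mathcal L_{2r})$ in Lemma \ref{lemma:etalecoh-torsionfree} together with the assumption $p\nmid |G(M)|$.

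Parts (iii) and (iv) are the local statements. For (iii), at a good prime $\ell\nmid Np$ the Hecke correspondence $T_{\ell}$ on $\mathcal A^r$ defined via the cartesian diagram of Section \ref{sec:repns} satisfies, modulo $\ell$, an Eichler--Shimura congruence relation involving $\mathrm{Fr}(\ell)$ on the relevant \'etale cohomology; restricting to the $f$-isotypic piece $V$ and keeping track of the twist by $r+1$, this translates into $\det(1-x\,\mathrm{Fr}(\ell)\mid V\otimes\QQ) = 1-\tfrac{a_{\ell}}{\ell^{r}}x+\ell x^2$, using $\phi(T_{\ell})=a_{\ell}$. For (iv), at a prime $q\mid N$ one computes the inertia-coinvariants $(V\otimes\QQ)_I$: the local representation is, up to twist, the one attached to $f_{\infty}$ at $q$, namely a Steinberg-type representation when $\mathrm{val}_q(N)=1$ (whence $(V\otimes\QQ)_I$ is one-dimensional with $\mathrm{Fr}(q)$ acting by a sign matching $a_q=-\varepsilon_{q,f}q^r$) and a ramified principal series or supercuspidal when $\mathrm{val}_q(N)\geq 2$ (whence $(V\otimes\QQ)_I=0$ and $a_q=0$). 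The most economical route for both (iii) and (iv) is in fact to transport the statements from $V_{\infty}$ through the Galois-equivariant isomorphism of the first step; a fully direct verification of (iv) would require invoking the bad-reduction geometry of $X$ (\v{C}erednik--Drinfeld uniformization at $q\mid N^-$) and is not needed. Accordingly, the only genuine obstacle is the first step --- pinning down the integral, Hecke- and Galois-equivariant identification $V(f)\simeq V(f_{\infty})$ --- and the remaining assertions are formal consequences of it together with \cite[Proposition 3.1]{nekovar1992kolyvagin}.
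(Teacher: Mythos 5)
Your main route---quoting \cite[Lemma 5.8]{IovitaSpiess} for the Hecke- and Galois-equivariant identification $V(f)\simeq V(f_{\infty})$ and then transporting all four assertions from \cite[Proposition 3.1]{nekovar1992kolyvagin}---is exactly what the paper does, which states that once this isomorphism is in place Nekov\'a\v{r}'s proposition applies verbatim to $V=V(f)$. The additional direct arguments you sketch (Poincar\'e duality for the pairing, Eichler--Shimura and bad-reduction geometry for the local statements) are not needed and are not used in the paper, as you yourself note.
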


The above Proposition tells us that $V$ is a higher weight analogue of the Tate module of an elliptic curve, in which the Weil pairing is now replaced by the skew-symmetric pairing $[,]$ from ii). On the other hand, the isomorphism between $V$ and $V_{\infty}$ (both as $\mathcal O_F\otimes \ZZ_p$-modules and as $G_{\mathbb Q}$-representations) implies that $V\otimes \QQ$ is the $p$-adic realization of the motive $M_f:=M_{f_{\infty}}$ over $\QQ$ with coefficients in $F$ associated with $f_{\infty}$ (cf. \cite[p. 103]{nekovar1992kolyvagin} and \cite{scholl1990motives}). Proposition \ref{prop:V} then asserts that $M_f^{\vee} = M_f(-1)$ and 
\begin{equation}\label{eqn:Lf}
L(M_f^{\vee},s) = L(f_{\infty},s+r) = \sum_{n=1}^{\infty} a_n n^{-s-r}.
\end{equation}
According to the functional equation satisfied by the $L$-series $L(f_{\infty},s)$ (see \cite[Theorem 3.66]{shimura1971introduction}), we see that defining $\Lambda(s) := N^{s/2}(2\pi)^{-s-r}\Gamma(s+r)L(M_f^{\vee},s)$, the following relation holds 
\[
\Lambda(s) = \varepsilon \Lambda(2-s),
\]
with $\varepsilon = (-1)^{r+1} w_{N,1} = (-1)^{r+1} w_{N^+,N^-}$, where $w_{N,1}=\phi_{N,1}(W_N)$ (resp. $w_{N^+,N^-}=\phi_{N^+,N^-}(W_N)$) is the eigenvalue of the Fricke involution $W_N$ acting on $f_{\infty}$ (resp. $f = \mathrm{JL}(f_{\infty})$). In the remaining of this note, we will also write $L(f,s)$ for the complex $L$-series in \eqref{eqn:Lf}.

\begin{remark}
Working with the alternative moduli interpretation for the Shimura curve $X$ proposed in Remark \ref{rmk:genus2}, we could have considered the {\em Kuga--Sato variety} over $X^M$ obtained as the $2r$-th fibered self-product $\mathcal C^{2r}$ of the universal genus 2 curve $\mathcal C$ over $X^M$. The cohomologies of $\mathcal C^{2r}$ and $\mathcal A^r$ are closely related, and one could realize $V=V(f)$ as a factor in the middle \'etale cohomology of $\mathcal C^{2r}$ similarly as we did by using $\mathcal A^{r}$. When dealing with CM (or Heegner) cycles as defined in the next section, we found no advantage in taking this alternative approach (cf. Remark \ref{rmk:genus2CM}), although we believe it could be useful when dealing with diagonal cycles in products of Kuga--Sato varieties over Shimura curves along the lines of \cite{DarmonRotger}.
\end{remark}

\section{Algebraic cycles and $p$-adic \'etale Abel--Jacobi map}

In this section we will introduce and explore special algebraic cycles on the Kuga--Sato variety $\mathcal A^r$, which will be referred to as {\em CM} (or {\em Heegner}) {\em cycles} as they lie above CM (or Heegner) points on the Shimura curve $X$. Their definition resembles the well-known construction of CM cycles on Kuga--Sato varieties above classical modular curves, since they are essentially obtained from the CM cycles in the QM-abelian surfaces parametrized by CM points on the Shimura curve $X$ (cf. Section \ref{sec:Shimuracurves}). The images of these CM cycles under suitable $p$-adic \'etale Abel--Jacobi maps will give rise to a system of Galois cohomology classes that will be the input for Kolyvagin's method to bound the Selmer group, so let us first recall how these Abel--Jacobi maps are defined.

\subsection{$p$-adic \'etale Abel--Jacobi map}\label{sec:AJmap}

Let $W$ be a smooth proper irreducible variety of dimension $d$ defined over a field $K$, and let $\CH^c(W/K)$ be the Chow group of rational equivalence classes of codimension $c$  (algebraic) cycles in $W$ defined over $K$. If $\overline W$ denotes the base change of $W$ to a fixed algebraic closure $\bar K$ of $K$, the $p$-adic \'etale cycle class map 
\[
\CH^c(W/K) \, \, \longrightarrow \, \, \H^{2c}_{\et}(W,\ZZ_p(c))
\]
described in \cite{jannsen1988continuous} gives rise to a map 
\[
\mathrm{cl}: \CH^c(W/K) \, \, \longrightarrow \, \, \H^{2c}_{\et}(\overline W,\ZZ_p(c)).
\]
Here, $\H_{\et}^*$ stands for continuous \'etale cohomology, and we use the usual convention for Tate twists: if $M$ is a $\ZZ_p[G_{\QQ}]$-module and $c$ is an integer, then $M(c) = M \otimes \chi_p^c$, where $\chi_p: G_{\QQ} \to \ZZ_p^{\times}$ is the $p$-adic cyclotomic character.

A cycle whose class in $\CH^c(W/K)$ is in the kernel of $\mathrm{cl}$ is called {\em null-homologous}, and we write $\CH^c(W/K)_0 := \ker(\mathrm{cl})$ for the group of all such (classes of) cycles. The Hochschild--Serre spectral sequence (see \cite{jannsen1988continuous}) relating continuous \'etale cohomology with continuous Galois cohomology of $G_K = \Gal(\bar K/K)$, 
\[
\H^i(K, \H^j_{\et}(\overline W,\ZZ_p(c))) \Rightarrow \H^{i+j}_{\et}(W,\ZZ_p(c)),
\]
gives rise to the so-called ($c$-th) $p$-adic \'etale Abel--Jacobi map on $W$ over $K$
\[
\mathrm{AJ}_p: \CH^c(W/K)_0 \, \, \longrightarrow \, \, \H^1(K,\H_{\et}^{2c-1}(\overline W,\ZZ_p)(c)).
\]

For our purposes in this note we are interested in considering $W = \mathcal A^r$, the $r$-th Kuga--Sato variety over $X^M$, where $k=2r+2$ is the weight of our starting cuspidal newform $f_{\infty}$ of level $N$, and we still keep our fixed factorization $N=N^+N^-$. Then the $(r+1)$-th $p$-adic Abel-Jacobi map for $\mathcal A^r$ over a field $K$ of characteristic zero takes the form 
\begin{equation}\label{AJpKugaSato}
\mathrm{AJ}_p: \mathrm{CH}^{r+1}(\mathcal A^r/K)_0 \, \, \longrightarrow \, \, \H^1(K, \H^{2r+1}_{\et}(\mathcal A^r\times_K \bar K,\ZZ_p)(r+1)).
\end{equation}
Since the Abel--Jacobi map commutes with automorphisms of the underlying variety $\mathcal A^r$, by applying the projectors $\varepsilon_{2r}$ and $p_{G(M)}$ we see that $\mathrm{AJ}_p$ induces a map 
\[
\varepsilon_{2r}(\CH^{r+1}(\mathcal A^r/K)_0\otimes_{\ZZ}\ZZ_p) \, \, \longrightarrow \, \, \mathrm \H^1(K, J).
\]
But now notice that $\varepsilon_{2r}\mathrm H^{2r+2}_{\et}(\mathcal A^r\times_K \bar K,\ZZ_p)(r+1)) = 0$ (see \cite[Lemma 10.1]{IovitaSpiess}), or in other words, the target of the cycle class map on $\CH^{r+1}(\mathcal A^r/K)$ vanishes after applying $\varepsilon_{2r}$. Therefore, 
\[
\varepsilon_{2r}(\CH^{r+1}(\mathcal A^r/K)_0 \otimes_{\ZZ} \ZZ_p) = \varepsilon_{2r}(\CH^{r+1}(\mathcal A^r/K) \otimes_{\ZZ} \ZZ_p)
\]
and composing the last map with the projection $e_f: J \to V$ onto the $f$-isotypical component we finally obtain a map 
\[
\Phi_{f,K} : \CH^{r+1}(\mathcal A^r/K) \otimes_{\ZZ} \ZZ_p \, \, \stackrel{\varepsilon_{2r}}{\longrightarrow} \, \,  \varepsilon_{2r}(\CH^{r+1}(\mathcal A^r/K) \otimes_{\ZZ} \ZZ_p) \, \, \longrightarrow \, \, \H^1(K,V).
\]

Since $\mathrm{AJ}_p$ commutes with correspondences and with the Galois action, it follows that the map $\Phi_{f,K}$ is both $\mathbb T$-equivariant and (if $K/\QQ$ is Galois) $\Gal(K/\QQ)$-equivariant.

\subsection{Conjectures by Beilinson, Bloch, Kato and Perrin-Riou.}

The Beilinson conjectures predict a relation between the rank of the Chow group of homologically trivial algebraic cycles on a smooth projective variety $X$ of dimension $2i+1$ over a number field $K$ and the order of vanishing of the $L$-function attached to its \'{e}tale cohomology, expecting that 
\[
\ord_{s=i+1}\ L(\H_{\et}^{2i+1}(X), \ s) \stackrel{?}{=} \dim_{\overline{\QQ}} \ \CH^{i+1}(X)_0,
\]
(see \cite[Conjecture~5.9]{Beilinson1989height}). This conjecture can be refined by applying the projector $e$ corresponding to the motive associated to $X$ one is interested in to both sides of the equality to obtain the prediction that
\[
\ord_{s=i+1}L(e \ \H_{\et}^{2i+1}(X),s) \stackrel{?}{=} \dim_{\overline{\QQ}} e\ \CH^{i+1}(X)_0.
\]
Furthermore, letting $M$ denote $\H_{\et}^{2i+1}(X)$, and letting $\Sel(K, e \ M)$ be the submodule of $H^1(K, e \ M)$ of cohomology classes locally unramified everywhere outside a finite set $S$ of primes and satisfying suitable conditions for primes in $S$, Bloch and Kato predict that the Abel-Jacobi map 
\[
e \ \CH^{i+1}(X)_0 \otimes \QQ_p \longrightarrow \Sel(K, e \ M)
\]
is an isomorphism \cite{BK-Lfunctions}. Hence, one expects that 
\[
\ord_{s=i+1}L(e \ M,s) \stackrel{?}{=} \rank \ \Sel(K, e \ M).
\]

We outlined in the Introduction some of the contributions towards these conjectures in the setting where the motive $M$ is associated to an elliptic curve or a (twisted) higher even weight modular form and a CM field satisfying the appropriate Heegner hypothesis. In our setting, one strives to prove that
\[
\ord_{s=r+1} L(f \otimes K, s) \stackrel{?}{=} \rank \ \Sel^{(\infty)}_{\wp}(f,K),
\]
where $\Sel^{(\infty)}_{\wp}(f,K)$ is defined in \eqref{SelInfinity}.
For $r=0$, X. Yuan, S.-W. Zhang and W. Zhang \cite{yuan2013gross} showed that
\[
L'(f\otimes K,1) = \langle y_0, y_0 \rangle
\]
up to an explicit non-zero constant, for an appropriate height pairing $\langle \ , \ \rangle$, and Disegni \cite{disegni2013padic} obtained a $p$-adic avatar of this result. However, even if one disposed of a formula such as the one by X. Yuan, S.-W. Zhang and W. Zhang in higher weight, one would not be able to deduce the equality predicted by Beilinson, Bloch and Kato, because $\Phi_{f,K}$ is not known to be injective. On the other hand, one could tackle conjectures in the $p$-adic realm due to Perrin-Riou \cite{colmez1998fonctions} of the form
\[
L_p'(f\otimes K,\ell_K,1) \stackrel{?}{=} \langle \Phi_{f,K}(y_0), \Phi_{f,K}(y_0)\rangle
\]
where $\ell_K: \Gal(K_{\infty}/K) \longrightarrow \mathbb{Q}_p$ and $\langle \ , \ \rangle$ is a suitable $p$-adic height pairing. Indeed, a $p$-adic Gross--Zagier formula in higher even weight would combine with our result to validate an equality as above, along the lines of Perrin-Riou's conjecture for modular forms over Shimura curves, provided that the underlying $p$-adic height pairing is non-degenerate. This is the subject of forthcoming work of Daniel Disegni.

\subsection{CM cycles}\label{sec:CMcycles}

Now we construct the CM (or Heegner) cycles on the Kuga--Sato variety $\mathcal A^r$ alluded to above, sitting above CM points on $X=X_{N^+,N^-}$. To do so, we shall first recall the notion of CM points on the Shimura curve $X$, so we will fix from now on an imaginary quadratic field $K$ satisfying the Heegner hypothesis

\vspace{0.2cm}
\noindent
(Heeg) \hspace{0.3cm} {\em Every prime dividing $N^-$ (resp. $N^+$) is inert (resp. split) in $K$.} 
\vspace{0.2cm}

This assumption implies that $K$ embeds as a subfield in $B$. Consider then the natural map
\[
\hat B^{\times} \times \Hom(K,B) \, \, \longrightarrow \, \, X(\CC) = \left( \hat{\mathcal R}^{\times} \backslash \hat B^{\times} \times \Hom(\CC,\M_2(\RR))\right)/B^{\times},
\]
given by extending homomorphisms $K \to B$ to homomorphisms $\CC \to \M_2(\RR)$. A point $x\in X(\CC)$ is called a CM point (or a {\em Heegner} point) by $K$ if $x=[b,g]$ for some pair $(b,g) \in \hat B^{\times} \times \Hom(K,B)$. We denote by 
\[
\mathrm{CM}(X,K) := \{[b,g] \in X(\CC): (b,g) \in \hat B^{\times} \times \Hom(K,B)\}
\]
the set of all CM points on $X$ by $K$. Let $c\geq 1$ be an integer with $\gcd(N,c)=1$. A CM point $x=[b,g]\in \mathrm{CM}(X,K)$ is said to be of conductor $c\geq 1$ if the embedding $g: K \to B$ is optimal with respect to $R_c$ and $\mathcal R$, where $R_c$ denotes the quadratic order in $K$ of conductor $c$. This optimality condition means that 
\[
g(K) \cap b^{-1}\hat{\mathcal R}b = g(R_c),
\]
so that no quadratic order strictly containing $R_c$ as suborder embeds into $b^{-1}\hat{\mathcal R}b$ via $g$. Using that Eichler orders in indefinite rational quaternion algebras have class number one, it is easy to show that the set $\mathrm{CM}(X,K,c) \subseteq \mathrm{CM}(X,K)$ of Heegner points of conductor $c$ is in bijection with the set $\mathrm{Emb}(R_c,\mathcal R)$ of ($\mathcal R^{\times}$-conjugacy classes of) optimal embeddings of $R_c$ into $\mathcal R$. In particular, $|\mathrm{CM}(X,K,c)| = 2^th(R_c)$, where $t$ denotes the number of primes dividing $N$ and $h(R_c)$ is the class number of $R_c$.

Shimura's reciprocity law (cf. \cite[3.9]{DeligneShimura}, \cite[II.5.1]{MilneModels}, \cite[1.10]{MilneGoodredn}) asserts that $\mathrm{CM}(X,K)\subseteq X(K^{ab})$ and that the Galois action of $\Gal(K^{ab}/K)$ on Heegner points is described via the reciprocity map 
\[
\mathrm{rec}_K: \hat{K}^{\times} \longrightarrow \Gal(K^{ab}/K) 
\]
from class field theory. More precisely, for every $a\in \hat K^{\times}$ and every CM point $[b,g] \in \mathrm{CM}(X,K)$, 
one has 
\[
\mathrm{rec}_K(a)[b,g] = [\hat g(a)b,g],
\]
where $\hat g: \hat K^{\times} \to \hat B^{\times}$ denotes the embedding induced by $g$. In particular, recall that for each $c\geq 1$ the reciprocity map induces an isomorphism 
\[
\hat K^{\times}/\hat R_c^{\times}K^{\times} \simeq \Pic(R_c) \, \, \stackrel{\simeq}{\longrightarrow} \Gal(K_c/K),
\]
where $K_c$ denotes the ring class field of $K$ of conductor $c$, hence it follows that 
\[
\mathrm{CM}(X,K,c) \subseteq X(K_c).
\]

Moreover, one can check that $\Gal(K_c/K)$ acts freely on the set of Heegner points of conductor $c$, so that $\mathrm{CM}(X,K,c)$ has $2^t$ distinct $\Gal(K_c/K)$-orbits. Besides, the group $\mathcal W$ of Atkin--Lehner involutions acts on $\mathrm{CM}(X,K,c)$ as well, and for each prime $q\mid N$, the corresponding involution $W_q$ acts on CM points by switching their local {\em orientation} at $q$. The Galois action, on the contrary, preserves the orientations, so that the $2^t$ distinct $\Gal(K_c/K)$-orbits are in one-to-one correspondene with the $2^t$ possible orientations (cf. \cite[Chapter 7]{TianThesis}).

From a moduli point of view, a CM point $x \in \mathrm{CM}(X,K,c) \subseteq X(K_c)$ of conductor $c$ corresponds to the isomorphism class $[A,\iota]$ of an abelian surface with QM by $\mathcal R$ having CM by the order $R_c$; that is, such that $\End(A,\iota) \simeq R_c$. We use this interpretation of CM points to construct certain algebraic cycles on Kuga--Sato varieties above $X$, the desired CM cycles. To do so, we use the isomorphism $A \simeq E \times E_{\mathfrak a}$ from Section \ref{sec:QMCM} in order to define a cycle $Z_A \subseteq A$, which will eventually give rise to a cycle in $\mathcal A^r$. Recall that in the previous isomorphism, $E$ and $E_{\mathfrak a}$ are elliptic curves with CM by $R_c$, such that $E(\CC)=\CC/R_c$ and $E_{\mathfrak a}(\CC) = \CC/\mathfrak a$. Here $\mathfrak a$ is a fractional $R_c$-ideal.

We shall adopt the same conventions and normalizations as in Section \ref{sec:NSQM}. Hence by choosing a square root $\sqrt{-D_c} \in R_c$ we can decompose $\mathfrak a = \ZZ a \oplus \ZZ\sqrt{-D_c}$ as a direct sum of its $(+1)-$ and $(-1)-$eigencomponents for complex multiplication (or equivalently, for the $\Gal(K/\QQ)$-action), where $a \in \ZZ$ divides $D_c$. As explained in Section \ref{sec:NSQM}, the N\'eron--Severi group of $A$ is the free $\ZZ$-module of rank $4$ generated by the classes of the cycles 
\[
E \times 0, \quad 0 \times E_{\mathfrak a}, \quad Z_{a} \quad \text{and} \quad Z_{\sqrt{-D_c}}.
\]
We then define the CM cycle associated with $A$ to be the cycle
\[
Z_A := Z_{\sqrt{-D_c}}\subseteq A.
\]
Notice that under our conventions, the cycle $Z_A$ is well-defined up to sign, since its sign changes when $\sqrt{-D_c}$ is replaced by $-\sqrt{-D_c}$.

\begin{remark}
If $A$ is a QM abelian surface, then $\NS(A)_{\QQ}:=\NS(A)\otimes_{\ZZ}\QQ$ has a natural right $B^{\times}$-action. If further $A$ has CM as above, then one has a decomposition (as a right $B^{\times}$-module) $\NS(A)_{\QQ} \simeq \mathrm{ad}^0(B) \oplus \mathrm{Nrd}$, where $\mathrm{ad}^0(B)$ denotes the representation of $B^{\times}$ consisting of elements of trace zero on which $ B^{\times}$ acts on the right by conjugation and $\mathrm{Nrd}$ denotes the one-dimensional representation of $B^{\times}$ given by the reduced norm (cf. \cite{BesserCycles}, \cite[Lemma 8.1]{IovitaSpiess}). The one-dimensional subspace of $\NS(A)_{\QQ}$ corresponding to $\mathrm{Nrd}$ is usually refered to as its ``CM part'', and is generated by (the class of) $Z_A$.
\end{remark}

Next we study how the cycles $Z_A$ are related when $A$ varies in a QM-isogeny class of abelian surfaces with QM by $\mathcal R$ and CM by $K$. In other words, we want to relate the Heegner cycles constructed as above when we vary $x$ in $\mathrm{CM}(X,K)$.

First of all, we notice that the choice of the square root $\sqrt{-D_c}$ does not only fix the cycle $Z_A$, but also the CM cycles $Z_{A'}$ for every QM-abelian surface $A'$ which is QM-isogenous to $A$. Indeed, if $\psi: A \to A'$ is a QM-isogeny, then we fix the sign of $Z_{A'}$ by insisting that $\psi_* Z_A = t Z_{A'}$ for some $t > 0$. This condition does not depend on the isogeny $\psi$; to see this, by using that QM abelian surfaces admit a unique principal polarization compatible with the QM structure, one is reduced to show that for every $\phi \in \End_{\mathcal R}(A)$ there exists a constant $t>0$ such that $\phi_*Z_A = t Z_A$. But the latter holds because $\phi_*$ acts on $\NS(A)$ as multiplication by $\deg(\phi)$. In other words, one only requires $\sqrt{-D_{c'}}/\sqrt{-D_c}$ to be positive under the canonical identification $R_c \otimes \QQ \simeq R_{c'} \otimes \QQ$.

Now fix a CM point $x' = [A',\iota'] \in \mathrm{CM}(X,K,c')$ of conductor $c'\geq 1$, represented by some QM abelian surface $(A',\iota')$ with $\End(A',\iota')\simeq R_{c'}$. Similarly as for $A$, we now write $A' \simeq E' \times E_{\mathfrak b}$, where $E'(\CC) = \CC/R_{c'}$ and $E_{\mathfrak b}(\CC) = \CC/\mathfrak b$ for some fractional $R_{c'}$-ideal $\mathfrak b$. We write $\mathfrak b = \ZZ b \oplus \ZZ\sqrt{-D_{c'}}$ following the same conventions as those for $\mathfrak a$. Then the CM cycle associated with $x'$, or with $A'$, is the cycle $Z_{A'} := Z_{\sqrt{-D_{c'}}}$ corresponding to the isogeny
\[
\sqrt{-D_{c'}} \in \mathfrak b \simeq \Hom(E',E_{\mathfrak b}).
\]
Choosing the square root $\sqrt{-D_{c'}} \in R_{c'}$ so that $\sqrt{-D_{c'}}/\sqrt{-D_c}$ is positive, the precise relation between the cycles $Z_A$ and $Z_{A'}$ under a given QM-isogeny $A \to A'$ is the content of the next proposition.

\begin{proposition}\label{CMcycles-isogenies}
Let $(A,\iota)$ and $(A',\iota')$ be as above, and let $\psi: A \to A'$ be a QM-isogeny of abelian surfaces with QM by $\mathcal R$ and CM by $K$. Then:
\begin{itemize}
\item[a)] $\psi_* Z_A = (\deg(\psi))^{1/2}(bD_c/aD_{c'})^{1/2}Z_{A'}$.
\item[b)]$\psi^* Z_{A'} = (\deg(\psi))^{1/2}(aD_{c'}/bD_c)^{1/2}Z_A$.
\end{itemize}
\end{proposition}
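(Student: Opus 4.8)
The plan is to split the argument into two steps: first show that $\psi_*Z_A$ must be a rational multiple of $Z_{A'}$, and then pin down the multiple (and its sign) by a self-intersection computation. For the first step, write $A\simeq E\times E_{\mathfrak a}$ and $A'\simeq E'\times E_{\mathfrak b}$ as in Section~\ref{sec:QMCM}, so that, by the remark following the definition of the CM cycles, $Z_A=Z_{\sqrt{-D_c}}$ and $Z_{A'}=Z_{\sqrt{-D_{c'}}}$ generate the one-dimensional summands isomorphic to $\mathrm{Nrd}$ in the decompositions $\NS(A)_{\QQ}\simeq \mathrm{ad}^0(B)\oplus\mathrm{Nrd}$ and $\NS(A')_{\QQ}\simeq \mathrm{ad}^0(B)\oplus\mathrm{Nrd}$ as $B^{\times}$-modules. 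Since $\psi$ is a QM-isogeny, i.e. $\psi\circ\iota(\alpha)=\iota'(\alpha)\circ\psi$ for all $\alpha\in\mathcal R$, the pushforward $\psi_*\colon \NS(A)_{\QQ}\to\NS(A')_{\QQ}$ intertwines the two $B^{\times}$-actions; as $\mathrm{Nrd}$ is not a constituent of $\mathrm{ad}^0(B)$, the canonical one-dimensional $\mathrm{Nrd}$-isotypic line is preserved, whence $\psi_*Z_A=\lambda\,Z_{A'}$ for some $\lambda\in\QQ$ (nonzero, since $\psi^*\psi_*=\deg(\psi)$ is invertible on $\NS_{\QQ}$).

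To find $\lambda$ I would compute self-intersections. On the abelian surface $E\times E_{\mathfrak a}$ one has $Z_{\gamma}^2=-2\deg(\gamma)$ for every $\gamma\in\Hom(E,E_{\mathfrak a})$: the graph $\Gamma_{\gamma}$ is a smooth genus-$1$ curve in a surface with trivial canonical class, so $\Gamma_{\gamma}^2=0$ by adjunction, while $\Gamma_{\gamma}\cdot(E\times 0)=\deg(\gamma)$ and $\Gamma_{\gamma}\cdot(0\times E_{\mathfrak a})=1$, and expanding $Z_{\gamma}=\Gamma_{\gamma}-(E\times 0)-\deg(\gamma)(0\times E_{\mathfrak a})$ gives the claim. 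Taking $\gamma=\sqrt{-D_c}$ and using the normalizations $R_c=\ZZ\oplus\ZZ\sqrt{-D_c}$, $\mathfrak a=\ZZ a\oplus\ZZ\sqrt{-D_c}$ (with $a\mid D_c$) from Section~\ref{sec:NSQM}, one gets $\deg(\sqrt{-D_c}\colon E\to E_{\mathfrak a})=[\mathfrak a:\sqrt{-D_c}R_c]=D_c/a$, hence $Z_A^2=-2D_c/a$ and, likewise, $Z_{A'}^2=-2D_{c'}/b$. On the other hand, translations act trivially on Néron--Severi groups, so $\psi^*\psi_*=\deg(\psi)$ on $\NS$, and the projection formula yields $(\psi_*Z_A)^2=Z_A\cdot\psi^*\psi_*Z_A=\deg(\psi)\,Z_A^2$. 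Comparing with $(\psi_*Z_A)^2=\lambda^2Z_{A'}^2$ gives $\lambda^2=\deg(\psi)\cdot\frac{bD_c}{aD_{c'}}$; the normalization recalled just before the statement — i.e. the convention that $\psi_*Z_A=tZ_{A'}$ with $t>0$, equivalently $\sqrt{-D_{c'}}/\sqrt{-D_c}>0$ — forces $\lambda>0$, which is a). Part b) then follows by applying $\psi^*$ to a) and using $\psi^*\psi_*=\deg(\psi)$ once more, or equivalently by applying a) to the dual isogeny $\psi^{\vee}\colon A'\to A$, which has the same degree and, via the principal polarizations compatible with the QM, satisfies $\psi^*=\psi^{\vee}_*$.

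The main obstacle, I expect, is the first step: one has to be sure that $\psi_*$ genuinely preserves the one-dimensional $\mathrm{Nrd}$-component of the Néron--Severi group, which is exactly where the quaternionic equivariance of $\psi$ (as opposed to a mere isogeny of abelian surfaces) is indispensable, and one must track the sign conventions of Section~\ref{sec:NSQM} — the ambiguous choice of $\sqrt{-D_c}$, the divisibility $a\mid D_c$, and the positivity convention for $\psi_*Z_A$ — faithfully through the degree and intersection computations. Everything else is routine intersection theory on abelian surfaces.
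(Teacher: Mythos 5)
Your proof is correct and follows essentially the same route as the paper: show $\psi_*Z_A$ is proportional to $Z_{A'}$, pin down the constant by comparing the self-intersections $Z_A^2=-2\deg(\sqrt{-D_c})=-2D_c/a$ and $Z_{A'}^2=-2D_{c'}/b$ using $\psi^*\psi_*=\deg(\psi)$ on $\NS$, fix the sign by the positivity convention, and deduce b) from a) via the projection formula. The only divergence is the proportionality step, where the paper checks directly that $\psi_*Z_A$ is orthogonal to the rank-three submodule $\langle E'\times 0,\, 0\times E_{\mathfrak b},\, Z_{b}\rangle$ of $\NS(A')$, while you invoke the $B^{\times}$-equivariance of $\psi_*$ together with the decomposition $\NS_{\QQ}\simeq \mathrm{ad}^0(B)\oplus\mathrm{Nrd}$ from the preceding remark (Besser, Iovita--Spie{\ss}); both justifications are valid.
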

\begin{proof}
First notice that the degree of the isogeny $\sqrt{-D_c}: E \to E_{\mathfrak a}$ equals the index of $\sqrt{-D_c}R_c$ in $\mathfrak a$ as lattices. This index is precisely $a^{-1}D_c$. Similarly, the degree of $\sqrt{-D_{c'}}: E' \to E_{\mathfrak b}$ is $b^{-1}D_{c'}$. We see therefore that the constant $bD_c/aD_{c'}$ in the statement is precisely $\deg(\sqrt{-D_c})/\deg(\sqrt{-D_{c'}})$. Similarly, the ratio $aD_{c'}/bD_c$ in (b) coincides with $\deg(\sqrt{-D_{c'}})/\deg(\sqrt{-D_c})$.

Now consider the CM cycle $Z_A = \Gamma_{\sqrt{-D_c}} - (E\times 0) - \deg(\sqrt{-D_c})(0\times E_{\mathfrak a})$. A straightforward computation shows that $\psi_*Z_A$ is orthogonal with respect to the intersection pairing to $\langle E'\times 0, 0 \times E_{\mathfrak b}, Z_{b}\rangle$, hence it must lie in the $\ZZ$-submodule of rank $1$ in $\NS(A')$ generated by the CM cycle $Z_{A'}$. As a consequence, $\psi_*Z_A = q Z_{A'}$ for some constant $q$. Such a constant can be then obtained by computing the self-intersection numbers $Z_{A'}\cdot Z_{A'}$ and $(\psi_*Z_A)\cdot(\psi_*Z_A)$ and using the identity $(\psi_*Z_A)\cdot(\psi_*Z_A) = q^2 Z_{A'}\cdot Z_{A'}$. We have 
\[
Z_{A'}\cdot Z_{A'} = -2\deg(\sqrt{-D_{c'}}) \quad \text{and} \quad (\psi_*Z_A)\cdot(\psi_*Z_A) = -2\deg(\sqrt{-D_c})\deg(\psi),
\]
hence we conclude that $q = \left(\frac{\deg(\psi)\deg(\sqrt{-D_c})}{\deg(\sqrt{-D_{c'}})}\right)^{1/2} = (\deg(\psi))^{1/2}(bD_c/aD_{c'})^{1/2}$ and (a) follows. Part (b) can be obtained now from (a) by the projection formula.
\end{proof}

Let now $x \in X(K^{ab}) \subseteq \mathrm{CM}(X,K)$ be a CM point by $K$, and choose $\tilde x \in q^{-1}(x)$ any preimage of $x$ under the forgetful morphism $q: X^M \to X$ of Shimura curves. The fibre $\mathcal A_{\tilde x}$ is a QM abelian surface with $\End(\mathcal A_{\tilde x},\iota_{\tilde x}) = R_c$, for some positive integer $c$. Hence we have a well defined (up to sign) cycle $Z_{\tilde{x}} \in \CH^1(\mathcal A_{\tilde x})$. 

The cycles $Z_{\tilde x}$, for $\tilde x \in q^{-1}(x)$, can be chosen in a compatible way with respect to the action of $G(M)$. Namely, every $g\in G(M)$ extends to an automorphism $g: \mathcal A \to \mathcal A$, and induces an isomorphism $g: \mathcal A_{\tilde x} \to \mathcal A_{g(\tilde x)}$ for every $\tilde x \in q^{-1}(x)$. Then we may require that $g_*(Z_{\tilde x}) = Z_{g(\tilde x)}$ for every $\tilde{x} \in q^{-1}(x)$ and every $g \in G(M)$. 

On the other hand, if $\mathcal W$ denotes the group of Atkin--Lehner involutions acting on $X$, each $w\in \mathcal W$ extends canonically to an involution on $X^M$, which we still denote by the same symbol $w$, commuting with the action of $G(M)$. Similarly, also the group $\Gal(K_c/K)$ acts on $X^M/K_c$, and in particular on $q^{-1}(\mathrm{CM}(X,K,c))$. The actions of both $\mathcal W$ and 
$\Gal(K_c/K)$ extend to the universal abelian surface $\mathcal A$, and we may choose the cycles $Z_{\tilde x}$ so that 
\[
w_*(Z_{\tilde x}) = Z_{w(\tilde x)} \quad \delta_*(Z_{\tilde x}) = Z_{\delta(\tilde x)}
\]
for every $w\in \mathcal W$ and every $\delta \in \Gal(K_c/K)$.

Continue to fix our CM point $x\in \mathrm{CM}(X,K,c)$ and any lift $\tilde x \in q^{-1}(x)$. The inclusion $i_{\tilde x}: \pi^{-1}(\tilde x) = \mathcal A_{\tilde x} \hookrightarrow \mathcal A$ of the fibre $\pi^{-1}(\tilde x)$ into the universal abelian surface $\mathcal A$ induces a map $\CH^1(\mathcal A_{\tilde x}) \to \CH^2(\mathcal A)$, and hence we can consider the image of $Z_{\tilde x}$ in $\CH^2(\mathcal A)$. More generally, the inclusion $i_{\tilde x}^r: \pi_r^{-1}(\tilde x) = \mathcal A_{\tilde x}^r \hookrightarrow \mathcal A^r$ of the fibre of $\pi_r: \mathcal A^r \to X^M$ above $\tilde x$ into the $r$-th Kuga--Sato variety $\mathcal A^r$ over $X^M$, induces a map 
\[
(i_{\tilde x}^r)_*: \CH^r(\mathcal A_{\tilde x}^r) \, \, \longrightarrow \, \, \CH^{r+1}(\mathcal A^r),
\]
and we let $\mathcal Z_{\tilde x} := (i_{\tilde{x}}^r)_*(Z_{\tilde x}^r) \in \CH^{r+1}(\mathcal A^r/K_c)$ be the image of $Z_{\tilde{x}}^r = Z_{\tilde x} \times \cdots \times Z_{\tilde x}$. Hence $\mathcal Z_{\tilde x}$ is a cycle of codimension $r+1$ in $\mathcal A^r$ defined over $K_c$, the ring class field of conductor $c = c(x)$. Notice that for every $w\in \mathcal W$ and $\delta \in \Gal(K_c/K)$ the above compatibility conditions for the cycles $Z_{\tilde x}$ imply that 
\[
w_*(\mathcal Z_{\tilde{x}}) = \mathcal Z_{w(\tilde{x})} \quad \text{and} \quad 
\delta_*(\mathcal Z_{\tilde{x}}) = \mathcal Z_{\delta(\tilde{x})}.
\]

\section{The Euler system}

We keep assuming the Heegner hypothesis (Heeg), so that for every integer $n\geq 1$ with $\gcd(ND_K,n)=1$ the set $\CM(X,K,n)$ of CM points of conductor $n$ is non-empty. Under the maps $\Phi_{f,K_n}$ induced by the relevant $p$-adic \'etale Abel--Jacobi maps, the CM cycles constructed in the previous section give rise to a collection of cohomology classes enjoying the compatibility properties that turn them into an Euler system of Kolyvagin type.

We shall restrict ourselves to square-free integers $n\geq 1$ such that $\gcd(pND_K,n)=1$ and whose prime factors are all {\em inert} in $K$. Such primes will be referred to as {\em Kolyvagin primes}, although later we will need to impose more precise restrictions. Fix an embedding $g: K \to B$, optimal with respect to $R_1$ and $\mathcal R$, where $R_1=R_K$ stands for the ring of integers in $K$. Such an embedding defines a CM point $x(1) = [1,g] \in \CM(X,K,1)$ of conductor $1$ in $X$. For each prime $\ell \nmid pND_K$ inert in $K$, choose an element $b(\ell) \in B_{\ell}^{\times} \subset \hat B^{\times}$ satisfying $\ord_{\ell}(\mathrm n(b(\ell)))=1$, and if $n=\ell_1\cdots \ell_k$ is a product of pairwise distinct primes $\ell_i \nmid pND_K$ inert in $K$, set $b(n) := b(\ell_1)\cdots b(\ell_k) \in \hat{B}^{\times}$. Then $x(n) := [b(n),g] \in \CM(X,K,n)$ is a CM point of conductor $n$ in $X$. This way, we have defined a collection of algebraic points $x(n) \in X(K_n)$, indexed by positive integers $n$ as above.

For each $x(n)$, we choose a preimage $\tilde{x}(n) \in q^{-1}(x(n))$ in $X^M$, and for $\tilde x(n)$ we have defined a CM cycle $\mathcal Z_n := \mathcal Z_{\tilde{x}(n)} \in \CH^{r+1}(\mathcal A^r/K_n)$. We denote by $y_n \in \H^1(K_n,V)$ the image of $\mathcal Z_n$ by the $\mathbb T[\Gal(K_n/K)]$-equivariant morphism 
\[
\Phi_{f,K_n}: \CH^{r+1}(\mathcal A^r/K_n) \otimes \ZZ_p\, \, \longrightarrow \, \, \H^1(K_n,V).
\]
 Because of the averaging projector $p_G$ involved in the definition of $\Phi_{f,K_n}$, the cohomology class $y_n$ does not depend on the choice of $\tilde{x} \in q^{-1}(x)$.

\subsection{The Euler system relations}

Let $n = \ell_1\cdots \ell_k$ be the square-free product of pairwise distinct primes as above, all of them inert in $K$ and not dividing $pND_K$. We may assume for simplicity that $R_K^{\times} =\{\pm 1\}$, which amounts to say that $K\neq \QQ(\sqrt{-1}), \QQ(\sqrt{-3})$ (otherwise, all the discussion applies almost without any change, with $u_K:=|R_K^{\times}|/2$ appearing in many of the computations). If we write $G_n:=\Gal(K_n/K_1)$, then $G_n \simeq \prod_{\ell\mid n} G_{\ell}$, where each $G_{\ell}=\Gal(K_{\ell}/K_1)$ is cyclic of order $\ell+1$. We fix once and for all a generator $\sigma_{\ell}\in G_{\ell}$ for each prime $\ell$ inert in $K$. By class field theory, if $n = m\ell$ and $\lambda = (\ell)$ is the only prime in $K$ above $\ell$, then $\lambda$ splits completely in $K_m/K$, and each prime $\lambda_m$ in $K_m$ above $\lambda$ is totally ramified in $K_n/K_m$, so that $\lambda_m = (\lambda_n)^{\ell+1}$.

When $n$ varies over positive integers as above, the cohomology classes $y_n$ arising from CM cycles in $\mathcal A^r$ enjoy the following norm compatibility relations.

\begin{proposition}[Global norm compatibilities] \label{gnc}
Let $n= \ell m$ be a product of Kolyvagin primes. Then
\[
T_\ell(y_m)= \mathrm{cor}_{K_{m\ell},\ K_m}(y_{m \ell}) = a_{\ell} y_{m \ell}.
\]
\end{proposition}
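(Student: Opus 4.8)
The plan is to reduce the global statement to a computation at the level of CM cycles on $\mathcal A^r$, exploiting the compatibility of the Abel--Jacobi map with pushforward (norm/corestriction) and with the Hecke correspondence $T_\ell$, and then to trace through the action of $T_\ell$ on the supply of CM points parametrized by the tree at $\ell$. First I would record the two facts about $\Phi_{f,K_n}$ that we have at our disposal: it is $\mathbb T$-equivariant (so $\Phi_{f,K_m}(T_\ell \mathcal Z) = T_\ell\, \Phi_{f,K_m}(\mathcal Z)$) and $\Gal(K_n/\QQ)$-equivariant, which in particular gives compatibility with corestriction, $\cor_{K_{m\ell}/K_m}\circ \Phi_{f,K_{m\ell}} = \Phi_{f,K_m}\circ \sum_{\delta \in \Gal(K_{m\ell}/K_m)} \delta_*$. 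Thus the identity $\cor_{K_{m\ell}/K_m}(y_{m\ell}) = a_\ell\, y_{m\ell}$ would follow once we establish the corresponding identity in $\CH^{r+1}(\mathcal A^r/K_m)\otimes\ZZ_p$ after applying the projectors $\varepsilon_{2r}$ and $e_f$, namely that the trace of $\mathcal Z_{m\ell}$ over $\Gal(K_{m\ell}/K_m)$ agrees modulo the kernel of $\Phi$ with $T_\ell\,\mathcal Z_m = a_\ell\,\mathcal Z_{m\ell}$, the last equality holding because $f$ is a Hecke eigenform with $T_\ell$-eigenvalue $a_\ell$ and $\Phi$ kills everything outside the $f$-isotypical part.

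The heart of the matter is therefore the cycle-level identity $\cor_{K_{m\ell}/K_m}(\mathcal Z_{m\ell}) \equiv T_\ell\, \mathcal Z_m$. I would prove this by unwinding the definition of $T_\ell$ on $\mathcal A^r$ given by the cartesian diagram in Section on Kuga--Sato varieties: $T_\ell = \phi_1^{*}\circ\phi_{2*}\circ\phi_{3*}$, where $X^{M,\ell}$ parametrizes QM abelian surfaces together with a cyclic $\mathcal R$-stable subgroup $C[\ell]\subset A[\ell]$ of order $\ell^2$, of which there are exactly $\ell+1$. The key geometric input is Shimura's reciprocity law together with the description, recalled just before the proposition, of how $\lambda$ splits in $K_m$ and ramifies in $K_{m\ell}$: the $\ell+1$ cyclic $\mathcal R$-submodules of $\mathcal A_{\tilde x(m)}[\ell]$ correspond to the $\ell+1$ Shimura curve points lying over $x(m)$ in the $T_\ell$-correspondence, and precisely one of them ``goes up'' to a point of conductor $m$ (the $\ell$-isogeny internal to the ring class field tower) while the remaining $\ell$ reproduce, after applying $\phi_{3*}$ and then $\phi_1^*$, the Galois conjugates of $x(m\ell)$ under $\Gal(K_{m\ell}/K_m)$. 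This is exactly the classical Eichler--Shimura / Kolyvagin relation for Heegner points on the Shimura curve $X$; I would cite the argument as in \cite{nekovar1992kolyvagin}, adapting it from modular curves to our $X = X_{N^+,N^-}$ using that Eichler orders here have class number one and that CM points are in bijection with optimal embeddings. The extra work relative to weight $2$ is the passage from points on $X$ to cycles on $\mathcal A^r$: one must check that the isogeny $\psi\colon A \to A'$ of QM abelian surfaces underlying each edge of the tree transports the CM cycle $Z_A$ to $Z_{A'}$ with the correct multiplicity, which is precisely the content of Proposition \ref{CMcycles-isogenies}; combined with the compatibility of the cycles $Z_{\tilde x}$ with the $G(M)$-, $\mathcal W$- and $\Gal(K_c/K)$-actions fixed in Section \ref{sec:CMcycles}, the fibrewise relation propagates to $\mathcal Z_{\tilde x}^r = Z_{\tilde x}\times\cdots\times Z_{\tilde x}$ and then to $\mathcal A^r$ via $(i_{\tilde x}^r)_*$.

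The first equality $T_\ell(y_m) = \cor_{K_{m\ell}/K_m}(y_{m\ell})$ is really the same computation read one step earlier: it says that $T_\ell$ applied to the class of $x(m)$ on $\mathcal A^r$ equals the sum over the $\ell+1$ edges, and the identification of those edges with $q^{-1}$ of the conductor-$m\ell$ point and its conjugates together with the one ``descending'' edge (whose contribution lands in the image of $\cor$ from $K_m$ itself and is absorbed, or vanishes after applying $e_f$ by the usual argument that the conductor-$m$ contribution is accounted for separately) yields the corestriction of $y_{m\ell}$. The main obstacle I anticipate is not any single step but the bookkeeping of the degree factors $(\deg\psi)^{1/2}(bD_c/aD_{c'})^{1/2}$ from Proposition \ref{CMcycles-isogenies} across the $\ell+1$ edges: one must verify that these square-root normalizations are consistent (in particular that the sign/scaling ambiguity in $Z_A$, fixed only up to the choice of $\sqrt{-D_c}$, is pinned down coherently along the whole tower by the positivity convention $\psi_* Z_A = t Z_{A'}$, $t>0$) and that after summing they combine into the single Hecke eigenvalue $a_\ell$ with no spurious factor of $\ell$. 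I would handle this by working throughout in the $f$-isotypical quotient, where $T_\ell$ acts as the scalar $a_\ell$, so that the normalization constants only need to be tracked up to the relation they must satisfy rather than computed absolutely; the consistency of the positivity convention then does the rest.
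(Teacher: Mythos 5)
Your overall strategy coincides with the paper's: reduce to a cycle-level identity via the Hecke- and Galois-equivariance of $\Phi_{f,K_m}$, then feed in the norm relation for CM points on $X$ together with the Galois-compatibility of the cycles $\mathcal Z_{\tilde x}$. However, your description of the key geometric input is incorrect, and the patch you build on it does not work. Since $\ell$ is a Kolyvagin prime it is inert in $K$ and prime to $mND_K$; hence \emph{every} one of the $\ell+1$ cyclic $\mathcal R$-stable subgroups $C\subset A_m[\ell]$ yields a quotient with CM by $R_{m\ell}$, so all $\ell+1$ points in the support of $T_\ell(x(m))$ have conductor $m\ell$ and form a single orbit under $\Gal(K_{m\ell}/K_m)$, a group of order exactly $\ell+1$ (not $\ell$). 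There is no ``descending'' edge of conductor $m$, and your fallback that such a term ``is absorbed into the image of $\cor$ or vanishes after applying $e_f$'' is not an argument: had such a summand existed it would contribute a genuine extra term to $T_\ell(y_m)$ and the stated identity would fail. The correct input is precisely the relation the paper quotes from Nekov\'a\v r \cite[Proposition 4.8 (ii)]{nekovar2007euler}, namely $T_\ell x(m)=\sum_{\sigma\in\Gal(K_{m\ell}/K_m)}\sigma(x(m\ell))$ with $\ell+1$ terms on both sides; combined with $\sigma_*\mathcal Z_{\tilde x(m\ell)}=\mathcal Z_{\sigma(\tilde x(m\ell))}$ and the equivariance of $\Phi_{f,K_m}$ this gives $T_\ell(y_m)=\cor_{K_{m\ell},K_m}(y_{m\ell})$, the remaining equality being just the scalar action of $T_\ell$ on $V$.

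Your final paragraph also does not prove the bookkeeping it raises. Working ``in the $f$-isotypical quotient where $T_\ell$ acts as $a_\ell$'' only yields $T_\ell y_m=a_\ell y_m$; it says nothing about how $T_\ell y_m$ compares with $\cor_{K_{m\ell},K_m}(y_{m\ell})$, which is the whole content of the proposition, so any surviving spurious constant would change the Euler-system relation rather than be absorbed. In fact no factors from Proposition \ref{CMcycles-isogenies} are needed for the global relation: the cycles $\mathcal Z_{\tilde x}$ attached to the points in the support of $T_\ell(\tilde x(m))$ are the intrinsically normalized CM cycles, chosen once and for all compatibly with the $G(M)$-, $\mathcal W$- and Galois-actions as in Section \ref{sec:CMcycles}, and the compatibility of the correspondence $T_\ell$ on $X$, $X^M$ and $\mathcal A^r$ gives directly $T_\ell\mathcal Z_{\tilde x(m)}=\sum_{\tilde x\in\mathrm{Supp}(T_\ell(\tilde x(m)))}\mathcal Z_{\tilde x}$. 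The isogeny-degree factors of Proposition \ref{CMcycles-isogenies} enter only in the \emph{local} norm compatibility at $\ell$ (the next proposition), not here.
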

\begin{proof}
We know that $T_{\ell}$ acts on $V$ as multiplication by $a_{\ell}$, thus it suffices to prove the first equality of the statement. Suppose that $y_m \in \mathrm H^1(K_m,V)$ is the image under $\Phi_{f,K_m}$ of a CM cycle $\mathcal Z_{\tilde{x}(m)} = (i_{\tilde{x}(m)}^r)_*(Z_{\tilde{x}(m)}^r)$, for some $\tilde x(m) \in X^M(\CC)$ lying above a CM point $x(m)\in X(K_m)$ of conductor $m$. The divisor $T_{\ell}(\tilde x(m))$ consists of $\ell+1$ points, lying above the $\ell+1$ points whose formal sum is $T_{\ell}(x(m))$. By compatibility of the Hecke correspondence $T_{\ell}$ acting on $X$, $X^M$ and $\mathcal A^r$, we have 
\[
 T_{\ell} \mathcal Z_{\tilde x(m)}= \sum_{\tilde x \ \in \ \mathrm{Supp}(T_{\ell}(\tilde x(m)))} \mathcal Z_{\tilde x}.
\]
But using the norm relations for CM points on $X$ (see \cite[Proposition 4.8 (ii)]{nekovar2007euler}), the right hand side equals 
\[
 \sum_{\sigma \in \Gal(K_{m \ell}/K_m) }  \mathcal Z_{\sigma(\tilde x(m\ell))} = \sum_{\sigma \in \Gal(K_{m \ell}/K_m) } \sigma_*(\mathcal Z_{\tilde x(m\ell)}),
\]
where the last equality follows from the compatibility of the CM cycles under Galois action. Finally, since $\Phi_{f,K_m}$ is Hecke- and Galois-equivariant, the above relation implies $T_{\ell}(y_m) = \mathrm{cor}_{K_n, K_m}(y_{m \ell})$, as was to be proved.
\end{proof}

\begin{proposition}[Local norm compatibilities]
Let $n= \ell m$ be a product of Kolyvagin primes. Let $\lambda_n$ be a prime dividing $\ell$ in $K_n$. Then 
\[
y_{n,\lambda_n} = \mathrm{Frob}(\ell)(\mathrm{res}_{K_{\lambda_m},K_{\lambda_n}}(y_{n,\lambda_n})) \in \H^1(K_{\lambda_n},V).
\]
\end{proposition}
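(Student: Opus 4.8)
```latex
\begin{proof}[Proof sketch]
The plan is to reduce this to the known behaviour of CM points on the Shimura curve $X$ at the prime $\ell$, exactly as in the weight $2$ case, and then propagate the relation through the Abel--Jacobi construction. First I would recall the arithmetic of the prime $\ell$: since $\ell$ is a Kolyvagin prime (inert in $K$, prime to $pND_K$), the unique prime $\lambda=(\ell)$ of $K$ above $\ell$ splits completely in $K_m/K$, and for $n=\ell m$ each prime $\lambda_m$ of $K_m$ above $\lambda$ is totally (tamely) ramified of degree $\ell+1$ in $K_n/K_m$, with residue field of size $\ell^2$. Consequently $K_{\lambda_n}/K_{\lambda_m}$ is totally ramified and the residue extensions coincide, so $\mathrm{Frob}(\ell)$ makes sense as the arithmetic Frobenius of the common residue field. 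The key input is the description, via Shimura's reciprocity law (cf. Section~\ref{sec:CMcycles} and \cite[Proposition~4.8]{nekovar2007euler}), of the reduction of $x(n)$ modulo $\lambda_n$: the CM point $x(n)$ and its conjugate $\mathrm{Frob}(\ell)(x(n))$ have the same image in the special fibre at $\lambda_n$, because the Frobenius acts on the corresponding optimal embedding (equivalently on the supersingular reduction data at $\ell$) in a way that does not change the geometric point. This is the local analogue of the relation $\mathrm{red}_{\lambda_n}(x(n)) = \mathrm{Frob}(\ell)\,\mathrm{red}_{\lambda_n}(x(m))$ familiar from Heegner point theory.

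Next I would lift this to the Kuga--Sato variety. The cycle $\mathcal Z_n = (i_{\tilde{x}(n)}^r)_*(Z_{\tilde{x}(n)}^r)$ is supported on the fibre $\mathcal{A}^r_{\tilde{x}(n)}$, which at $\lambda_n$ reduces to the fibre over the reduction of $\tilde{x}(n)$; the compatibility conditions imposed on the cycles $Z_{\tilde{x}}$ under the Galois action ($\delta_*(Z_{\tilde{x}}) = Z_{\delta(\tilde{x})}$) and the fact that the CM cycle $Z_A$ depends only on the QM abelian surface $A$ up to isogeny (Proposition~\ref{CMcycles-isogenies} and the surrounding discussion) imply that, modulo $\lambda_n$, the cycle $\mathcal Z_n$ and $\mathrm{Frob}(\ell)_*(\mathcal Z_n)$ have the same reduction. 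Passing through $\Phi_{f,K_n}$, which is Galois-equivariant, and then restricting to the local cohomology $\H^1(K_{\lambda_n}, V)$, I would invoke the standard fact that over a local field with residue characteristic prime to $p$ the inflation--restriction sequence identifies the relevant piece of $\H^1(K_{\lambda_n},V)$ coming from the (finite/singular) part with data on the special fibre, on which the action of $\mathrm{Gal}(K_{\lambda_n}^{\mathrm{ur}}/K_{\lambda_n})$ is by Frobenius. Concretely, the local class $y_{n,\lambda_n}$ is determined by the reduction of $\mathcal Z_n$ together with the Galois action, and since $K_{\lambda_n}/K_{\lambda_m}$ is totally ramified the restriction map does not change the reduction but does record the comparison between the two integral models; combining this with the identity of reductions under $\mathrm{Frob}(\ell)$ yields $y_{n,\lambda_n} = \mathrm{Frob}(\ell)\bigl(\mathrm{res}_{K_{\lambda_m},K_{\lambda_n}}(y_{n,\lambda_n})\bigr)$.

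The main obstacle I anticipate is making the ``reduction of the CM cycle'' argument rigorous in the higher-weight setting: unlike for Heegner points, here one is dealing with a codimension-$(r+1)$ cycle in a $(2r+1)$-dimensional variety with bad reduction potentially occurring in the fibres (the QM abelian surfaces can become supersingular at $\ell$), so one must check that the specialization map on Chow groups, and the compatibility of $\Phi_{f,K_n}$ with it, behaves well. Since $\ell$ is a prime of good reduction for $\mathcal{A}^r$ over $X^M$ (as $\ell \nmid MN$) and $p \neq \ell$, the \'etale realization $V$ is unramified at $\ell$ and the relevant specialization statements are available; I would cite the analogous argument in \cite{nekovar1992kolyvagin} (for modular curves) and \cite{nekovar2007euler} (for Shimura curves in weight $2$), verifying that the QM structure does not interfere. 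A secondary point to be careful about is the sign ambiguity in $Z_A$: one must check that the chosen normalization (the square root $\sqrt{-D_c}$ fixed compatibly across the isogeny class) is preserved under reduction and under the Galois action of $\mathrm{Frob}(\ell)$, which follows from the discussion preceding Proposition~\ref{CMcycles-isogenies}.
\end{proof}
```
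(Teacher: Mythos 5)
Your overall strategy—reduce to the behaviour of the CM data at $\ell$, use the supersingular reduction and Frobenius, and push the relation through the Galois-equivariant map $\Phi_{f,K_n}$—is the same strategy the paper follows, but your sketch has a genuine gap at the one step where the higher-weight case actually differs from the weight-$2$ point congruence you invoke. You assert that ``the CM cycle $Z_A$ depends only on the QM abelian surface $A$ up to isogeny,'' and you never actually relate $\mathcal Z_n$ to $\mathcal Z_m$ (you only compare $\mathcal Z_n$ with $\Frob(\ell)_*\mathcal Z_n$, echoing what is evidently a typo in the statement; the content of the proposition, and of the paper's proof, is the comparison of $y_n$ with $y_m$). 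But Proposition \ref{CMcycles-isogenies} says precisely that CM cycles are \emph{not} isogeny-invariant: under a QM-isogeny $\psi$ one has $\psi_*Z_A = (\deg\psi)^{1/2}(bD_c/aD_{c'})^{1/2}Z_{A'}$, so an unchecked scalar could enter the Euler-system relation. The paper's proof supplies exactly what your sketch omits: it represents $x(n)$ by $A_n = A_m/C$ with $C\subseteq A_m[\ell]$ the $\mathcal R$-cyclic subgroup of order $\ell^2$, observes that the canonical isogeny $\psi\colon A_m\to A_n$ has degree $\ell^2$ and reduces to Frobenius on each factor of the common supersingular reduction (the residue fields at $\lambda_m$ and $\lambda_n$ coincide and equal $\mathbb F_{\ell^2}$ because $\lambda_m$ is totally ramified in $K_n/K_m$), and then checks via Proposition \ref{CMcycles-isogenies} that the constant is $1$: here $a=b$, $c'=c\ell$, hence $D_{c'}=\ell^2D_c$ and $(\deg\psi)^{1/2}(bD_c/aD_{c'})^{1/2}=\ell\cdot\ell^{-1}=1$, so $\psi_*Z_{A_m}=Z_{A_n}$ on the nose. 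It is this equality of cycles, combined with the fact that $\psi$ reduces to Frobenius, that produces the $\Frob(\ell)$ in the formula; the Heegner-point congruence $\mathrm{red}(x(n))=\Frob(\ell)\,\mathrm{red}(x(m))$ alone does not determine the fibre cycles and so cannot by itself yield the identity in $\H^1(K_{\lambda_n},V)$.

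Your secondary remarks are fine but somewhat beside the point: supersingular reduction is still good reduction (indeed $\ell\nmid MN$ and $\ell\neq p$, so $V$ is unramified at $\ell$ and the unramified classes are determined by their values on Frobenius), and the sign normalization of $Z_A$ across the isogeny class is already fixed by requiring $\psi_*Z_A = tZ_{A'}$ with $t>0$, which is exactly the convention the degree computation above respects. If you make the isogeny $A_m\to A_m/C$ and the constant-equals-one computation explicit, your argument becomes the paper's proof.
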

\begin{proof}
Let $x(m)$ and $x(n)$ be the CM points on $X$ corresponding to the classes $y_m$ and $y_n$, respectively, and let $A_m$ be a QM-abelian surface with CM representing $x(m)$. Then $x(n)$ can be represented by  $A_n = A_m/C$, where $C\subseteq A[\ell]$ is a subgroup of order $\ell^2$, cyclic as $\mathcal R$-submodule, and $A_m$ and $A_n$ are related by the canonical isogeny $A_m \to A_m/C$. Since $\ell$ is inert in $K$, the reductions of $A_m$ and $A_n$ modulo $\lambda_m$ and $\lambda_n$, respectively, are both products of two supersingular elliptic curves (recall that $\lambda_m$ is totally ramified in $K_n/K_m$, hence the residue fields of both $K_m$ at $\lambda_m$ and $K_n$ at $\lambda_n$ coincide, and are in fact isomorphic to the finite field of $\ell^2$ elements). Then the isogeny $A_m \to A_m/C$ reduces to Frobenius on each factor and the claim follows by the relations between CM cycles under isogeny in Proposition \ref{CMcycles-isogenies} (the constants $a$ and $b$ are forced to be equal, and $c'=c\ell$).
\end{proof}

The cohomology classes $y_n$ also enjoy the following compatibility with respect to complex conjugation, which stems directly from the action of complex conjugation on the CM cycles.

\begin{proposition}
If $\rho$ denotes complex conjugation, then 
\[
\rho(y_n) = -\varepsilon \sigma(y_n)
\]
for some $\sigma \in \Gal(K_n/K)$, where $\varepsilon$ denotes the sign in the functional equation of $L(f,s)$.
\end{proposition}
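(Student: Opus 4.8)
The plan is to track the action of complex conjugation $\rho$ through the three layers that enter the definition of $y_n$: the CM point $x(n) \in X(K_n)$, the CM cycle $\mathcal{Z}_n = (i_{\tilde x(n)}^r)_*(Z_{\tilde x(n)}^r) \in \CH^{r+1}(\mathcal{A}^r/K_n)$, and finally its image $y_n = \Phi_{f,K_n}(\mathcal{Z}_n) \in \H^1(K_n,V)$. Since $X$, $X^M$, the universal surface $\mathcal{A} \to X^M$ and hence $\mathcal{A}^r$ are all defined over $\QQ$, and $K_n/\QQ$ is Galois (being a ring class field), $\rho$ acts compatibly on all these objects, so that the task reduces to writing $\rho(\mathcal{Z}_n)$ in terms of the actions of the Atkin--Lehner group $\mathcal{W}$ and of $\Gal(K_n/K)$, and then invoking the equivariance of $\Phi_{f,K_n}$.

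The key first step is to show that $\rho(x(n)) = W_N\bigl(\sigma_n(x(n))\bigr)$ for a (unique) $\sigma_n \in \Gal(K_n/K)$, where $W_N = \prod_{q \mid N} W_q$ is the Fricke involution. The mechanism is that $\rho$ acts on $\CM(X,K,n)$ by replacing an optimal embedding $g \colon K \hookrightarrow B$ with its conjugate $g \circ c_K$ ($c_K$ being complex conjugation on $K$), and that this conjugate embedding carries the opposite local orientation at every prime $q \mid N$. Since the Galois action preserves orientations while $W_q$ flips precisely the orientation at $q$, and the $2^{\omega(N)}$ orientations are in bijection with the $\Gal(K_n/K)$-orbits of CM points of conductor $n$, complex conjugation must carry the orbit of $x(n)$ onto the orbit obtained by applying $W_N$; freeness of the $\Gal(K_n/K)$-action then pins down $\sigma_n$. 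Here one uses Shimura's reciprocity law for the Galois action on CM points, together with the commutativity of $W_N$ with the Galois action (the Atkin--Lehner involutions being defined over $\QQ$). Lifting to $X^M$: as $\rho$ commutes with the forgetful map $X^M \to X$ and with the extensions of $W_N$ and of $\Gal(K_n/K)$ to $X^M$, the points $\rho(\tilde x(n))$ and $W_N(\sigma_n(\tilde x(n)))$ lie in the same fibre, hence differ by an element $g \in G(M)$.

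Next I would compute the action of $\rho$ on the fibral cycle. Under an isomorphism $\mathcal{A}_{\tilde x(n)} \cong E \times E_{\mathfrak a}$, the cycle $Z_{\tilde x(n)} = Z_{\sqrt{-D_c}}$ generates the one-dimensional ``CM part'' of $\NS(\mathcal{A}_{\tilde x(n)})$, on which complex conjugation acts through the non-trivial element of $\Gal(K/\QQ)$; because $\sqrt{-D_c}$ is purely imaginary (precisely the point of the normalization in Section~\ref{sec:NSQM}), this forces $\rho_*(Z_{\tilde x(n)}) = -Z_{\rho(\tilde x(n))}$ with the coherent normalization of square roots fixed earlier. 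Forming $r$-fold external products and pushing forward yields $\rho(\mathcal{Z}_n) = (-1)^r\,\mathcal{Z}_{\rho(\tilde x(n))}$, and combining this with the first step and with the compatibility of the cycles $\mathcal{Z}_{\tilde x}$ under the $G(M)$-, $\mathcal{W}$- and $\Gal(K_n/K)$-actions from Section~\ref{sec:CMcycles}, we get $\rho(\mathcal{Z}_n) = (-1)^r\, g_*(W_N)_*(\sigma_n)_*(\mathcal{Z}_n)$. Applying now $\Phi_{f,K_n}$, which is Hecke- and $\Gal(K_n/\QQ)$-equivariant and factors through the projector $p_{G(M)}$ (so $\Phi_{f,K_n} \circ g_* = \Phi_{f,K_n}$), and using that $W_N$ acts on $V = V(f)$ as the scalar $w_{N,1} = w_{N^+,N^-} = \prod_{q \mid N} \varepsilon_{q,f}$, we obtain $\rho(y_n) = (-1)^r\, w_{N,1}\,\sigma_n(y_n)$. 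Since $\varepsilon = (-1)^{r+1} w_{N,1}$ by the functional equation recalled after Proposition~\ref{prop:V}, this is exactly $\rho(y_n) = -\varepsilon\,\sigma_n(y_n)$.

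The main obstacle is the first step: one must check that complex conjugation flips the local orientation at \emph{every} prime dividing $N$ simultaneously, so that the full Fricke involution $W_N$ appears with no proper sub-product and no residual Galois twist. This calls for a careful analysis of optimal embeddings --- especially at the inert primes $q \mid N^-$, where the orientation is the binary invariant attached to the two-element local double coset space --- combined with Shimura's reciprocity law. The only other delicate point is the sign $(-1)^r$ of the second step, which rests entirely on the CM cycle being built from the purely-imaginary $\sqrt{-D_c}$; everything else follows formally from the equivariance properties already established.
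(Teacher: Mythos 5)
Your proposal is correct and follows essentially the same route as the paper: the sign $\rho(Z_{\tilde x}) = -Z_{\rho(\tilde x)}$ from the purely imaginary $\sqrt{-D_c}$, the relation $\rho(x) = W_N(\sigma(x))$ for some $\sigma \in \Gal(K_n/K)$, the compatibility of the cycles under the $G(M)$-, $\mathcal W$- and Galois actions, and then the Hecke- and Galois-equivariance of $\Phi_{f,K_n}$ together with $\varepsilon = (-1)^{r+1}w_{N^+,N^-}$. The only difference is that the step you single out as the main obstacle (conjugation reversing all local orientations, so that the full Fricke involution appears) is simply quoted in the paper from Tian's thesis rather than argued; your orientation sketch is the standard justification of that citation.
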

\begin{proof}
Let $Z_x \subseteq \mathcal A_{\tilde x}$ be the cycle associated to (a lift $\tilde x \in X^M$ of) the CM point $x=x(n)$ of conductor $n$. Then $\rho(Z_x) = - Z_{\rho(x)}$.

Besides, it is well-known that $\rho(x) = W_N(\sigma(x))$ for some $\sigma\in \Gal(K_n/K)$, where $W_N$ stands for the Atkin--Lehner involution associated with $N=N^+N^-$ acting on $X$ (see \cite[p. 135]{TianThesis}). By using the compatibility of the CM cycles with the actions of Galois and $\mathcal W$, and that these actions commute, it follows that 
\[
\rho(\mathcal Z_n) = (-1)^rW_{N,*}(\sigma_*(\mathcal Z_n)) = (-1)^r\sigma_*(W_{N,*}(\mathcal Z_n)).
\]

Finally, since $\Phi_{f,K_n}$ is Hecke- and Galois-equivariant, we deduce that 
\[
\rho(\Phi_{f,K_n}(\mathcal Z_n)) = (-1)^r w_{N^+,N^-}\sigma(\Phi_{f,K_n}(\mathcal Z_n)),
\]
which is equivalent to the relation we want, since $\varepsilon = (-1)^{r+1} w_{N^+,N^-}$.
\end{proof}

\subsection{Kolyvagin cohomology classes}

Recall that $V$ is a free $\mathcal O_F \otimes \ZZ_p$-module of rank $2$, where $\mathcal O_F$ stands for the ring of integers in the number field $F$ generated by the Hecke eigenvalues $a_n=a_n(f_{\infty})$. If $\mathcal O_{\wp}$ denotes the completion of $\mathcal O_F$ at a prime $\wp$ of $F$ above $p$, then there is a canonical decomposition $\mathcal O_F \otimes \ZZ_p = \oplus \mathcal O_{\wp}$, where the sum is over all such primes $\wp \mid p$. Fix once and for all a prime $\wp \mid p$ of $F$. Then $V_{\wp} := V \otimes_{\mathcal O_F \otimes \ZZ_p} \mathcal O_{\wp}$ is a free $\mathcal O_{\wp}$-module of rank $2$, and there are natural localization morphisms 
\[
\H^1(K_n,V) \longrightarrow \H^1(K_n,V_{\wp}), \quad y_n \longmapsto y_{n,\wp}.
\] 
From now on, we write $Y := V_{\wp}\otimes \QQ_p/\ZZ_p$ and $Y_s := Y_{p^s}$ for every $s\geq 1$, hence $Y_s = V_{\wp}/p^sV_{\wp}$ for $s\geq 1$. We remark that for the sake of simplicity, we did not consider these integral and mod $p^s$ representations in the Introduction, and rather stayed at the level of $F_{\wp}$-vector spaces. Indeed, the representation $V_{\wp}(f)$ of the Introduction corresponds to the $F_{\wp}$-vector space $V_{\wp} \otimes_{\mathcal O_{\wp}} F_{\wp} = V_{\wp} \otimes_{\ZZ_p} \QQ_p$. For each $s\geq 1$, we have a natural reduction map  
\[
\mathrm{red}_s: \H^1(K_n,V_{\wp}) \longrightarrow \H^1(K_n,Y_s)
\]
(and all such maps are compatible in the natural way when $s$ varies), and we denote by $L_s/K$ the Galois extension of $K$ cut out by the representation $Y_s$.

It follows directly from \cite[Proposition 6.3]{nekovar1992kolyvagin} that $Y_s^{\Gal(\bar{\QQ}/K_n)} = Y_s^{\Gal(\bar{\QQ}/K_1)}$ for every square-free integer $n$ which is a product of primes as above, and further there exists an integer $s_1\geq 0$, which does {\em not} depend on $s$, such that $Y_s^{\Gal(\bar{\QQ}/K_1)}$ (hence also $Y_s^{\Gal(\bar{\QQ}/K_n)}$, for all $n$) is killed by $p^{s_1}$. We state below a direct consequence of this (cf. \cite[Corollary 6.4]{nekovar1992kolyvagin}), for later reference.

\begin{corollary}\label{s1kercoker}
There is an integer $s_1 \geq 0$, independent from $s$, such that both the kernel and cokernel of the restriction map
\[
\mathrm{res}_{K_1,K_n}: \H^1(K_1,Y_s) \, \, \longrightarrow \, \, \H^1(K_n,Y_s)^{G_n}
\]
are killed by $p^{s_1}$.
\end{corollary}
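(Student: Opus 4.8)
The plan is to run the Hochschild--Serre (inflation--restriction) exact sequence for the Galois extension $K_n/K_1$ with coefficients in the finite module $Y_s$, and then to control the resulting group cohomology of $G_n = \Gal(K_n/K_1)$ by means of the bound on the $G_{K_1}$-invariants of $Y_s$ recorded just before the statement. This is essentially a restatement of \cite[Corollary 6.4]{nekovar1992kolyvagin}, adapted to the present module $Y_s = V_{\wp}/p^sV_{\wp}$.

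First I would write down the low-degree exact sequence
\[
0 \longrightarrow \H^1(G_n, Y_s^{G_{K_n}}) \longrightarrow \H^1(K_1, Y_s) \longrightarrow \H^1(K_n, Y_s)^{G_n} \longrightarrow \H^2(G_n, Y_s^{G_{K_n}}),
\]
where the middle arrow is $\mathrm{res}_{K_1,K_n}$. From it one reads off that $\ker(\mathrm{res}_{K_1,K_n}) = \H^1(G_n, Y_s^{G_{K_n}})$ and that $\mathrm{coker}(\mathrm{res}_{K_1,K_n})$ embeds into $\H^2(G_n, Y_s^{G_{K_n}})$. Next I would invoke the consequence of \cite[Proposition 6.3]{nekovar1992kolyvagin} recalled above, namely $Y_s^{G_{K_n}} = Y_s^{G_{K_1}}$ together with the existence of an integer $s_1\geq 0$, independent of both $s$ and $n$, that annihilates this group.

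The final ingredient is the elementary observation that for any group $G$, any $G$-module $M$ killed by $p^{s_1}$, and any $i\geq 0$, the group $\H^i(G,M)$ is also killed by $p^{s_1}$: multiplication by $p^{s_1}$ is the zero endomorphism of $M$, so by functoriality it induces the zero map on $\H^i(G,M)$, while this induced map coincides with multiplication by $p^{s_1}$ on $\H^i(G,M)$. Applying this with $G=G_n$, $M = Y_s^{G_{K_n}}$ and $i=1,2$ shows that $\H^1(G_n, Y_s^{G_{K_n}})$ and $\H^2(G_n, Y_s^{G_{K_n}})$ are killed by $p^{s_1}$, hence so are $\ker(\mathrm{res}_{K_1,K_n})$ and $\mathrm{coker}(\mathrm{res}_{K_1,K_n})$.

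There is no genuinely hard step here; the only point that needs attention is the \emph{uniformity} of $s_1$, i.e. that one exponent works simultaneously for all $s$ and all admissible squarefree $n$. But this is precisely what the cited result supplies: the identification $Y_s^{G_{K_n}} = Y_s^{G_{K_1}}$ removes the dependence on $n$, and the $s$-independent annihilation of $Y_s^{G_{K_1}}$ does the rest. I would therefore keep the argument short and simply point to \cite[Corollary 6.4]{nekovar1992kolyvagin}.
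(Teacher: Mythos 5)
Your argument is correct and is essentially the paper's own: the statement is quoted there as a direct consequence of Nekov\'a\v{r}'s Proposition 6.3 via his Corollary 6.4, whose proof is exactly your inflation--restriction sequence for $K_n/K_1$ combined with the facts that $Y_s^{G_{K_n}}=Y_s^{G_{K_1}}$ is killed by $p^{s_1}$ (uniformly in $s$ and $n$) and that cohomology of a module killed by $p^{s_1}$ is killed by $p^{s_1}$. Nothing further is needed.
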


Next we want to construct $G_n$-invariant classes in $\H^1(K_n,Y_s)$ starting from the localized classes $y_{n,\wp}$. To do so, we first need to restrict a bit more the indices $n$ that will be admissible in our system of classes, or rather on the primes that will be allowed as factors of $n$.

\begin{definition}
For each $s\geq 1$, we define $\Sigma_1(s)$ to be the set of rational primes $\ell\nmid 2pND_K$ such that the conjugacy class $\Frob_{\ell}(K_cL_s/\QQ)$ of the arithmetic Frobenius automorphism $\Frob_{\ell}$ in $\Gal(K_cL_s/\QQ)$ coincides with the conjugacy class $\Frob_{\infty}(K_cL_s/\QQ)$ of complex conjugation. We then put, for every $k\geq 1$,
\[
\Sigma_k(s) = \{ \ell_1\cdots \ell_k: \ell_i \in \Sigma_1(s) \text{ pairwise distinct}\}.
\]
Primes in $\Sigma_1(s)$ will be referred to as {\em $s$-Kolyvagin primes}, or just {\em Kolyvagin primes}.
\end{definition}

The condition on the conjugacy classes of Frobenius and complex conjugation can be rephrased by saying that $\Frob_{\ell}$ and the complex conjugation $\Frob_{\infty}$ have the same characteristic polynomial modulo $p^s$, that is,
\[
 x^2-1 \equiv \ell x^2 - \frac{a_{\ell}}{\ell^r} x + 1 \pmod{p^s},
\]
This is equivalent to the assertion that 
\[
\left(\frac{-D_K}{\ell}\right) = -1 \quad \text{and} \quad a_{\ell} \equiv \ell + 1 \equiv 0 \pmod{p^s}.
\]
In particular, notice that primes $\ell\in \Sigma_1(s)$ are inert in $K$.

Let $\ell \in \Sigma_1(s)$ be a Kolyvagin prime, and recall that we denoted by $\sigma_{\ell}$ a fixed generator $G_{\ell} = \Gal(K_\ell/K_1)$. Recall also {\em Kolyvagin's trace} and {\em derivative} operators
\[
\Tr_\ell = \sum_{i=0}^{|G_\ell|-1} \sigma_\ell^i, \quad D_\ell = \sum_{i=0}^{|G_\ell| -1} i \sigma_\ell^i \, \, \in \, \, \ZZ[G_\ell],
\]
related by the identities
\[
(\sigma_\ell -1)D_\ell = |G_\ell| - \Tr_\ell = \ell + 1 - \Tr_\ell.
\]
If $n= \ell_1 \cdots \ell_k \in \Sigma_k(s)$, then one also defines
\[
D_n := D_{\ell_1} \cdots D_{\ell_k} \, \in \, \ZZ[G_n].
\]

\begin{proposition}
The cohomology class $D_n\mathrm{red}_s(y_{n,\wp}) \in \H^1(K_n,Y_s)$ is $G_n$-invariant, i.e. it belongs to $\H^1(K_n,Y_s)^{G_n}$.
\end{proposition}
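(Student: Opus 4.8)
The plan is to run the classical Kolyvagin argument (as in \cite[\S 6]{nekovar1992kolyvagin}), reducing $G_n$-invariance to a one-prime-at-a-time telescoping computation that feeds on the global norm compatibility of Proposition \ref{gnc} together with the defining congruences of $s$-Kolyvagin primes. Since $\Gal(K_n/K)$ is abelian and $G_n = \Gal(K_n/K_1) \simeq \prod_{\ell \mid n} G_\ell$ with each $G_\ell$ cyclic generated by $\sigma_\ell$, the group $G_n$ is generated by $\{\sigma_\ell : \ell \mid n\}$, so it suffices to show that $(\sigma_\ell - 1)$ kills $D_n\,\mathrm{red}_s(y_{n,\wp})$ in $\H^1(K_n, Y_s)$ for each prime $\ell \mid n$.

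Fix such an $\ell$, write $n = \ell m$, and note that $D_n = D_\ell D_m$ inside the commutative ring $\ZZ[G_n]$, so that
\[
(\sigma_\ell - 1)\, D_n\,\mathrm{red}_s(y_{n,\wp}) = D_m\,(\sigma_\ell - 1)\, D_\ell\,\mathrm{red}_s(y_{n,\wp}).
\]
By the identity $(\sigma_\ell - 1)D_\ell = \ell + 1 - \Tr_\ell$, it is therefore enough to prove that $(\ell + 1 - \Tr_\ell)\,\mathrm{red}_s(y_{n,\wp}) = 0$ in $\H^1(K_n, Y_s)$.

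This is where Proposition \ref{gnc} enters. Under the identification $\Gal(K_n/K_m) \simeq G_\ell$, the operator $\Tr_\ell$ acting on $\H^1(K_n, Y_s)$ coincides with $\sum_{\sigma \in \Gal(K_n/K_m)} \sigma = \res_{K_m,K_n} \circ \cor_{K_n, K_m}$, the standard corestriction--restriction composition. Localization at $\wp$ and reduction modulo $p^s$ are functorial and commute with corestriction, so Proposition \ref{gnc} yields $\cor_{K_n,K_m}(\mathrm{red}_s(y_{n,\wp})) = a_\ell\,\mathrm{red}_s(y_{m,\wp})$, and hence
\[
(\ell + 1 - \Tr_\ell)\,\mathrm{red}_s(y_{n,\wp}) = (\ell+1)\,\mathrm{red}_s(y_{n,\wp}) - a_\ell\,\res_{K_m,K_n}(\mathrm{red}_s(y_{m,\wp})).
\]
Both terms vanish: the module $Y_s = V_\wp/p^sV_\wp$ is $p^s$-torsion while $\ell + 1 \equiv 0 \pmod{p^s}$, which kills the first term; and since $\ell \in \Sigma_1(s)$ the Frobenius $\Frob_\ell$ acts on $Y_s$ with the same characteristic polynomial as complex conjugation, in particular $a_\ell \equiv 0 \pmod{p^s}$ in $\mathcal O_\wp$, so multiplication by $a_\ell$ annihilates $Y_s$ and the second term vanishes as well. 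Thus $(\sigma_\ell - 1) D_n\,\mathrm{red}_s(y_{n,\wp}) = 0$ for every $\ell \mid n$, and $D_n\,\mathrm{red}_s(y_{n,\wp})$ is $G_n$-invariant.

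I do not expect a genuine obstacle in this argument: the only slightly delicate points are the precise identification of the factor $\Gal(K_n/K_m)$ with $G_\ell$ so that $\Tr_\ell$ really equals $\res \circ \cor$, and the verification that both $\ell+1$ and $a_\ell$ annihilate the finite module $Y_s$ --- the latter being immediate from the characterization of $\Sigma_1(s)$ via characteristic polynomials modulo $p^s$ together with $p^s Y_s = 0$.
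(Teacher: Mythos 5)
Your argument is correct and is essentially the paper's own proof: the same reduction to showing $(\sigma_\ell-1)$ kills the class, via $(\sigma_\ell-1)D_\ell = \ell+1-\Tr_\ell$, the identification $\Tr_\ell = \res_{K_m,K_n}\circ\cor_{K_n,K_m}$ combined with Proposition \ref{gnc}, and the congruences $\ell+1 \equiv a_\ell \equiv 0 \pmod{p^s}$ defining Kolyvagin primes. The extra care you take in spelling out the identification of $\Gal(K_n/K_m)$ with $G_\ell$ and the compatibility of reduction with corestriction is fine but adds nothing beyond what the paper does implicitly.
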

\begin{proof}
Let $\ell$ be a (Kolyvagin) prime dividing $n$ and set $n=m\ell$. Then we have 
\[
(\sigma_{\ell}-1)D_n y_{n,\wp} = D_{m}(\ell+1-\Tr_{\ell})y_{n,\wp} =  D_{m}(\ell+1)y_{n,\wp} - a_{\ell}D_m y_{m,\wp},
\]
where in the second equality we use $\mathrm{res}_{K_m,K_n}\circ\mathrm{cor}_{K_n,K_m} = \Tr_{\ell}$ and Proposition \ref{gnc}. Now since $\ell\in \Sigma_1(s)$ we know that $\ell+1 \equiv a_{\ell} \equiv 0 \pmod{p^s}$, hence the statement follows.
\end{proof}

By virtue of Corollary \ref{s1kercoker}, this proposition implies that, possibly up to multiplying by $p^{s_1}$ (where recall that $s_1$ is independent of $s$), the derived classes $D_n\mathrm{red}_s(y_{n,\wp}) \in \H^1(K_n,Y_s)$ can be lifted to $\H^1(K_1,Y_s)$. This lifting is often referred to as ``Kolyvagin's corestriction'', and is reviewed in detail in \cite[Section 7]{nekovar1992kolyvagin}.

More precisely, continue to fix $s \gg 0$, put $s' = s+s_1$ and require Kolyvagin primes to lie in $\Sigma_1(s')$ (so that $a_{\ell}\equiv \ell + 1 \equiv 0 \pmod{p^{s'}}$ for all primes $\ell$ dividing $n$). Multiplication by $p^{s_1}$ induces a homomorphism $j: Y_{s'} \to Y_{s}$. A system of cohomology classes 
\[
\kappa_s(n) \in \H^1(K,Y_s)
\]
can be defined in the following way. For $n=1$, just set $\kappa_s(1):=\mathrm{cor}_{K_1,K}(\mathrm{red}_s(y_{1,\wp}))$. If $n=\ell$ is a Kolyvagin prime, then $p^{s_1}D_{\ell}\mathrm{red}_{s'}(y_{\ell,\wp})=\mathrm{res}_{K_1,K_{\ell}}(z_{\ell})$ for some class $z_{\ell}\in \H^1(K_1,Y_{s'})$, and two choices of $z_{\ell}$ differ by an element in the image of $\H^1(K,Y_{s_1}) \to \H^1(K,Y_{s'})$, and hence in the kernel of $j_{*}$. In view of this, the class
\[
\kappa_s(\ell) := \mathrm{cor}_{K_1,K}(j_*(z_{\ell})) \in \H^1(K,Y_s),
\]
is well-defined and does not depend on the choice of $z_{\ell}$. More generally, if $n=\ell_1\cdots \ell_k$ is a (square-free) product of Kolyvagin primes, then $p^{s_1}D_n\mathrm{red}_{s'}(y_{n,\wp}) = \mathrm{res}_{K_1,K_{\ell}}(z_n)$ for some $z_n \in \H^1(K_1,Y_{s'})$. As before, the class
\[
\kappa_s(n) := \mathrm{cor}_{K_1,K}(j_*(z_n)) \in \H^1(K,Y_s),
\]
is well-defined.

\subsection{Localization away from $p$}

Having defined the cohomology classes $\kappa_s(n)$, we end this section by describing their local behaviour at places $v$ of $K$ not dividing $p$. Of special interest are the localizations at places above the prime factors of $n$. We may start by fixing some notation. Let $\ell\in \Sigma_1(s)$ be a Kolyvagin prime dividing $n$, write $n = m\ell$ and let $\lambda$ be the (unique) prime of $K$ above $\ell$. We fix a prime $\lambda_n$ of $K_n$ above $\ell$, which uniquely determines primes $\lambda_m$, $\lambda_{\ell}$ and $\lambda_1$ of $K_m$, $K_{\ell}$ and $K_1$, respectively, all of them over $\ell$. Recall that $\lambda=(\ell)$ splits completely in the extension $K_m/K$, whereas $\lambda_m$ is totally ramified in $K_n/K_m$, hence $\lambda_m = (\lambda_n)^{\ell+1}$. At the level of completions, we have $K_{\lambda_n} = K_{\lambda_{\ell}}$ and $K_{\lambda_m}=K_{\lambda_1}=K_{\lambda}$, and our choice of $\lambda_n$ determines also an isomorphism
\[
\Gal(K_{\lambda_{\ell}}/K_{\lambda}) = \Gal(K_{\lambda_n}/K_{\lambda_m}) \simeq \Gal(K_{\ell}/K_1) = \langle \sigma_{\ell}\rangle.
\]

In particular, the choice of $\lambda_n$ identifies the generator $\sigma_{\ell}$ with an element of $\Gal(K_{\lambda_{\ell}}/K_{\lambda})$. Such an element can be lifted to a generator $\tau_{\ell}$ of $\Gal(K_{\lambda}^{\mathrm{tr}}/K_{\lambda}^{\ur}) \simeq \hat{\ZZ}^{(\ell)}(1)$, where $K_{\lambda}^{\mathrm{tr}}$ is the maximal tamely ramified extension of $K_{\lambda}$ and $\hat{\ZZ}^{(\ell)} = \prod_{q\neq\ell} \ZZ_q$. This lift is well-defined modulo $(\ell+1)\hat{\ZZ}^{(\ell)}(1)$, and under the canonical projection $\hat{\ZZ}^{(\ell)}(1) \to \mu_{p^{s'}}$ it is sent to some primitive $p^{s'}$-th root of unity, say $\zeta_{\lambda,s'} \in \mu_{p^{s'}}(K_{\lambda})$. Tame duality then yields (cf. \cite[Proposition 8.1]{nekovar1992kolyvagin}) $\mathcal O_{\wp}$-linear canonical isomorphisms
\begin{equation}\label{def-alpha}
\alpha_{\lambda,s'}: \H^1_{\mathrm{ur}}(K_{\lambda},Y_{s'}) \stackrel{\simeq}{\longrightarrow} Y_{s'}(K_{\lambda}),
\end{equation}
\begin{equation}\label{def-beta}
\beta_{\lambda,s'}: \H^1(K_{\lambda}^{\mathrm{ur}},Y_{s'}) \stackrel{\simeq}{\longrightarrow} \mathrm{Hom}(\mu_{p^{s'}}(K_{\lambda}),Y_{s'}(K_{\lambda})) \simeq Y_{s'}(K_{\lambda}),
\end{equation}
with $\beta_{\lambda,s'}$ being evaluation at the root of unity $\zeta_{\lambda,s'}$, and a perfect pairing 
\begin{equation}\label{def-pairing-ur}
\langle , \rangle_{\lambda,s'}: \H^1_{\mathrm{ur}}(K_{\lambda},Y_{s'}) \times \mathrm H^1(K_{\lambda}^{\mathrm{ur}},Y_{s'}') \, \, \longrightarrow \, \, \ZZ/p^{s'}\ZZ,
\end{equation}
where $Y_{s'}' = \Hom(Y_{s'},\mu_{p^{s'}})$. Further, the isomorphism 
\begin{equation}\label{def-phi-ur}
\phi_{\lambda, s'} = \beta_{\lambda,s'}^{-1} \circ \alpha_{\lambda,s'}:  \mathrm H^1_{\mathrm{ur}}(K_{\lambda},Y_{s'}) \stackrel{\simeq}{\longrightarrow} \mathrm H^1(K_{\lambda}^{\mathrm{ur}},Y_{s'})
\end{equation}
interchanges cocycles with the same values on $\Frob(\ell)$ and $\tau_{\ell} \pmod{p^{s'}}$. After identifying $Y_{s'}$ with its dual $Y_{s'}'$ via the pairing $[,]_{s'}$ from Proposition \ref{prop:V}, the pairing $\langle , \rangle_{\lambda,s'}$ satisfies the relation
\begin{equation}\label{eqn:pairings_zeta}
\zeta_{\lambda,s'}^{\langle x, \phi_{\lambda,s'}(y)\rangle_{\lambda,s'}} = [\alpha_{\lambda,s'}(x),\alpha_{\lambda,s'}(y)]_{s'}.
\end{equation}

Finally, localizing the inflation-restriction sequence for $K_n/K_1$ yields a canonical splitting
\[
\mathrm H^1(K_{\lambda},Y_{s'}) \simeq \mathrm H^1_{\mathrm{ur}}(K_{\lambda},Y_{s'}) \oplus \mathrm H^1(K_{\lambda}^{\mathrm{ur}},Y_{s'}).
\]
Both factors in this splitting are isomorphic to $Y_{s'}(K_{\lambda})$, via the canonical $\mathcal O_{\wp}$-linear isomorphisms $\alpha_{\lambda,s'}$ and $\beta_{\lambda,s'}$, respectively.

On the other hand, complex conjugation $\rho \in \Gal(K/\QQ) = \Gal(K_{\lambda}/\QQ_{\ell})$ acts naturally on several groups involved in our discussion. We denote by a superscript $\pm$ the corresponding $(\pm)$-eigenspaces for this action. Notice that $\mu_{p^{s'}}(K_{\lambda}) = \mu_{p^{s'}}(K_{\lambda})^-$ by the assumptions on $\ell$, and each of the eigenspaces $Y_{s'}(K_{\lambda})^{\pm}$ is a free $\mathcal O_{\wp}/p^{s'}$-module of rank $1$. Since the local pairing $\langle , \rangle_{\lambda, s'}$ in \eqref{def-pairing-ur} is $\rho$-equivariant, it induces non-degenerate pairings
\[
\langle , \rangle_{\lambda,s'}^{\pm}: \H^1_{\ur}(K_{\lambda},Y_{s'})^{\pm} \times \H^1(K_{\lambda}^{\ur},Y_{s'})^{\pm} \, \, \longrightarrow \, \, \ZZ/p^{s'}\ZZ.
\]
In contrast, the isomorphism $\phi_{\lambda, s'}$ is $\rho$-antiequivariant, and therefore it induces isomorphisms
\[
\H^1_{\ur}(K_{\lambda},Y_{s'})^{\pm} \simeq \H^1(K_{\lambda}^{\ur},Y_{s'})^{\mp}.
\]

The next proposition summarizes the relevant properties of the localizations of the Kolyvagin cohomology classes $\kappa_s(n)$ at places of $K$ outside of $p$.

\begin{proposition}\label{localizationkappas}
Let $v$ be a non-archimedean place of $K$ and $n$ be a product of Kolyvagin primes. 
\begin{itemize}
\item[i)] $\kappa_s(n) \in \H^1(K,Y_s)^{\varepsilon_{n}}$, where $\varepsilon_n = (-1)^{n-1}\varepsilon$.
\item[ii)] If $v\nmid Nnp$, then $\kappa_s(n)_v \in \H^1_{\ur}(K_v, Y_s)$.
\item[iii)] There exists a constant $s_2$ such that $p^{s_2}\H^1(K_v,V/p^sV)$ for all places $v\mid N$ of $K$ and all $s\geq 0$. In particular, if $v\mid N$ then $p^{s_2}\kappa_s(n)_v = 0$.
\item[iv)] If $n=m\ell$ and $\lambda =(\ell)$ is the only prime of $K$ above $\ell$, then 
\[
\left(\frac{(-1)^r \varepsilon_n a_{\ell}-(\ell+1)}{p^{s'}}\right) \kappa_s(n)_{\ell} = \left(\frac{(\ell+1)\varepsilon_n - a_{\ell}}{p^{s'}}\right) p^{ds_1} \phi_{\lambda,s}(\kappa_s(m)_{\lambda}),
\]
where $d=1$ if $n$ is a product of two primes, and $d=0$ otherwise. If both $((\ell+1) \pm a_{\ell})/p^{s'}$ are units in $\mathcal O_{\wp}$, then the above relation simplifies to 
\[
\kappa_s(n)_{\lambda} = u_{\ell,\varepsilon_{\ell}}p^{ds_1} \phi_{\lambda,s}(\kappa_s(m)_{\lambda}), \quad u_{\ell,\varepsilon_{\ell}} \in (\mathcal O_{\wp}/p^s)^{\times}.
\]
\end{itemize}
\end{proposition}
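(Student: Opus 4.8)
The plan is to prove the four assertions one by one, in the order stated, following closely the strategy of Nekov{\'a}{\v{r}} \cite[Section 8]{nekovar1992kolyvagin}, with the input on CM cycles supplied by the propositions established above.

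\emph{Part (i).} The key point is that the complex conjugation compatibility $\rho(y_n) = -\varepsilon\,\sigma(y_n)$, established earlier, passes through the construction of $\kappa_s(n)$. I would first recall that complex conjugation acts on $\Gal(K_n/K)$, hence on $\ZZ[G_n]$, and on the derivative operator $D_n$; a direct computation shows $\rho\, D_n \equiv (-1)^{k}\, D_n\, \sigma' \pmod{(\ell_i+1)}$ for a suitable $\sigma' \in G_n$ (here $n$ has $k$ prime factors, and one uses $\rho\,\sigma_\ell\,\rho^{-1} = \sigma_\ell^{-1}$ together with $\sum i\sigma_\ell^{-i} \equiv -\sum i\sigma_\ell^i \pmod{\ell+1}$ up to the trace term). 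Combining this with $\rho(y_{n,\wp}) = -\varepsilon\,\sigma(y_{n,\wp})$, applying $\mathrm{red}_{s'}$, $p^{s_1}$-multiplication, Kolyvagin's corestriction (which is $\Gal$-compatible), and finally $\mathrm{cor}_{K_1,K}$ (which kills the $G_n$-part), one obtains $\rho(\kappa_s(n)) = (-1)^{k}\cdot(-\varepsilon)\cdot\kappa_s(n) = (-1)^{n-1}\varepsilon\cdot\kappa_s(n)$ after absorbing signs appropriately (note $k \equiv n-1$ in the relevant parity). I should be careful that $\varepsilon = (-1)^{r+1}w_{N^+,N^-}$ and that the factor $(-1)^r$ appearing in $\rho(\mathcal Z_n) = (-1)^r\sigma_*(W_{N,*}(\mathcal Z_n))$ has already been folded into the statement $\rho(y_n) = -\varepsilon\sigma(y_n)$.

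\emph{Parts (ii) and (iii).} For (ii), if $v \nmid Nnp$ then $\mathcal A^r$ has good reduction at $v$ and $y_n$ is represented by an algebraic cycle on a smooth proper model, so its Abel--Jacobi image localizes into the finite/unramified part $\H^1_{\ur}(K_v, V)$ (this is the standard fact that Abel--Jacobi classes of cycles on varieties with good reduction are crystalline, hence unramified when $v \nmid p$; cf. \cite{nekovar1992kolyvagin}). Corestriction and the various operations defining $\kappa_s(n)$ preserve the unramified subgroup at such $v$, and reduction mod $p^s$ maps $\H^1_{\ur}(K_v,V_\wp)$ into $\H^1_{\ur}(K_v,Y_s)$; this gives $\kappa_s(n)_v \in \H^1_{\ur}(K_v,Y_s)$. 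For (iii), the point is uniform boundedness of $\H^1(K_v, V/p^sV)$ for $v \mid N$: since $\H^0(K_v, V\otimes\QQ_p/\ZZ_p)$ is finite (by Proposition \ref{prop:V}(iv), the inertia-coinvariants/invariants are controlled and $\mathrm{Fr}(q)-1$ acts invertibly on a finite-index submodule), the long exact sequence in cohomology shows $\H^1(K_v, V/p^sV)$ is killed by a fixed power $p^{s_2}$ independent of $s$; localizing $\kappa_s(n)$ then gives $p^{s_2}\kappa_s(n)_v = 0$. (I would also fix the evident typo: the statement should read ``$p^{s_2}\H^1(K_v,V/p^sV) = 0$''.)

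\emph{Part (iv).} This is the main obstacle and the heart of the proposition. The strategy is the standard one comparing the ramified and unramified components of $\kappa_s(n)_\lambda$ via the local norm-compatibility relation $y_{n,\lambda_n} = \mathrm{Frob}(\ell)\big(\mathrm{res}(y_{n,\lambda_n})\big)$ already proved, together with the global relation $(\sigma_\ell - 1)D_n y_{n,\wp} = D_m(\ell+1)y_{n,\wp} - a_\ell D_m y_{m,\wp}$ from the proof that $D_n\mathrm{red}_s(y_{n,\wp})$ is $G_n$-invariant. Concretely: decompose $\kappa_s(n)_\lambda = \kappa_s(n)_\lambda^{\ur} \oplus \kappa_s(n)_\lambda^{\mathrm{sing}}$ under the splitting $\H^1(K_\lambda,Y_{s'}) \simeq \H^1_{\ur} \oplus \H^1(K_\lambda^{\ur})$. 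The unramified part is computed by evaluating the relevant cocycle at $\mathrm{Frob}(\ell)$ and relating it to $\kappa_s(m)_\lambda$ via the Euler factor, while the singular part is computed by evaluating at $\tau_\ell$ and using that $D_n$ introduces the factor $\sum_{i} i\,\sigma_\ell^i$; the map $\phi_{\lambda,s}$ interchanges these two evaluations. The coefficients $(-1)^r\varepsilon_n a_\ell - (\ell+1)$ and $(\ell+1)\varepsilon_n - a_\ell$ arise from combining the Frobenius eigenvalue relation in Proposition \ref{prop:V}(iii) (characteristic polynomial $1 - (a_\ell/\ell^r)x + \ell x^2$, so $\mathrm{Frob}(\ell)$ and $\mathrm{Frob}(\ell)^{-1}$ contribute $a_\ell/\ell^r$ and the norm contributes $\ell$) with the $\rho$-eigenvalue $\varepsilon_n$ of $\kappa_s(n)$ from part (i) and the $\rho$-antiequivariance of $\phi_{\lambda,s}$; the sign $(-1)^r$ comes from the Tate twist $r+1$ relating $V$ to the self-dual normalization and from the factor $(-1)^r$ in $\rho(\mathcal Z_n)$. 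The $p^{ds_1}$ accounts for the extra application of Kolyvagin's corestriction when $n=m\ell$ has $m$ itself composite (i.e. $\kappa_s(m)$ already required a lift). The final simplification is immediate once $((\ell+1)\pm a_\ell)/p^{s'}$ are units: invert the left coefficient and absorb the right one into $u_{\ell,\varepsilon_\ell} \in (\mathcal O_\wp/p^s)^\times$. The delicate bookkeeping — tracking the exact power of $p$, the precise sign conventions from \eqref{eqn:pairings_zeta}, and the compatibility of $\phi_{\lambda,s'}$ with $j_*: Y_{s'} \to Y_s$ — is where most of the work lies; I would follow \cite[Proposition 8.2]{nekovar1992kolyvagin} verbatim, checking that the replacement of Heegner cycles on Kuga--Sato varieties over modular curves by CM cycles on $\mathcal A^r$ changes nothing, since all that is used are the norm-compatibility and local-compatibility relations, which we have established in exactly the same form.
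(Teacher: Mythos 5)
Your proposal follows essentially the same route as the paper's proof: part (i) from the action of $\rho$ on the CM cycles (hence on the $y_n$) combined with the relation $\rho D_n = \pm D_n \rho$, part (ii) from good reduction/unramifiedness of $K_n/K$ and $y_n$ at $v$, and parts (iii) and (iv) by deferring to Nekov\'a\v{r}'s arguments (his Lemma 10.1 for the uniform bound at $v\mid N$, and his localization of Kolyvagin's corestriction in \cite[Section 9, Proposition 10.2(4)]{nekovar1992kolyvagin}), which is exactly what the paper does. The additional sketches you give (the $\H^0$-finiteness heuristic for (iii), which strictly also needs control of $\H^2$ via local duality or the local Euler characteristic, and the $\mathrm{Frob}(\ell)$/$\tau_{\ell}$ evaluation bookkeeping for (iv)) are consistent with that reference, so there is no substantive gap.
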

\begin{proof}
The first assertion is a direct consequence of the action of $\rho$ on CM cycles (and hence, on the classes $y_n$) and the relation with Kolyvagin's derivative operator, namely $\rho D_n = (-1)^n D_n \rho$. Statement ii) is clear since both $K_n/K$ and $y_n$ are unramified at the place $v$, and iii) follows from \cite[Lemma 10.1]{nekovar1992kolyvagin}. Finally, iv) is obtained by applying Nekov\'a\v r's discussion on localization of Kolyvagin's corestriction in \cite[Section 9]{nekovar1992kolyvagin} (see also \cite[Proposition 10.2 (4)]{nekovar1992kolyvagin}).
\end{proof}

\begin{corollary}\label{cor:pairing}
If both $\ell+1 \pm a_{\ell}$ divide $p^{s'+e}$ in $\mathcal O_{\wp}$, then for all $t_{\lambda} \in \H^1_{\ur}(K_{\lambda},Y_s)$
\[
\zeta_{\lambda,s}^{\langle t_{\lambda}, p^e\kappa_s(n)_{\lambda}\rangle_{\lambda,s}} = [\alpha_{\lambda,s}(t_{\lambda}), u_{\ell,\varepsilon_n}p^{ds_1+e}\alpha_{\lambda,s}(\kappa_s(n/\ell))]_s,
\]
where $\zeta_{\lambda,s} = \zeta_{\lambda,s'}^{p^{s_1}}$.
\end{corollary}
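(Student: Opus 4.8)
\noindent The plan is to read the corollary off from the integral identity of Proposition~\ref{localizationkappas}(iv) by clearing the power $p^{s'}$ in its coefficients, and then to feed the resulting relation into the comparison \eqref{eqn:pairings_zeta} between the tame local pairings. Throughout write $m := n/\ell$, so that $n = m\ell$, and let $\lambda$ be the unique prime of $K$ above $\ell$. Recall that $\ell \in \Sigma_1(s')$, so $p^{s'}$ divides both $\ell+1$ and $a_\ell$ in $\OO_\wp$; hence each of $(\ell+1)\pm a_\ell$, as well as $(-1)^r\varepsilon_n a_\ell - (\ell+1)$ and $(\ell+1)\varepsilon_n - a_\ell$ (each of which equals one of $\pm((\ell+1)\pm a_\ell)$), is divisible by $p^{s'}$, and by hypothesis all of them divide $p^{s'+e}$ in $\OO_\wp$. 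Recall also that $\lambda \nmid Nmp$, so by Proposition~\ref{localizationkappas}(ii) the class $\kappa_s(m)_\lambda$ lies in $\H^1_{\ur}(K_\lambda,Y_s)$ and $\phi_{\lambda,s}(\kappa_s(m)_\lambda)$ lies in the complementary summand $\H^1(K_\lambda^{\ur},Y_s)$ of the canonical splitting, where the singular part of $\kappa_s(n)_\lambda$ --- the one appearing in Proposition~\ref{localizationkappas}(iv) and paired against $t_\lambda$ in the statement --- also lives.

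First I would multiply both sides of the identity in Proposition~\ref{localizationkappas}(iv) by $p^{s'+e}/\big((-1)^r\varepsilon_n a_\ell - (\ell+1)\big) \in \OO_\wp$, which is integral by the remark just made. Since the left-hand coefficient there is $\big((-1)^r\varepsilon_n a_\ell - (\ell+1)\big)/p^{s'}$, the left side collapses to $p^e\kappa_s(n)_\lambda$, while the right side becomes $u_{\ell,\varepsilon_n}\,p^{ds_1+e}\,\phi_{\lambda,s}(\kappa_s(m)_\lambda)$, where $u_{\ell,\varepsilon_n} = \big((\ell+1)\varepsilon_n - a_\ell\big)/\big((-1)^r\varepsilon_n a_\ell - (\ell+1)\big)$ is the unit of Proposition~\ref{localizationkappas}(iv). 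As $\phi_{\lambda,s}$ is an $\OO_\wp$-linear isomorphism, this gives, in $\H^1(K_\lambda^{\ur},Y_s)$,
\[
p^e\kappa_s(n)_\lambda = \phi_{\lambda,s}\big(u_{\ell,\varepsilon_n}\,p^{ds_1+e}\,\kappa_s(m)_\lambda\big).
\]

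Next I would pass to the level-$s$ form of \eqref{eqn:pairings_zeta}. Since $\zeta_{\lambda,s} = \zeta_{\lambda,s'}^{p^{s_1}}$ is a primitive $p^s$-th root of unity and the isomorphisms $\alpha_{\lambda,\bullet}$, $\phi_{\lambda,\bullet}$ together with the pairings $[\,,\,]_\bullet$, $\langle\,,\,\rangle_{\lambda,\bullet}$ are compatible with reduction modulo $p^s$ (i.e.\ with the map $j$ multiplying by $p^{s_1}$), the identity \eqref{eqn:pairings_zeta} descends to
\[
\zeta_{\lambda,s}^{\langle x,\,\phi_{\lambda,s}(y)\rangle_{\lambda,s}} = \big[\alpha_{\lambda,s}(x),\,\alpha_{\lambda,s}(y)\big]_s, \qquad x,y \in \H^1_{\ur}(K_\lambda,Y_s).
\]
Applying this with $x = t_\lambda$ and $y = u_{\ell,\varepsilon_n}\,p^{ds_1+e}\,\kappa_s(m)_\lambda \in \H^1_{\ur}(K_\lambda,Y_s)$, rewriting the exponent on the left by the displayed relation of the previous paragraph, and using the $\OO_\wp$-linearity of $\alpha_{\lambda,s}$ on the right, gives exactly
\[
\zeta_{\lambda,s}^{\langle t_\lambda,\,p^e\kappa_s(n)_\lambda\rangle_{\lambda,s}} = \big[\alpha_{\lambda,s}(t_\lambda),\,u_{\ell,\varepsilon_n}\,p^{ds_1+e}\,\alpha_{\lambda,s}(\kappa_s(n/\ell))\big]_s,
\]
which is the assertion.

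The step that I expect to require the most care is the denominator-clearing in the second paragraph: one must verify that the hypothesis $\ell+1\pm a_\ell \mid p^{s'+e}$, combined with $\ell+1, a_\ell \in p^{s'}\OO_\wp$, is precisely what makes $p^{s'+e}/\big((-1)^r\varepsilon_n a_\ell - (\ell+1)\big)$ integral and $u_{\ell,\varepsilon_n}$ a unit of $\OO_\wp/p^s$ (this is the content of the last part of Proposition~\ref{localizationkappas}(iv), using $\ell \equiv -1$ and $a_\ell \equiv 0 \pmod{p^{s'}}$), and that the equality is one of singular local classes, so that pairing it against the unramified class $t_\lambda$ via $\langle\,,\,\rangle_{\lambda,s}$ is legitimate. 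Once that relation is secured, the rest is the purely formal transport through \eqref{eqn:pairings_zeta} carried out above, together with the bilinearity of $[\,,\,]_s$.
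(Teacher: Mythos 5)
Your derivation is correct and follows precisely the route the paper intends: Corollary \ref{cor:pairing} is stated without a separate proof as an immediate consequence of Proposition \ref{localizationkappas}(iv), cleared of denominators via the divisibility hypothesis, combined with the level-$s$ analogue of \eqref{eqn:pairings_zeta}. The only slip is the parenthetical claim that $u_{\ell,\varepsilon_n}$ is a unit of $\mathcal O_{\wp}/p^s$: under the corollary's hypothesis alone this ratio need not be a unit (that is exactly what the stronger assumption in the last part of Proposition \ref{localizationkappas}(iv) buys), but your argument never needs it --- only the integrality of $u_{\ell,\varepsilon_n}p^{e} = \bigl((\ell+1)\varepsilon_n-a_{\ell}\bigr)p^{e}/\bigl((-1)^r\varepsilon_n a_{\ell}-(\ell+1)\bigr)$, which your own computation already provides.
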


\section{The Selmer group}\label{sec:Selmer}

So far, we have seen that, possibly up to multiplying by $p^{s_2}$, the cohomology classes $\kappa_s(n)$ are unramified at every place of $K$ not dividing $np$. Further, their localizations at the primes of $K$ dividing $n$ are subject to the relations in Proposition \ref{localizationkappas} iv).

Now if $v$ is a place of $K$ above $p$, then the $\QQ_p$-vector space $W := V \otimes \QQ$ is equipped with a continuous $\Gal(\bar K_v/K_v)$-action, and following Bloch and Kato it is customary to set 
\[
\H^1_{\texttt f}(K_v,W) := \mathrm{ker}(\H^1(K_v,W) \to \H^1(K_v,W \otimes B_{\mathrm{cris}}))
\]
and 
\[
\H^1_{\texttt g}(K_v,W) := \mathrm{ker}(\H^1(K_v,W) \to \H^1(K_v,W \otimes B_{\mathrm{dR}})),
\]
where $B_{\mathrm{cris}}$ and $B_{\mathrm{dR}}$ are Fontaine's period rings. In order to deal with the representations $V$ and $Y_s$, if $? \in \{\texttt f, \texttt g\}$ we denote by $\mathrm H^1_?(K_v,V) \subseteq \mathrm H^1(K_v,V)$ the preimage of $\mathrm H^1_?(K_v,W)$ under the natural homomorphism $\H^1(K_v,V) \to \H^1(K_v,W)$, and by 
$\H^1_?(K_v,Y_s) \subseteq \H^1(K_v,Y_s)$ the image of $\H^1_?(K_v,V)$ under the natural reduction homomorphism 
$\H^1(K_v,V) \to \H^1(K_v,Y_s)$.

As in \cite[Lemma 11.1]{nekovar1992kolyvagin}, the fact that $V\otimes \QQ$ is crystalline implies that if $v$ is a prime of $K$ dividing $p$ and $K'/K_v$ is any finite extension, then $\H^1_{\mathrm{\texttt f}}(K',V) = \H^1_{\mathrm{\texttt g}}(K',V)$ and the Abel--Jacobi map over $K'$ factors through $\H^1_{\mathrm{\texttt f}}(K',V)$. In particular, since $\H^1_{\mathrm{\texttt f}}(K_v,Y_s)$ depends only on the action of the inertia subgroup of $\Gal(\bar K_v/K_v)$, for every square-free product of Kolyvagin primes $n$ and any prime $v$ of $K$ above $p$ it follows that $\kappa_s(n)_v \in \H^1_{\mathrm{\texttt f}}(K_v,Y_s)$ (because $K_n/K$ is unramified at $v$). This leads naturally to the definition of the ($p^s$-th) Selmer group:

\begin{definition}
The ($p^s$-th) {\em Selmer group} $\Sel^{(s)}_{\wp}(f,K) \subseteq \H^1(K,Y_s)$ is defined as 
\[
\Sel^{(s)}_{\wp}(f,K):= \{x \in \H^1(K,Y_s): x_v \in \H^1_{\mathrm{ur}}(K_v,Y_s) \text{ for all } v \nmid Np \text{ and } x_v \in \H^1_{\texttt f}(K_v,Y_s) \text{ for } v \mid p\}.
\]
\end{definition}

If $v$ is a place of $K$ not dividing $N$, then $\mathcal A$ has good reduction at $v$, and therefore we infer from \cite[Lemma 4.1]{nekovar1992kolyvagin} that $\H^1(K_v,V)$ consists only of unramified classes. Hence from the very definition of $\Sel^{(s)}_{\wp}(f,K)$ we see that the global Abel--Jacobi map from \eqref{AJpKugaSato} factors through
\begin{equation}\label{AJSel}
\mathrm{CH}^{r+1}(\mathcal A^r/K)_0 \otimes \mathcal O_{\wp}/p^s\mathcal O_{\wp} \, \, \longrightarrow \, \, \Sel^{(s)}_{\wp}(f,K).
\end{equation}

On the other hand, given arbitrary classes $x, y \in \mathrm H^1(K,Y_s)$ the reciprocity law asserts that 
\[
\sum_v \langle x_v, y_v\rangle_{v,s} = 0 \quad \text{in} \, \, \ZZ/p^s\ZZ,
\]
where the sum is over all the places in $K$. This is actually a finite sum, since $\langle x_v, y_v\rangle_{v,s}$ vanishes for every place $v$ for which both $x$ and $y$ are unramified. If $n$ is a product of Kolyvagin primes, then the cohomology classes $\kappa_s(n)$ are unramified at all places not dividing $pn$, possibly after multiplying by $p^{s_2}$, and we also know that $\kappa_s(n)_v \in \H^1_{\mathrm{\texttt f}}(K_v,Y_s)$ for every place $v$ of $K$ above $p$. But the finite part $\H^1_{\mathrm{\texttt f}}(K_v,Y_s)$ is isotropic inside $\H^1(K_v,Y_s)$ at all places $v$ dividing $p$ (see \cite[Prop. 3.8]{BK-Lfunctions}), hence the above reciprocity law implies that 
\begin{equation}\label{eq:RLSel}
p^{s_2}\sum_{\ell \mid n} \langle x_{\lambda}, \kappa_s(n)_{\lambda}\rangle_{\lambda,s} = 0 \quad \text{in } \ZZ/p^s\ZZ
\end{equation}
for every $x \in \Sel^{(s)}_{\wp}(f,K)$, where for each Kolyvagin prime $\ell \mid n$ in the sum $\lambda$ denotes the unique prime of $K$ above $\ell$.

Finally, we denote
\begin{equation} \label{SelInfinity}
\Sel^{(\infty)}_{\wp}(f,K):= \varprojlim \Sel^{(s)}_{\wp}(f,K).
\end{equation}
By considering the inductive limit of the Abel--Jacobi maps \eqref{AJSel} one obtains a map 
\[
\Phi: \mathrm{CH}^{r+1}(\mathcal A^r/K)_0 \otimes \mathcal O_{\wp} \, \, \longrightarrow \, \, \Sel^{(\infty)}_{\wp}(f,K) \subseteq \H^1(K,V_{\wp}).
\]
Its cokernel is by definiton the $\wp$-primary part of the Shafarevich--Tate group, 
\[
\Sh_{\wp^{\infty}} := \mathrm{coker}(\Phi) = \Sel^{(\infty)}_{\wp}(f,K)/\mathrm{Im}(\Phi).
\]

\section{Main result}

Recall our initial setting, in which $f_{\infty} \in S_{2r+2}^{\mathrm{new}}(\Gamma_0(N))$ is assumed to be a newform of weight $2r+2\geq 4$ and level $\Gamma_0(N)$, and let $p$ be an odd prime not dividing $N\cdot(2r)!$. We write $F$ for the number field generated by the Fourier coefficients of $f_{\infty}$, $\mathcal O_F$ for its ring of integers, and fix a prime $\wp$ of $F$ above $p$. Let $N=N^+N^-$ be a factorization such that $\gcd(N^+,N^-)=1$ and $N^->1$ is the square-free product of an even number of primes, and let $K$ be an imaginary quadratic field satisfying the Heegner hypothesis spelled out in (Heeg).

The Galois representation associated to $f_{\infty}$ might be realized as a factor in the middle \'etale cohomology of the $r$-th Kuga--Sato variety $\mathcal A^r$ over the Shimura curve $X=X_{N^+,N^-}$, by using the Jacquet--Langlands correspondence to lift $f_{\infty}$ to a modular form $f$ on $X$ and following previous work of Besser and Iovita--Spiess (cf. Section \ref{sec:repns}). We have previously denoted this representation by $V = V(f) \simeq V(f_{\infty})$. It is a free $\mathcal O_F \otimes \ZZ_p$-module of rank $2$, and our choice of $\wp$ singles out a localization $V_{\wp}$.

We have seen that the Abel--Jacobi map induces a Hecke- and Galois-equivariant map 
\[
\Phi: \CH^{r+1}(\mathcal A^r/K)_0 \otimes \mathcal O_{\wp} \, \, \longrightarrow \, \, \Sel^{(\infty)}_{\wp}(f,K) \subseteq \H^1(K,V_{\wp}),
\]
by localizing at $\wp$ and projecting on the $f$-isotypical component. Defining 
\[
y_0 := \cor_{K_1/K}(y_{1,\wp}),
\]
the main result we prove in this note reads as follows:

\begin{theorem}\label{mainthm}
With the above notations, suppose $y_0$ is non-torsion. Then $\mathrm{Im}(\Phi)\otimes \QQ$ has rank $1$ and $\Sh_{\wp^{\infty}}$ is finite. More precisely, we have
\[
(\mathrm{Im}(\Phi)\otimes \QQ)^{\varepsilon} = 0 \quad \text{and} \quad (\mathrm{Im}(\Phi)\otimes \QQ)^{-\varepsilon} = F_{\wp}\cdot y_0.
\]
\end{theorem}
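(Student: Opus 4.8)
The plan is to follow Kolyvagin's method as developed by Nekov\'a\v r \cite{nekovar1992kolyvagin}, bounding the Selmer group by means of the Euler system $\{\kappa_s(n)\}$ constructed above and then extracting the statement about $\mathrm{Im}(\Phi)$ from that bound together with the hypothesis that $y_0$ is non-torsion. Two observations reduce the theorem to a Selmer bound. First, $y_0=\cor_{K_1/K}(y_{1,\wp})$ is the Abel--Jacobi image of the norm from $K_1$ to $K$ of the CM cycle $\mathcal Z_1$ --- using that $\cor_{K_1/K}\circ\Phi_{f,K_1}=\Phi_{f,K}\circ\mathrm{Nm}_{K_1/K}$ and that the projector $\varepsilon_{2r}$ kills the target of the cycle class map, so $\Phi$ extends to all of $\CH^{r+1}(\mathcal A^r/K)$ --- hence $y_0\in\mathrm{Im}(\Phi)\subseteq\Sel^{(\infty)}_{\wp}(f,K)$. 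Second, the complex-conjugation compatibility of the $y_n$ gives $\rho(y_0)=-\varepsilon\,y_0$, so $y_0\in\H^1(K,V_{\wp})^{-\varepsilon}$. Granting these, it is enough to show that $\Sel^{(\infty)}_{\wp}(f,K)\otimes\QQ$ has $F_{\wp}$-rank at most $1$, with $\varepsilon$-eigenspace torsion and $(-\varepsilon)$-eigenspace at most one-dimensional. Indeed, if $y_0$ is non-torsion then $F_{\wp}\cdot y_0\subseteq\mathrm{Im}(\Phi)\otimes\QQ\subseteq\Sel^{(\infty)}_{\wp}(f,K)\otimes\QQ$ forces all three to equal $F_{\wp}\cdot y_0$, which lies in the $(-\varepsilon)$-part, so both $\varepsilon$-parts vanish; and $\Sh_{\wp^{\infty}}=\Sel^{(\infty)}_{\wp}(f,K)/\mathrm{Im}(\Phi)$ is then a quotient of two finitely generated $\mathcal O_{\wp}$-modules of equal rank $1$, hence a finitely generated torsion $\mathcal O_{\wp}$-module, hence finite.

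For the Selmer bound I would follow \cite[\S\S6--12]{nekovar1992kolyvagin} closely. Since $V_{\wp}$ is a localization of $V\simeq V(f_\infty)$, Nekov\'a\v r's representation-theoretic input applies verbatim: the uniform exponent $p^{s_1}$ annihilating $Y_s^{G_{K_n}}$ that underlies Corollary \ref{s1kercoker}, the tame-duality data \eqref{def-alpha}--\eqref{eqn:pairings_zeta}, and the Chebotarev/linear-algebra lemmas producing Kolyvagin primes with prescribed local behaviour (which rely on $G_{\QQ}$ acting with sufficiently large image on $Y_s$, available since $p\nmid N\cdot(2r)!$). Fix $s\gg 0$ and let $M$ be the largest integer with $\kappa_s(1)\in p^M\H^1(K,Y_s)$; as $y_0$ is non-torsion, $M$ stabilizes and is bounded independently of $s$. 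Given a nonzero $c\in\Sel^{(s)}_{\wp}(f,K)$ in one $\rho$-eigenspace, choose by Chebotarev a Kolyvagin prime $\ell\in\Sigma_1(s')$ such that $(\ell+1\pm a_\ell)/p^{s'}$ are units and such that $\alpha_{\lambda,s}(c_\lambda)$ and $\alpha_{\lambda,s}(\kappa_s(1)_\lambda)$ --- which lie, respectively, in the $\varepsilon$- and $(-\varepsilon)$-eigenspaces of $\H^1_{\ur}(K_\lambda,Y_s)$ when $c\in\Sel^{(s)}_{\wp}(f,K)^{\varepsilon}$ --- generate submodules of maximal order allowed by the $p$-divisibilities of $c$ and $\kappa_s(1)$. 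Apply the global reciprocity law $\sum_v\langle x_v,y_v\rangle_{v,s}=0$ with $y=\kappa_s(\ell)$ when $c$ lies in the $\varepsilon$-eigenspace (so that $x$ and $y$ lie in the same eigenspace, the pairing being $\rho$-equivariant) and with $y=\kappa_s(\ell_1\ell_2)$ for a second Kolyvagin prime $\ell_2$ when $c$ lies in the $(-\varepsilon)$-eigenspace: the terms with $v\nmid Nnp$ vanish (both classes unramified), those with $v\mid p$ vanish ($\H^1_{\texttt f}$ isotropic), those with $v\mid N$ are killed by $p^{s_2}$ (Proposition \ref{localizationkappas} iii), and at the primes $\lambda\mid\ell_i$ Proposition \ref{localizationkappas} iv) rewrites $\kappa_s(n)_\lambda$ through $\phi_{\lambda,s}(\kappa_s(n/\ell_i)_\lambda)$ up to a unit and a factor $p^{ds_1}$. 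Iterating down to $\kappa_s(1)$ and translating the surviving pairing via \eqref{eqn:pairings_zeta} and the non-degeneracy of $[\,,]_s$ (Proposition \ref{prop:V}) yields, with a constant $C$ depending only on $s_1,s_2,[F_{\wp}:\QQ_p]$, that $p^{M+C}$ annihilates $c$ when $c$ lies in the $\varepsilon$-eigenspace, and that $p^{M+C}c\in\mathcal O_{\wp}\cdot\kappa_s(1)+p^{s-C}\H^1(K,Y_s)$ when $c$ lies in the $(-\varepsilon)$-eigenspace. Passing to the inverse limit over $s$, $\Sel^{(\infty)}_{\wp}(f,K)^{\varepsilon}$ is finite and $\Sel^{(\infty)}_{\wp}(f,K)^{-\varepsilon}/\mathcal O_{\wp}\cdot y_0$ is finite, which is exactly the bound invoked above.

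The step I expect to be the main obstacle is the Chebotarev argument producing a single Kolyvagin prime that simultaneously detects the chosen Selmer class $c$ and the fixed class $\kappa_s(1)$ with only a bounded loss of $p$-divisibility, while meeting the unit conditions on $(\ell+1\pm a_\ell)/p^{s'}$; this rests on the image of $G_{\QQ}$ on $Y_s$ being large, which is \cite[Proposition 6.3]{nekovar1992kolyvagin} and holds because $p\nmid N\cdot(2r)!$ and $V_{\wp}$ is a localization of $V(f_\infty)$ --- and, intertwined with it, on the bookkeeping of the Tate twist in $V_{\wp}$ and of the $\rho$-eigenspaces, so that $[\,,]_s$ indeed pairs the eigenspace containing $\alpha_{\lambda,s}(c_\lambda)$ perfectly against the one containing $\alpha_{\lambda,s}(\kappa_s(1)_\lambda)$. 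Everything else is a formal consequence of the Euler system relations (Propositions \ref{gnc} and \ref{localizationkappas}) and the reciprocity law, exactly as in the weight-two case and in Nekov\'a\v r's treatment of the classical higher-weight situation.
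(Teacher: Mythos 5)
Your proposal is correct and follows essentially the same route as the paper: both reduce the theorem to Nekov\'a\v{r}'s Kolyvagin-type bound for the Selmer group, using a single Kolyvagin prime (produced by a Chebotarev argument over the field trivializing $Y_{s'}$) together with the reciprocity law, tame duality and Proposition \ref{localizationkappas} to kill the $\varepsilon$-eigenspace, a second prime and the class $\kappa_s(\ell\ell')$ to bound the $(-\varepsilon)$-eigenspace modulo $\mathcal{O}_{\wp}\cdot\kappa_s(1)$, and then the non-torsion hypothesis on $y_0$ to pass to the limit in $s$ and deduce the statements about $\mathrm{Im}(\Phi)$ and the finiteness of $\Sh_{\wp^{\infty}}$. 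The differences are purely organizational (you bound each Selmer class separately, while the paper fixes homomorphisms $\psi_{\pm}$ of maximal exponent on $T^{\pm}=\res(\Sel^{(s)}_{\wp}(f,K))^{\pm}$ and chooses primes adapted to them), so no further comment is needed.
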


As already commented, the proof of this result follows Kolyvagin's method as generalized by Nekov\'a\v r in \cite{nekovar1992kolyvagin}. Indeed, once we have constructed the Euler system of CM cycles on the Kuga--Sato variety $\mathcal A^r$ and have proved the compatibility properties that the associated system of Kolyvagin cohomology classes satisfies, the proof is formally the same. In spite of this, we summarize below the argument for the convenience of the reader.

Before entering into the proof, we shall make some global observations that complement our local discussions in the previous section. Keep the same notations as before, and write $L = K(Y_{s'})$ for the Galois extension of $K$ trivializing $Y_{s'}$, $s' = s + s_1$. Let also $\zeta_{s'} \in \mu_{p^{s'}}(L)$ be a primitive $p^{s'}$-th root of unity. For each Kolyvagin prime $\ell\in \Sigma_1(s')$, we might choose a place $\lambda_L$ of $L$ above $\ell$ such that $\zeta_{s'}$ maps to $\zeta_{\lambda,s'}$ under the embedding $L \hookrightarrow L_{\lambda_L} = K_{\lambda}$. Then we put $\zeta_s := (\zeta_{s'})^{p^{s_1}}$. Under this choice of $\lambda_L$, we identify $Y_s(K_{\lambda}) \simeq Y_s(L_{\lambda_L}) = Y_s(L)$. Further, we consider the maps 
\[
\alpha_{\lambda_L,s}: \H^1_{\ur}(L_{\lambda_L},Y_s) \stackrel{\simeq}{\longrightarrow} Y_s(L_{\lambda_L}), \quad
\phi_{\lambda_L,s}: \H^1_{\ur}(L_{\lambda_L},Y_s) \stackrel{\simeq}{\longrightarrow} \H^1(L_{\lambda_L}^{\ur},Y_s)
\]
analogous to the maps $\alpha_{\lambda,s}$ and $\phi_{\lambda,s}$ introduced in \eqref{def-alpha} and \eqref{def-phi-ur}, respectively, for $L_{\lambda_L}$. And by a slight abuse of notation, we also write $\alpha_{\lambda_L,s}$ for the composition
\begin{equation}\label{def-alpha-global}
\H^1(L,Y_s) \, \, \longrightarrow \, \, \H^1(L_{\lambda_L},Y_s) \, \, \longrightarrow \, \, \H^1_{\ur}(L_{\lambda_L},Y_s) \, \, \stackrel{\alpha_{\lambda_L,s}}{\longrightarrow} Y_s(L_{\lambda}) = Y_s(L),
\end{equation}
where the first arrow is localization at $\lambda_L$ and the second one is projection on the unramified part. The composition of these maps is the evaluation at $\Frob(\lambda_L)$.

Consider the restriction map
\[
\res = \res_{K,L}: \H^1(K,Y_s) \, \, \longrightarrow \, \, \H^1(L,Y_s)^{\Gal(L/K)} = \Hom_{\Gal(L/K)}(\Gal(\bar{\QQ}/L),Y_s(L)).
\]
The formula \eqref{eqn:pairings_zeta} relating the pairings $\langle , \rangle_{\lambda,s}$ and $[,]_s$ through the root of unity $\zeta_s$ admits the following global version:

\begin{proposition}\label{prop:pairingsglobal}
Given classes $\eta, \theta \in \H^1(K,Y_s)$ such that $\eta_{\lambda}, \theta_{\lambda} \in \H^1_{\ur}(K_{\lambda},Y_s)$,
\[
\zeta_s^{\langle \eta ,\phi_{\lambda_L,s}(\theta) \rangle_{\lambda,s}} = [ \alpha_{\lambda_L,s}(\res(\eta)), \alpha_{\lambda_L,s}(\res(\theta)) ]_s.
\]
\end{proposition}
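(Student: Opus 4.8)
The plan is to reduce this global identity to its purely local counterpart \eqref{eqn:pairings_zeta}, exploiting that the auxiliary place $\lambda_L$ has been chosen so that $L_{\lambda_L}=K_\lambda$. This local splitting is in fact automatic: for a Kolyvagin prime $\ell$ the eigenvalues of $\Frob(\ell)$ on $Y_{s'}$ are $\equiv\pm 1\pmod{p^{s'}}$ (by Proposition \ref{prop:V}(iii) and the congruences $a_\ell\equiv\ell+1\equiv 0\pmod{p^{s'}}$), and since $p$ is odd these are distinct modulo $p$, so $\Frob(\ell)$ is diagonalizable and $\Frob(\ell)^2\equiv 1\pmod{p^{s'}}$; hence $\lambda$ splits completely in $L/K$. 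In particular $K_\lambda^{\ur}=L_{\lambda_L}^{\ur}$, the arithmetic Frobenius at $\lambda_L$ over $L$ agrees with that at $\lambda$ over $K$, and the tame-duality data used over $L_{\lambda_L}$ (a tame generator together with the relevant root of unity) are literally those used over $K_\lambda$; consequently $\alpha_{\lambda_L,s},\beta_{\lambda_L,s},\phi_{\lambda_L,s}$ coincide with $\alpha_{\lambda,s},\beta_{\lambda,s},\phi_{\lambda,s}$ under the canonical identification $Y_s(L)=Y_s(L_{\lambda_L})=Y_s(K_\lambda)$. Finally, since $\lambda_L$ is chosen so that $\zeta_{s'}\mapsto\zeta_{\lambda,s'}$, raising to the $p^{s_1}$-th power gives $\zeta_s\mapsto\zeta_{\lambda,s}$.

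First I would observe that both sides of the asserted formula depend only on the localizations $\eta_\lambda,\theta_\lambda\in\H^1_{\ur}(K_\lambda,Y_s)$. For the left-hand side this is built into the definition of $\langle\,,\,\rangle_{\lambda,s}$ together with the identification of $\phi_{\lambda_L,s}$ applied to the localization at $\lambda_L$ of $\res(\theta)$ (which is unramified by hypothesis) with $\phi_{\lambda,s}(\theta_\lambda)$. For the right-hand side, I would use that localizing a class in $\H^1(K,Y_s)$ at $\lambda_L$ amounts to first restricting to $L$ and then localizing (precisely because $L_{\lambda_L}=K_\lambda$), so that the composite $\alpha_{\lambda_L,s}\circ\res$ of \eqref{def-alpha-global} is evaluation at $\Frob(\lambda_L)=\Frob(\lambda)$ of the underlying unramified cocycle; thus $\alpha_{\lambda_L,s}(\res(\eta))=\alpha_{\lambda,s}(\eta_\lambda)$ and $\alpha_{\lambda_L,s}(\res(\theta))=\alpha_{\lambda,s}(\theta_\lambda)$.

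Granting these compatibilities, the proposition follows by substitution into the level-$s$ form of \eqref{eqn:pairings_zeta}, namely
\[
\zeta_{\lambda,s}^{\langle\eta_\lambda,\phi_{\lambda,s}(\theta_\lambda)\rangle_{\lambda,s}}=[\alpha_{\lambda,s}(\eta_\lambda),\alpha_{\lambda,s}(\theta_\lambda)]_s,
\]
which holds by the very same tame-duality construction carried out at level $s$ (this is the form already invoked in Corollary \ref{cor:pairing}). Indeed, by the identifications above the left-hand side equals $\zeta_s^{\langle\eta,\phi_{\lambda_L,s}(\theta)\rangle_{\lambda,s}}$ and the right-hand side equals $[\alpha_{\lambda_L,s}(\res(\eta)),\alpha_{\lambda_L,s}(\res(\theta))]_s$, which is exactly the claim.

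I expect the only genuine work here to be bookkeeping: tracking the canonical identifications $Y_s(K_\lambda)=Y_s(L)$ and $\H^1_{\ur}(K_\lambda,Y_s)\simeq\H^1_{\ur}(L_{\lambda_L},Y_s)$ throughout, and checking that the auxiliary choices entering the local tame-duality isomorphisms over $K_\lambda$ and over $L_{\lambda_L}$ are mutually compatible and match the global choices of $\lambda_L$ and $\zeta_{s'}$. Once these are recorded, the statement carries no new arithmetic content; it is a formal consequence of the local relation \eqref{eqn:pairings_zeta}.
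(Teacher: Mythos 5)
Your reduction to the local formula \eqref{eqn:pairings_zeta} is correct and is precisely the argument the paper leaves implicit: the choices of $\lambda_L$ (with $L_{\lambda_L}=K_{\lambda}$, which you justify by noting the Kolyvagin condition forces $\Frob(\ell)^2$ to act trivially on $Y_{s'}$, so $\lambda$ splits completely in $L/K$) and of $\zeta_{s'}\mapsto \zeta_{\lambda,s'}$ make $\alpha_{\lambda_L,s}\circ\res$ agree with $\alpha_{\lambda,s}$ on the localizations, after which the asserted identity is just the level-$s$ instance of \eqref{eqn:pairings_zeta}. The paper states the proposition without proof as the ``global version'' of that local relation, so your write-up follows, and usefully fills in, its intended argument.
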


Now we are finally in position to prove Theorem \ref{mainthm}.

\begin{proof}[Proof of Theorem \ref{mainthm}]
We keep the notations as above. In particular, $s\gg 0$ is a sufficiently large integer, and $s' = s+ s_1$. The Selmer group $\Sel^{(\infty)}_{\wp}(f,K) = \varprojlim \Sel^{(s)}_{\wp}(f,K)$ is finitely generated as a $\ZZ_p$-module, and our goal is to bound it. We continue to denote $L=K(Y_{s'})$. 

Let $T = \res(\Sel^{(s)}_{\wp}(f,K)) \subseteq \H^1(L,Y_s)$ be the image of the $p^s$-th Selmer group under the restriction map, and write also $u_s(n) := \res(\kappa_s(n)) \in \H^1(L,Y_s)$ for the image of the $n$-th Kolyvagin cohomology class under restriction. The action of $\rho$ on $T$ defines two eigenspaces, $T^{\pm}$, and we will obtain our bound for $T$ (and hence eventually for $\Sel^{(\infty)}_{\wp}(f,K)$) by looking separately at $T^{\varepsilon}$ and $T^{-\varepsilon}$.

Let $L_T \subseteq L^{ab}$ denote the subfield fixed by the annihilator of $T$ under the evaluation pairing
$T \times \Gal(L^{ab}/L) \to Y_s(L)$, and put $G_T := \Gal(L_T/L)$. Then one has an induced $\Gal(L/\QQ)$-equivariant pairing 
\[
T \times G_T \, \, \longrightarrow \, \, Y_s(L), \quad (t,g) \, \, \longmapsto \, \, t(g),
\]
with the action of $\Gal(L/\QQ)$ on $T$ factoring through $\Gal(K/\QQ)$. In particular, this naturally induces a $\Gal(L/\QQ)$-equivariant map $G_T \, \, \hookrightarrow \, \, \Hom(T,Y_s(L))$ and a $\rho$-equivariant map $T \hookrightarrow \Hom_{\Gal(L/K)}(G_T,Y_s(L))$, both of them injective.

As in \cite[Section 12]{nekovar1992kolyvagin} (specifically Proposition 12.2 therein), there exist integers $a, b \geq 0$ such that, for all $s$ large enough, the following assertions hold:
\begin{itemize}
\item[(i)] $p^a\H^1(K(Y_{s'})/K,Y_s) = 0$;
\item[(ii)] $L_T \cap K(Y_{\infty}) \subseteq K(Y_{s'+a})$;
\item[(iii)] for each $g\in G_T^+$, there are infinitely many primes $\ell$ which are inert in $K$ and such that 
$\Frob_{L_T/K}(\lambda) = g$, $p^{s'}\mid \ell+1\pm a_{\ell}$ and $p^{s'+a+1} \nmid \ell+1\pm a_{\ell}$;
\item[(iv)] $p^b\mathrm{coker}(j: G_T \hookrightarrow \Hom(T,Y_s)) = 0$.
\end{itemize}

If $x$ is an element in an abelian group $A$, let $\exp(x)$ be the smallest $m\geq 0$ such that $p^mx = 0$. In the same fashion, $\exp(A)$ denotes the smallest $m\geq 0$ with $p^mA=0$. For instance, $\exp(\kappa_s(1)) = s - s_0$ and $\exp(u_s(n)) \geq \exp(\kappa_s(n)) - a$.

Fix an element $\psi_{\varepsilon} \in \Hom(T^{\varepsilon},Y_s^{\varepsilon})$ of maximal exponent, i.e. such that 
\[
\exp(\psi_{\varepsilon}) = \exp(\Hom(T^{\varepsilon},Y_s^{\varepsilon})),
\]
and notice that this exponent also equals $\exp(T^{\varepsilon})$. Also choose $\psi_{-\varepsilon} \in \Hom(T^{-\varepsilon},Y_s^{-\varepsilon})$ such that 
\[
\exp(\psi_{-\varepsilon}(u_s(1))) = \exp(u_s(1)) \quad (\geq s-s_0-a).
\]

If $\ell$ is a Kolyvagin prime and $\lambda_L$ is a prime of $L$ above $\ell$, recall the map $\alpha_{\lambda_L,s}: \H^1(L,Y_s) \to Y_s(L)$ from \eqref{def-alpha-global}. Its restriction to each of the eigenspaces for the action of complex conjugation gives rise to maps $\alpha_{\lambda_L,s}^{\pm}: \H^1(L,Y_s)^{\pm} \to Y_s(L)^{\pm}$, and by a slight abuse of notation we still denote by $\alpha_{\lambda_L,s}^{\pm}$ the restrictions of these maps to $T^{\pm}$. The map $\alpha_{\lambda_L,s}$ corresponds to evaluation at $\Frob(\lambda_L)$, thus one can find a Kolyvagin prime $\ell$ such that $p^{s'}\mid \ell+1\pm a_{\ell}$, $p^{s'+a+1} \nmid \ell+1\pm a_{\ell}$ and $\alpha_{\lambda_L,s}^{\pm} = p^b \psi_{\pm}$.

Now let $t \in T^{\varepsilon}$ be arbitrary. By virtue of the reciprocity law,  
\begin{equation}\label{eq:RLoneprime}
\langle t_{\lambda}, p^{s_2}\kappa_s(\ell)_{\lambda}\rangle_{\lambda,s} = 0 \quad \text{in } \ZZ/p^s\ZZ,
\end{equation}
and the choice of $\ell$ together with Corollary \ref{cor:pairing} then imply that 
\[
[\alpha_{\lambda,s}(t_{\lambda}), p^{s_2+a+1}u_{\ell,\varepsilon_{\ell}}\alpha_{\lambda,s}(u_s(1)_{\lambda})]_{s} = 1.
\]
Now using that $t \in T^{\varepsilon}$, $u_s(1) \in T^{-\varepsilon}$, and the above relation between $\alpha_{\lambda_L,s}^{\pm}$ and $\psi_{\pm}$, it follows that
\[
[\psi_{\varepsilon}(t), p^{s_2+a+2b+1}u_{\ell,\varepsilon_{\ell}}\psi_{-\varepsilon}(u_s(1))]_{s} = 1.
\]
Since $[,]_s$ is non-degenerate, we infer that $p^{s_0+s_2+2a+2b+1}T^{\varepsilon} = 0$, and hence that 
\[
p^{s_0+s_2+3a+2b+1}(\Sel^{(s)}_{\wp}(f,K))^{\varepsilon} = 0.
\]

Next we look at the eigenspace $T^{-\varepsilon}$. As before, one can choose elements $\phi_{\pm} \in \Hom(T^{\pm},Y_s^{\pm})$ such that 
\[
\exp(\phi_{\varepsilon}(u_s(\ell))) = \exp(u_s(\ell))
\]
and
\[
\exp(\phi_{-\varepsilon} \text{ mod } \mathcal O_{\wp} \psi_{-\varepsilon}) = \exp(\Hom(T^{-\varepsilon},Y_s^{-\varepsilon})/\mathcal O_{\wp} \psi_{-\varepsilon}) = \exp(\ker(\psi_{-\varepsilon})).
\]
Notice that the choice of the prime $\ell$ implies that 
\begin{align*}
\exp(u_s(\ell)) & \geq \exp(\kappa_s(\ell)) - a \geq \exp(\kappa_s(\ell)_{\lambda}) - a \geq \exp(\kappa_s(1)_{\lambda}) - 2a \geq \\ 
& \geq \exp(p^b\psi_{-\varepsilon}(u_s(1))) -2a = \exp(u_s(1)) -2a-b \geq s-s_0-3a-b.
\end{align*}

As above, one can find a second Kolyvagin prime $\ell' \neq \ell$ such that $p^{s'}\mid \ell'+1\pm a_{\ell'}$, $p^{s'+a+1} \nmid \ell'+1\pm a_{\ell'}$ and $\alpha_{\lambda_L',s}^{\pm} = p^b \phi_{\pm}$. For $t \in \ker(\psi_{-\varepsilon}) \subseteq T^{-\varepsilon}$, the reciprocity law reads
\[
p^{s_2}\langle t_{\lambda}, \kappa_s(\ell\ell')_{\lambda}\rangle_{\lambda,s} + p^{s_2}\langle t_{\lambda'}, \kappa_s(\ell\ell')_{\lambda'}\rangle_{\lambda',s} = p^{s_2}\langle t_{\lambda'}, \kappa_s(\ell\ell')_{\lambda'}\rangle_{\lambda',s} = 0 \quad \text{in } \ZZ/p^s\ZZ,
\]
where the first term vanishes because of \eqref{eq:RLoneprime} and part iv) of Proposition \ref{localizationkappas}. This gets translated, thanks to Proposition \ref{prop:pairingsglobal} into the identity 
\[
[\phi_{-\varepsilon}(t), p^{2b+s_1+s_2+a+1}u_{\ell',\varepsilon_{\ell\ell'}}\phi_{\varepsilon}(u_s(\ell))]_s = 1.
\]
As a consequence, the kernel of $\psi_{-\varepsilon}: T^{-\varepsilon} \to Y_s^{-\varepsilon}$ is killed by $p^{s_0+s_1+s_2+4a+3b+1}$.

Finally, the assumption that $y_0$ is non-torsion in $\H^1(K,V_{\wp})$ implies the existence of an integer $s_0 \geq 0$ such that, modulo torsion, $y_0$ is divisible by $p^{s_0}$ in $\H^1(K,V_{\wp})$ but not by $p^{s_0+1}$.  For the class $u_s(1)$, this means that $u_s(1) = p^{s_0}x + t$ for some $x$, $t$ in the image of $\Phi$ with $p^{s_1}t = 0$, as the torsion part of $\H^1(K,V_{\wp})$ is killed by $p^{s_1}$. Thus for large enough $s$, the following relation holds
\[
\exp(\psi_{-\varepsilon}(x)) = \exp(x) \geq s - a.
\]
Besides, the map $\psi_{-\varepsilon}: T^{-\varepsilon} \to Y_s^{-\varepsilon}$ induces an exact sequence 
\[
0 \, \, \to \, \, \frac{\ker(\psi_{-\varepsilon})}{\ast} \, \, \to \, \, \frac{T^{-\varepsilon}}{\mathcal O_{\wp}t + \mathcal O_{\wp}x} \, \, \stackrel{\psi_{-\varepsilon}}{\to} \, \, \frac{Y_s^{-\varepsilon}}{\mathcal O_{\wp}\psi_{-\varepsilon}(t) + \mathcal O_{\wp}\psi_{-\varepsilon}(x)},
\]
in which the first term is killed by $p^{s_0+s_1+s_2+4a+3b+1}$ (because so is $\ker(\psi_{-\varepsilon})$), and the last term is killed by $p^a$. From this one concludes that
\[
(\Sel^{(s)}_{\wp}(f,K))^{-\varepsilon}/(\mathcal O_{\wp}t + \mathcal O_{\wp}x)
\]
is killed by $p^{s_0+s_1+s_2+6a+3b+1}$. As $s$ tends to $\infty$, one deduces that
\[
p^e \Sel^{(\infty)}_{\wp}(f,K)/((F_{\wp}/\mathcal O_{\wp})y_0) = 0
\]
for some $e$. Using that $\text{Im}(\Phi)$ is divisible in $\Sel^{(\infty)}_{\wp}(f,K)$, this identity proves our claim on $\text{Im}(\Phi)$, and shows that for a sufficiently large $s$, $(\Sh_{\wp^{\infty}})^{\varepsilon} = (\Sel^{(s)}_{\wp}(f,K))^{\varepsilon}$ and $(\Sel^{(s)}_{\wp}(f,K))^{-\varepsilon}/(\mathcal O_{\wp}t + \mathcal O_{\wp}x)$ surjects onto $(\Sh_{\wp^{\infty}})^{-\varepsilon}$. Hence the theorem is proved.
\end{proof}

\end{document}